\newtheorem{thm}{Theorem}
\newtheorem{lema}{Lemma}
\newtheorem{prop}[lema]{Proposition}
\theoremstyle{definition}
\newtheorem{Def}[lema]{Definition}
\newtheorem{definition}[lema]{Definition}
\newtheorem{Rem}[lema]{Remark}
\newcommand{\ydsnd}{{}^{\ku^{\Sn_n}}_{\ku^{\Sn_n}}\mathcal{YD}}
\newcommand{\trid}{\triangleright}
\newcommand{\link}{\sim_{\ba}}
\newcommand{\ba}{\mathbf{a}}
\newcommand{\bb}{\mathbf{b}}
\newcommand{\bt}{\mathbf{t}}
\newcommand{\fd}{finite dimensional}
\newcommand{\cA}{\mathcal{A}}
\newcommand{\cI}{\mathcal{I}}
\newcommand{\cJ}{\mathcal{J}}
\newcommand{\cO}{\mathcal{O}}
\newcommand{\Sn}{{\mathbb S}}
\newcommand{\sn}{{\mathbb S}_n}
\newcommand{\B}{{\mathbb B}}
\newcommand{\xij}[1]{x_{(#1)}}
\newcommand{\fij}[1]{f_{#1}}
\newcommand{\aij}[1]{a_{(#1)}}
\newcommand{\dij}[1]{\delta_{#1}}
\newcommand{\mij}[1]{m_{(#1)}}
\newcommand{\tij}[1]{t_{(#1)}}
\newcommand{\mdos}[2]{m_{(#1)(#2)}}
\newcommand{\mtres}[3]{m_{(#1)(#2)(#3)}}
\newcommand{\mcuatro}{m_{\textsf{top}}}
\renewcommand{\_}[1]{_{\left( #1 \right)}}
\newcommand{\ot}{{\otimes}}
\newcommand\gA{\mathfrak{A}}
\newcommand{\X}{{\mathbb X}}
\newcommand{\ku}{\Bbbk}
\newcommand{\Z}{{\mathbb Z}}
\newcommand{\C}{{\mathcal C}}
\newcommand{\toba}{{\mathcal B}}
\newcommand{\Oc}{{\mathcal O}}
\newcommand{\End}{\operatorname{End}}
\newcommand{\Aut}{\operatorname{Aut}}
\newcommand{\Ext}{\operatorname{Ext}}
\newcommand{\Ind}{\operatorname{Ind}}
\newcommand{\Res}{\operatorname{Res}}
\newcommand\sgn{\operatorname{sgn}}
\newcommand\ad{\operatorname{ad}}
\newcommand\Hom{\operatorname{Hom}}
\newcommand\id{\operatorname{id}}
\def\pf{\begin{proof}}
\def\epf{\end{proof}}
\newcommand\Alg{\operatorname{Alg}}
\newcommand\cop{{\operatorname{cop}}}
\newcommand\gr{{\operatorname{gr}}}
\renewcommand\o{\otimes}
\newcommand\w{\widetilde}
\begin{document}


\title[Representations of Hopf algebras of dimension 72]{On a family of Hopf algebras of dimension 72}
\author[andruskiewitsch and vay]
{Nicol\'as Andruskiewitsch and Cristian Vay}

\address{FaMAF-CIEM (CONICET), Universidad Nacional de C\'ordoba,
Medina A\-llen\-de s/n, Ciudad Universitaria, 5000 C\' ordoba, Rep\'
ublica Argentina.} \email{andrus@famaf.unc.edu.ar,
vay@famaf.unc.edu.ar}

\thanks{\noindent 2000 \emph{Mathematics Subject Classification.}
16W30. \newline This work was partially supported by ANPCyT-Foncyt, CONICET, Ministerio de Ciencia y
Tecnolog\'{\i}a (C\'ordoba)  and Secyt (UNC)}

\begin{abstract}
We investigate a family of Hopf algebras of dimension 72
whose coradical is isomorphic to the algebra of functions on $\Sn_3$.
We determine the lattice of submodules of the so-called Verma modules and as a consequence we classify all simple modules.
We show that these Hopf algebras are unimodular (as well as their duals) but not quasitriangular; also, they are cocycle deformations of each other.
\end{abstract}

\maketitle

\setcounter{tocdepth}{1}


\section*{Introduction}
The study of \fd{} Hopf algebras over an algebraically closed field $\ku$ of characteristic 0
is split into two different classes: the class of semisimple Hopf algebras and the rest.
The Lifting Method from \cite{AS-cambr} is designed to deal with non-semisimple Hopf algebras whose coradical is a Hopf
subalgebra\footnote{An adaptation to general non-semisimple Hopf algebras was recently proposed in \cite{AC}.}.
Pointed Hopf algebras, that is Hopf algebras whose coradical is a group algebra, were intensively studied by this Method.
It is natural to consider next the class of Hopf algebras whose coradical is the algebra $\ku^G$ of functions on a non-abelian group $G$. This class seems to
be interesting at least by the following reasons:

\bigbreak
$\bullet$ The categories of Yetter-Drinfeld modules over the group algebra $\ku G$ and $\ku^G$, $G$ a finite group, are equivalent. Thence, a lot
  sensible information needed for the Lifting Method (description of Yetter-Drinfeld modules, determination of \fd{} Nichols algebras)
  can be translated from the pointed case to this case --or vice versa.

\bigbreak
$\bullet$ The representation theory of Hopf algebras whose coradical is the algebra of functions on a non-abelian group looks easier that the
  the representation theory of pointed Hopf algebras with non-abelian group, because the representation theory of $\ku^G$
  is easier than that of $G$. Indeed, $\ku^G$ is a semisimple abelian algebra and we may
  try to imitate the rich methods in representation theory of Lie algebras, with $\ku^G$ playing the role of the Cartan subalgebra.
  We believe that the representation theory of Hopf algebras with coradical $\ku^G$ might be helpful to study Nichols algebras and deformations.

We have started the consideration of this class in \cite{AV}, where \fd{} Hopf algebras whose coradical is $\ku^{\Sn_3}$
were classified and, in particular, a new family of Hopf algebras of dimension 72 was defined. The purpose of the present paper is to study these Hopf algebras. We first discuss in Section \ref{sect:preliminaries} some general ideas about modules induced from simple $\ku^{G}$- modules,
that we call Verma modules.
We introduce in Section \ref{sect:gral-sn} a new family of Hopf algebras, as a generalization of the construction in \cite{AV},
attached to the class of transpositions in $\Sn_n$ and depending on a parameter $\ba$.

Our main contributions are in Section \ref{sect:modules}:
we determine the lattice of submodules of the various Verma modules and as a consequence we classify all simple modules over
the Hopf algebras of dimension 72 introduced in \cite{AV}. Some further information on these Hopf algebras is given in Section \ref{sec: tipo de rep} and Section \ref{sect:more-info}.

We assume that the reader has some familiarity with Yetter-Drinfeld modules and Nichols algebras $\toba(V)$; we refer to \cite{AS-cambr} for these matters.

\subsection*{Conventions}

\

If $V$ is a vector space, $T(V)$ is the tensor algebra of $V$. If $S$ is a subset of $V$, then we denote by $\langle S\rangle$ the vector subspace generated by $S$. If $A$ is an algebra and $S$ is a subset of $A$, then we denote by $(S)$ the two-sided ideal generated by $S$
and by $\ku\langle S\rangle$ the subalgebra generated by $S$.
If $H$ is a Hopf algebra, then $\Delta$, $\epsilon$, $\mathcal{S}$
denote respectively the comultiplication, the counit and the antipode.
We denote by $\widehat R$ the set of isomorphism classes of a simple $R$-modules, $R$ an algebra; we identify a class in $\widehat{R}$
with a representative without further notice. If $S$, $T$ and $M$ are $R$-modules, we say that \emph{$M$ is an extension of $T$ by $S$} when $M$ fits into an exact sequence $0\rightarrow S\rightarrow M\rightarrow T\rightarrow 0$.

\section{Preliminaries}\label{sect:preliminaries}

\subsection{The induced representation}\label{subsect:induced-generalities}

\

We collect well-known facts about the induced representation. Let $B$ be a subalgebra of an algebra $A$ and let $V$ be a left $B$-module. The induced module is
$\Ind_{B}^{A} V = A \o_{B} V$. The induction has the following  properties:

\begin{itemize}
  \item Universal property:
if $W$ is an $A$-module and $\varphi: V \to W$ is morphism of $B$-modules,
then it extends to a morphism of $A$-modules $\overline{\varphi}: \Ind_{B}^{A} V \to W$.
Hence, there is a natural isomorphism (called Frobenius reciprocity):
$\Hom_{B} (V, \Res _B^A W) \simeq \Hom_{A} (\Ind_{B}^{A} V, W)$.
In categorical terms, \emph{induction is left-adjoint to restriction}.

\medbreak
\item Any \fd{} simple $A$-module is a quotient of the induced module
of a simple $B$-module.
\end{itemize}

Indeed, let $S$ be a \fd{} simple $A$-module and let $T$ be a simple $B$-submodule of $S$. Then the induced
morphism $\Ind_{B}^{A} T \to S$ is surjective.

\begin{itemize}
\item If $B$ is semisimple, then any induced module is projective.
\end{itemize}

The induction functor, being left adjoint to the restriction one, preserves projectives, and any module over a semisimple algebra is projective.

\begin{itemize}
\item If $A$ is a free right $B$-module, say $A\simeq B^{(I)}$, then $\Ind_{B}^{A} V = B^{(I)} \o_{B} V =  V^{(I)}$
as $B$-modules, and a fortiori as vector spaces.
\end{itemize}

\smallbreak
We summarize these basic properties in the setting of \fd{} Hopf algebras, where freeness over Hopf subalgebras is known \cite{NZ}. Also, \fd{} Hopf algebras are Frobenius, so that injective modules are projective and vice versa.

\begin{prop}\label{pr:induced}
Let $A$ be a \fd{} Hopf algebra and let $B$ be a semisimple Hopf subalgebra.
\begin{itemize}
  \item If $T\in \widehat B$, then $\dim \Ind_{B}^{A} T = \dfrac{\dim T\dim A}{\dim B}$.

\smallbreak  \item Any \fd{} simple $A$-module is a quotient of the induced module
of a simple $B$-module.

\smallbreak  \item The induced module of a \fd{} $B$-module is injective and projective.\qed
\end{itemize}
\end{prop}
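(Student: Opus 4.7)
The plan is to assemble each bullet from the general facts already collected above the statement, invoking two black-box results for finite dimensional Hopf algebras: the Nichols--Zoeller freeness theorem and the Frobenius property.

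For the dimension formula, I would apply Nichols--Zoeller to obtain $A \simeq B^{(I)}$ as right $B$-modules, where comparing dimensions forces $|I| = \dim A / \dim B$. The fourth general fact collected above then gives $\Ind_B^A T = A \o_B T \simeq T^{(I)}$ as vector spaces, whence
\[
\dim \Ind_B^A T \,=\, |I|\,\dim T \,=\, \frac{\dim T\,\dim A}{\dim B}.
\]

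For the second bullet, the argument was already sketched in the preceding discussion: if $S$ is a finite dimensional simple $A$-module and $T \subseteq \Res_B^A S$ is a simple $B$-submodule (which exists since $\Res_B^A S$ is finite dimensional), then the inclusion $T \hookrightarrow \Res_B^A S$ corresponds under Frobenius reciprocity to a non-zero $A$-linear map $\Ind_B^A T \to S$, which must be surjective since $S$ is simple.

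For the third bullet, since $B$ is semisimple, every finite dimensional $B$-module $V$ is projective (equivalently, injective) over $B$. Induction is left adjoint to restriction, and restriction is exact (it is essentially the identity on underlying vector spaces), so induction preserves projectives; hence $\Ind_B^A V$ is a projective $A$-module. Because $A$ is a finite dimensional Hopf algebra, it is Frobenius, so projective and injective $A$-modules coincide, yielding injectivity as well. The content of the proof lies not in any calculation but in the correct citation of these two black-box results: Nichols--Zoeller is essential in the first bullet (without freeness, $A \o_B T$ has no clean dimension count), while the Frobenius property is essential in the third to pass from projective to injective.
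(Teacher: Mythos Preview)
Your proposal is correct and matches the paper's approach exactly: the paper marks the proposition with a \qed and indicates, in the sentence just before it, that the proof consists of combining the previously listed general facts with the Nichols--Zoeller freeness theorem and the Frobenius property of finite dimensional Hopf algebras. Your write-up simply spells out those two citations in the right places, which is all that is needed.
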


\subsection{Representation theory of Hopf algebras with coradical a dual group algebra}\label{subsect:U-modules}

\

An optimal situation to apply the Proposition \ref{pr:induced} is when the coradical of the \fd{} Hopf algebra $A$ is a Hopf subalgebra;
in this case $B =$ coradical of $A$ is the best choice. It is tempting to say that the induced module
of a simple $B$-module is a \emph{Verma module} of $A$.

Assume now the coradical $B$ of the \fd{} Hopf algebra $A$ is the algebra of functions $\ku^{G}$ on a finite group $G$.
In this case, we have:

\medbreak
$\bullet$  Any simple $B$-module has dimension 1 and $\widehat{B} \simeq G$; for $g\in G$, the simple module $\ku_g$ has the action $f\cdot 1 = f(g) 1$,
  $f\in\ku^{G}$. Thus any simple $A$-module is a quotient of a  Verma module $M_{g} := \Ind_{\ku^{G}}\ku_g$, for some $g\in G$.

\medbreak
$\bullet$  The ideal $A\delta_g$ is isomorphic to $M_g$ and $A \simeq \oplus_{g\in G} M_g$;
here $\delta_g$ is the characteristic function of the subset $\{g\}$.

\medbreak
$\bullet$\label{bullet:injective-hull}
Let $g\in G$ such that $\delta_g$ is a primitive idempotent of $A$. Since $A$ is Frobenius,  $M_g\simeq A\delta_g$ has a unique simple submodule $S$ and
a unique maximal submodule $N$; $M_g$ is the injective hull of $S$ and the projective cover of $M_g/N$. See \cite[(9.9)]{CR}.

\medbreak
$\bullet$  In all known cases, $\gr A \simeq \toba(V) \# \ku^{G}$, where $V$ belongs to a concrete and short list.
Hence, $\dim M_{g} = \dim \toba(V)$ for any $g\in G$. More than this, in all known cases we dispose of the following information:

\medbreak
\begin{itemize}\renewcommand{\labelitemi}{$\circ$}
  \item There exists a rack $X$ and a 2-cocycle $q\in Z^2(X, \ku^{\times})$
such that $V \simeq (\ku X, c^q)$ as braided vector spaces, see \cite{AG-adv} for details.

\medbreak
  \item\label{item:epimorphism}
  There exists an epimorphism of Hopf algebras $\phi:T(V)\#\ku^{G}\rightarrow A$, see \cite[Subsection 2.5]{AV} for details. Note that $\phi(f\cdot x)=\ad f(\phi(x))$ for all $f\in\ku^{G}$ and $x\in T(V)$.

\medbreak
  \item Let $\X$ be the set of words in $X$, identified with a basis of the tensor algebra $T(V)$. There exists $\B \subset \X$
  such that the classes of the monomials in $\B$ form a basis of $\toba(V)$. The corresponding classes in $A$ multiplied
  with the elements $\delta_g\in \ku^{G}$, $g\in G$, form a basis of $A$.

\medbreak
  \item If $x\in X$, then there exists $g_x\in G$ such that $\delta_h\cdot x =\delta_{h, g_x}x$ for all $h\in G$.
  We extend  this to have $g_x\in G$ for any $x\in \X$.

\medbreak
  \item If $x\in X$, then $x^2 = 0$ in $\toba(V)$ and there exists $f_x\in \ku^{G}$ such that $x^2 = f_x$ in $A$.

  \end{itemize}

Let $g\in G$. If $x\in \B$, then we denote by $m_x$ the class of $x$ in $M_g$. Hence $(m_x)_{x\in \B}$ is a basis of $M_g$.
We may describe the action of $A$ on this basis of $M_g$, at least when we know explicitly the relations of $A$ and the monomials
in $\B$. To start with, let $f\in \ku^{G}$ and $x\in \B$. Then
\begin{equation}\label{eq:action-of-coradical}
\begin{aligned}
f\cdot m_x &= \overline{fx\otimes 1}  = \overline{f\_{1}\cdot x f\_{2}\otimes  1} = \overline{f\_{1}\cdot x \otimes  f\_{2}\cdot  1}\\
& = f(g_xg) \, m_x.
\end{aligned}
\end{equation}

Let now $x= x_1 \dots x_t$ be a monomial in $\B$, with $x_1, \dots, x_t\in X$. Set $y = x_2\dots x_t$;
observe that $y$ need not be in $\B$. Then
\begin{equation}\label{eq:action-of-first-letter}
\begin{aligned}
x_1\cdot m_x &= \overline{x_1^2x_2\dots x_t\otimes 1}  = \overline{f_{x_1}y\otimes  1} = f_{x_1}(g_{y}g) \, \overline{y\otimes  1}.
\end{aligned}
\end{equation}

Let now $M$ be a finite dimensional $A$-module. It is convenient to consider the decomposition of $M$ in isotypic components as $\ku^{G}$-module:
$M = \oplus_{g\in G}M[g]$, where $M[g] = \delta_g\cdot M$. Note that
\begin{align}\label{eq:comp-isotyp}
x\cdot M[g] &= M[g_xg] & \text{for all } x\in \B,\, g&\in G.
\end{align}

For instance, \eqref{eq:action-of-coradical} says that the isotypic components of the Verma module $M_{g}$ are
$M_g[h] = \langle m_x: x\in \B, \, g_xg = h\rangle$.

\section{Hopf algebras related to the class of transpositions in the symmetric group}\label{sect:gral-sn}

\subsection{Quadratic Nichols algebras}\label{subsect:nichols-gral-sn}

\

Let $n\ge 3$; denote by
$\Oc_2^n$ the conjugacy class of $\mbox{\footnotesize (12)}$ in $\Sn_n$ and by $\sgn:C_{\Sn_n}\mbox{\footnotesize (12)} \rightarrow\ku$
the restriction of the sign representation of $\Sn_n$ to the centralizer of $\mbox{\footnotesize (12)}$.
Let $V_n = M(\mbox{\footnotesize (12)},\sgn) \in \ydsnd$\label{V_n};
$V_n$ has a basis $(\xij{ij})_{(ij)\in\Oc_{2}^n}$ such that the action $\cdot$ and the coaction $\delta$
are given by
\begin{align*}
&\delta_h\cdot\xij{ij} =\delta_{h,(ij)}\,\xij{ij}\quad \forall h\in \Sn_n &\mbox{and}&&
\delta(\xij{ij})=\sum_{h\in \Sn_n}\sgn(h)\delta_{h}\ot x_{h^{-1}(ij)h}.
\end{align*}

\bigbreak
Let $n=3, 4, 5$. By  \cite{milinskisch,grania}, we know that $\toba(V_n)$ is quadratic and
\fd; actually, the ideal $\cJ_n$ of relations of $\toba(V_n)$  is generated by
\begin{align}
\label{eq:rels-powers}\xij{ij}^2&,\\
\label{eq:rels-ijkl}R_{(ij)(kl)}&:=\xij{ij}\xij{kl}+\xij{kl}\xij{ij},
\\
\label{eq:rels-ijik}R_{(ij)(ik)}&:=\xij{ij}\xij{ik}+\xij{ik}\xij{jk}+\xij{jk}\xij{ij}
\end{align}
for $(ij),(kl),(ik)\in\Oc_2^n$ with $\#\{i,j,k,l\} = 4$.

For $n\geq 6$, we define the \emph{quadratic Nichols algebra} $\toba_n$ in the same way,
that is as the quotient of the tensor algebra $T(V_n)$ by the ideal generated by the quadratic
relations \eqref{eq:rels-powers}, \eqref{eq:rels-ijkl} and \eqref{eq:rels-ijik}
for $(ij),(kl),(ik)\in\Oc_2^n$ with $\#\{i,j,k,l\} = 4$.
It is however open whether:

\begin{itemize}
    \item $\toba(V_n)$ is quadratic, i. e. isomorphic to $\toba_n$;

    \item the dimension of $\toba(V_n)$ is finite;

    \item the dimension of $\toba_n$ is finite.
\end{itemize}

But we do know that the only
possible finite dimensional Nichols algebras\footnote{There is one exception when $n = 4$ that is \fd{} and two exceptions when $n=5$ and 6 that are not known.}
over $\Sn_n$ are related to the orbit of transpositions and a pair of characters
\cite[Th. 1.1]{AFGV}. Also, the  Nichols algebras related to these two characters are twist-equivalent \cite{ve}.

\subsection{The parameters}\label{subsect:parameters-gral-sn}

\

We consider the set of parameters
$$
\gA_n :=\Big\{ \ba=(\aij{ij})_{(ij)\in\Oc_2^n}\in\ku^{\Oc_2^n}: \sum_{(ij)\in\Oc_2^n}\aij{ij}=0\Big\}.
$$
The group $\Gamma_n := \ku^{\times}\times\Aut(\Sn_n)$ acts on  $\gA_n$ by
\begin{align}\label{equ:action}
(\mu,\theta)\triangleright\ba&=\mu(a_{\theta(ij)}), & \mu&\in \ku^{\times}, & \theta &\in\Aut(\Sn_n),& \ba & \in \gA_n.
\end{align}
Let $[\ba]\in \Gamma_n\backslash\gA_n$ be the class of $\ba$ under this action.
Let $\trid$ denote also the conjugation action of $\sn$ on itself, so that\footnote{It is well-known that
$\Sn_n$ identifies with the group of inner automorphisms and that this equals $\Aut \Sn_n$, except for $n=6$.} $\Sn_n < \{e\}\times \Aut(\Sn_n)<\Gamma_n$.
Let  $\Sn_{n}^{\ba}=\{g\in\Sn_n|g\triangleright\ba=\ba\}$ be the isotropy group  of $\ba$ under the action of $\sn$.

\medbreak We fix $\ba\in \gA_n$ and introduce
\begin{align}
\label{eq:fij-n}
f_{ij} &= \sum_{g\in\Sn_n}(\aij{ij} - a_{g^{-1}(ij)g})\delta_g \in \ku^{\Sn_n}, & (ij)\in\Oc_2^n.
\end{align}
Clearly,
\begin{align}\label{eq:fij-propiedad}
& &\fij{ij}(ts) &=\fij{ij}(s) & \forall &t\in C_{\Sn_n}\mbox{\footnotesize (ij)}, \quad s\in\Sn_n.
\end{align}

\begin{Def}\label{def:linked}
We say that $g$ and $h \in \Sn_n$ are $\ba$-\emph{linked}, denoted $g\link h$, if either $g = h$ or else there exist $(i_mj_m)$, \dots, $(i_{1}j_{1})\in\cO_2^n$ such that
\begin{itemize}
  \item $g = (i_mj_m)\cdots(i_{1}j_{1})h$,
  \item $\fij{i_sj_s}((i_sj_s)(i_{s-1}j_{s-1})\cdots(i_{1}j_{1})h)\neq0$ for all $1\leq s\leq m$.
\end{itemize}
\end{Def}

In particular, $\fij{i_1j_1}(h) \neq 0$ by \eqref{eq:fij-propiedad}.
We claim that $\link$ is an equivalence relation. For,
if $g$ and $h \in \Sn_n$ are $\ba$-linked, then  $h=(i_1j_1)\cdots(i_{m}j_{m})g$ and
\begin{align*}
\noalign{\smallbreak}
\fij{i_{s}j_{s}}((i_{s}j_{s})(i_{s+1}j_{s+1})\cdots(i_{m}j_{m})g)&=\fij{i_{s}j_{s}}((i_{s-1}j_{s-1})\cdots(i_{1}j_{1})h)\\
&\overset{\eqref{eq:fij-propiedad}}=\fij{i_{s}j_{s}}((i_{s}j_{s})(i_{s-1}j_{s-1})\cdots(i_{1}j_{1})h)\neq0.
\end{align*}
In the same way, we see that if $g\link h$ and also $h\link z$, then $g\link z$.

\subsection{A family of Hopf algebras}\label{subsect:family-gral-sn}

\

We fix $\ba\in \gA_n$; recall the elements $f_{ij}$ defined in \eqref{eq:fij-n}. Let $\cI_{\ba}$ be the ideal of $T(V_n)\#\ku^{\Sn_n}$ generated by \eqref{eq:rels-ijkl}, \eqref{eq:rels-ijik} and
\begin{align}
\label{eq:rels-powers Aa}
\xij{ij}^2& - f_{ij},
\end{align}
for all $(ij),(kl),(ik)\in\Oc_2^n$ such that $\#\{i,j,k,l\} = 4$.
Then  $$\cA_{[\ba]}:= T(V_n)\#\ku^{\Sn_n}/\cI_{\ba}$$ is a Hopf algebra,  see Remark \ref{obs:Aa-Hopf}. Also, if $\gr\cA_{[\ba]}\simeq\toba(V_n)\#\ku^{\Sn_n}\simeq\gr\cA_{[\bb]}$, then $\cA_{[\ba]} \simeq \cA_{[\bb]}$ if and only if $[\ba]=[\bb]$, what justifies the notation. If $n =3$, then $\gr\cA_{[\ba]}\simeq\toba(V_3)\#\ku^{\Sn_3}$ and $\dim \cA_{[\ba]} = 72$ \cite{AV}; for $n =4,5$ the dimension is finite but we do not know if it is the "right" one;  for $n \geq 6$, the dimension is unknown to be finite.

\begin{Rem}\label{obs:Aa-Hopf}
A straightforward computation shows that
\begin{align*}
\Delta(\xij{ij}^2)&=\xij{ij}^2\ot1+\sum_{h\in\Sn_n}\delta_{h}\ot x_{h^{-1}(ij)h}^2\quad\mbox{ and }\\
\Delta(\fij{ij})&=\fij{ij}\ot1+\sum_{h\in\Sn_n}\delta_{h}\ot f_{h^{-1}(i)h^{-1}(j)}.
\end{align*}
Then $J=\langle\xij{ij}^2 - f_{ij}:(ij)\in\cO_2^n\rangle$ is a coideal. Since $\fij{ij}(e)=0$, we have that $J\subset\ker\epsilon$ and $\mathcal{S}(J)\subseteq \ku^{\Sn_n}J$. Thus $\cI_{\ba} = (J)$ is a Hopf ideal and $\cA_{[\ba]}$ is a Hopf algebra quotient of $T(V_n)\#\ku^{\Sn_n}$.
We shall say that \emph{$\ku^{\Sn_n}$ is a subalgebra of $\cA_{[\ba]}$} to express that the restriction of the projection
$T(V_n)\#\ku^{\Sn_n} \twoheadrightarrow \cA_{[\ba]}$ to $\ku^{\Sn_n}$ is injective.
\end{Rem}

\bigbreak
Let us collect a few general facts on the representation theory of $\cA_{[\ba]}$.

\begin{Rem}\label{obs:fij}
Assume that $\ku^{\Sn_n}$ is a subalgebra of $\cA_{[\ba]}$ and let $M$ be an $\cA_{[\ba]}$-module. Hence
\renewcommand{\theenumi}{\alph{enumi}}   \renewcommand{\labelenumi}{(\theenumi)}
\begin{enumerate}
  \item\label{item:rem-fij-a} If $(ij)\in\Oc_2^n$ satisfies $\fij{ij}(h)\neq0$, then $\rho(\xij{ij}):M[h]\rightarrow M[\mbox{\footnotesize (ij)}h]$
is an isomorphism.

\smallbreak
\item\label{item:rem-fij-b} Let $g\link h \in \Sn_n$.
Then
$\rho(\xij{i_mj_m})\circ\cdots\circ\rho(\xij{i_{1}j_{1}}):M[h]\rightarrow M[g]$
is an isomorphism.
\end{enumerate}
\end{Rem}

\pf  $\rho(\xij{ij}):M[h]\rightarrow M[\mbox{\footnotesize (ij)}h]$ is injective
and $\rho(\xij{ij}):M[\mbox{\footnotesize (ij)}h]\rightarrow M[h]$ is surjective, by  \eqref{eq:rels-powers Aa}. Interchanging the roles of $h$
and $\mbox{\footnotesize (ij)}h$, we get \eqref{item:rem-fij-a}. Now \eqref{item:rem-fij-b} follows from \eqref{item:rem-fij-a}.
\epf

This Remark is particularly useful to compare Verma modules.

\begin{prop}\label{prop: g h linked then the Verma are isomorphic}
Assume that $\dim \cA_{[\ba]} < \infty$ and $\ku^{\Sn_n}$ is a subalgebra of $\cA_{[\ba]}$.
If $g$ and $h$ are $\ba$-linked, then the Verma modules $M_g$ and $M_h$ are isomorphic.
\end{prop}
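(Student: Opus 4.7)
The plan is to produce an explicit surjective $\cA_{[\ba]}$-linear map $\varphi\colon M_g \to M_h$ and then conclude it is an isomorphism by a dimension count: both $\dim M_g$ and $\dim M_h$ equal $\dim\cA_{[\ba]}/|\Sn_n|$ by Proposition \ref{pr:induced}, since the simple $\ku^{\Sn_n}$-modules $\ku_g$ and $\ku_h$ are one-dimensional. Thus it suffices to exhibit a surjection.

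To construct $\varphi$, I would use Frobenius reciprocity: an $\cA_{[\ba]}$-linear map $M_g \to M_h$ is the same thing as a $\ku^{\Sn_n}$-linear map $\ku_g \to M_h$, hence a vector in the weight space $M_h[g]$. Let $v_h:=\overline{1\ot 1}\in M_h$, the canonical cyclic generator; by \eqref{eq:action-of-coradical} it sits in $M_h[h]$ and is non-zero. Fix now a chain $(i_1j_1),\dots,(i_mj_m)$ of transpositions witnessing $g\link h$ and set
\[
w := \xij{i_mj_m}\cdots \xij{i_1j_1}\cdot v_h.
\]
Applying Remark \ref{obs:fij}(b) to $M_h$, $w$ is non-zero and lies in $M_h[g]$, so it extends to a non-zero $\cA_{[\ba]}$-linear map $\varphi\colon M_g\to M_h$ sending $v_g:=\overline{1\ot 1}\in M_g$ to $w$.

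For surjectivity I would descend along the same chain. Put $w_s := \xij{i_sj_s}\cdots \xij{i_1j_1}\cdot v_h$, so $w_0=v_h$, $w_m=\varphi(v_g)$, and $w_s\in M_h[(i_sj_s)\cdots(i_1j_1)h]$ by \eqref{eq:comp-isotyp}. Using the quadratic relation \eqref{eq:rels-powers Aa} together with \eqref{eq:action-of-coradical} and \eqref{eq:fij-propiedad} I get
\[
\xij{i_sj_s}\cdot w_s \;=\; f_{i_sj_s}\cdot w_{s-1} \;=\; f_{i_sj_s}\bigl((i_sj_s)(i_{s-1}j_{s-1})\cdots(i_1j_1)h\bigr)\,w_{s-1},
\]
and the scalar is non-zero precisely by the definition of $\ba$-linkage. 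Hence $w_{s-1}\in \cA_{[\ba]}\cdot w_s$ for each $s$, and descending from $s=m$ down to $s=1$ gives $v_h\in\cA_{[\ba]}\cdot\varphi(v_g)=\varphi(M_g)$. Since $v_h$ generates $M_h$, $\varphi$ is surjective, so an isomorphism.

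The only subtle point is keeping track at each step that the intermediate vector $w_s$ is non-zero and has the expected $\ku^{\Sn_n}$-weight, so that the identification of $f_{i_sj_s}\cdot w_{s-1}$ with an explicit scalar multiple of $w_{s-1}$ is legitimate and that scalar is non-zero; this is exactly what Remark \ref{obs:fij} and the definition of $\link$ (together with \eqref{eq:fij-propiedad}) guarantee, so no genuine obstacle is expected beyond careful bookkeeping.
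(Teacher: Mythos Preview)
Your proof is correct and follows essentially the same route as the paper's: produce a nonzero weight vector $w\in M_h[g]$ via the linking chain, use the universal property of induction to get $\varphi\colon M_g\to M_h$, show $v_h\in\cA_{[\ba]}\cdot w$ so that $\varphi$ is surjective, and conclude by equality of dimensions. The paper compresses your explicit descent (step~5) into a single citation of Remark~\ref{obs:fij}\eqref{item:rem-fij-b}, noting that the map $M_h[h]\to M_h[g]$ is an isomorphism of vector spaces realized by algebra elements, so the image of the generator $m_1$ still generates $M_h$; your version simply unpacks this.
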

\begin{proof}
The Verma module $M_h$ is generated by $m_1=1\ot_{\ku^{\Sn_n}}1\in M_{h}[h]$. By Remark \ref{obs:fij} \eqref{item:rem-fij-b}, there exists $m\in M_h[g]$ such that $M_h=\cA_{[\ba]}\cdot m$. Therefore, there is an epimorphism $M_g\twoheadrightarrow M_h$.
Since $\cA_{[\ba]}$ is finite dimensional, all the Verma modules have the same dimension; hence $M_g\simeq M_h$.
\end{proof}

\begin{Def}\label{def:generic}
We say that the parameter $\ba$ is \emph{generic} when any of the following equivalent conditions holds.

\renewcommand{\theenumi}{\alph{enumi}}   \renewcommand{\labelenumi}{(\theenumi)}
\begin{enumerate}
  \item\label{item:def-generic-a} $\aij{ij}\neq\aij{kl}$ for all $(ij) \neq (kl)\in\cO^n_2$.

\item\label{item:def-generic-b} $\aij{ij}\neq a_{h\trid (ij)}$ for all $(ij)\in\Oc_2^n$ and all $h\in \Sn_n -
C_{\Sn_n}\mbox{\footnotesize ($ij$)}$.

\item\label{item:def-generic-c} $\fij{ij}(h) \neq 0$  for all $(ij)\in\Oc_2^n$ and all $h\in \Sn_n - C_{\Sn_n}\mbox{\footnotesize ($ij$)}$.
\end{enumerate}
\end{Def}

\pf \eqref{item:def-generic-a} $\implies$ \eqref{item:def-generic-b} is clear, since $(ij) \neq h\trid (ij)$ by the assumption on $h$.
\eqref{item:def-generic-b} $\implies$ \eqref{item:def-generic-a} follows since any $(kl) \neq (ij)$ is of the form $(kl) = h\trid (ij)$,
for some $h\notin \sn^{(ij)}$. \eqref{item:def-generic-b} $\iff$ \eqref{item:def-generic-c}: given $(ij)$, we have
$$
\{h\in \Sn_n : \aij{ij} = a_{h\trid (ij)}\} = \{h\in \Sn_n : \fij{ij}(h) = 0\};
$$
hence, one of these sets equals $C_{\Sn_n}\mbox{\footnotesize ($ij$)}$ iff the other does.\epf

\begin{lema}\label{le:bounded in the dimension of modules for aij all different}
Assume that $\ba$ is generic, so that $g\link h$ for all $g,h\in\Sn_n -\{e\}$.
If $\ku^{\Sn_n}$ is a subalgebra of $\cA_{[\ba]}$, then
\renewcommand{\theenumi}{\alph{enumi}}   \renewcommand{\labelenumi}{(\theenumi)}
\begin{enumerate}
\item\label{item:lema-generic-b} If $\cA_{[\ba]}$ is \fd{}, then the Verma modules $M_g$ and $M_h$ are isomorphic,
for all $g,h\in\Sn_n -\{e\}$.

\smallbreak
 \item\label{item:lema-generic-a} If $M$ is an $\cA_{[\ba]}$-module,  then $\dim M[h]=\dim M[g]$ for all $g,h\in\Sn_n -\{e\}$. Thus $\dim M = (n!-1)\dim M[(ij)] + \dim M[e]$.

\smallbreak
\item\label{item:lema-generic-c} If $M$ is simple and $n=3$, then $\dim M[h]\leq1$ for all $h\in\Sn_3 - \{e\}$.

\end{enumerate}

\end{lema}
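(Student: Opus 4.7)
Parts (a) and (b) follow directly from the machinery already in place. Part (a) is an immediate instance of Proposition \ref{prop: g h linked then the Verma are isomorphic}, since the genericity hypothesis guarantees $g\link h$ for every $g,h\in\sn-\{e\}$. Part (b) uses Remark \ref{obs:fij}(\ref{item:rem-fij-b}) to obtain, for each linking chain from $h$ to $g$, an isomorphism of vector spaces $M[h]\to M[g]$; taking dimensions and then summing the $\ku^{\Sn_n}$-isotypic decomposition of $M$ yields the formula for $\dim M$.

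For part (c) the plan is to invoke Frobenius reciprocity, which for any $\cA_{[\ba]}$-module $N$ and any $h\in\Sn_3$ gives
\[
\Hom_{\cA_{[\ba]}}(M_h,N)\simeq N[h]
\]
as vector spaces. The first substantive step is to show $\dim M_h[h]=2$ for every $h\in \Sn_3$. Applying part (b) to the Verma module $M_e$ itself, and using the known graded dimensions $(1,3,4,3,1)$ of $\toba(V_3)$, the three odd-weight components of $M_e$ have combined dimension $6$, forcing $\dim M_e[(ij)]=2$; an analogous parity count on the even components gives $\dim M_e[e]=2$. The identity $\dim M_h[k]=\dim M_e[kh^{-1}]$ built into the Verma construction then yields $\dim M_h[h]=\dim M_e[e]=2$ for every $h$.

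The second step combines this with the Krull--Schmidt decomposition $M_h=\bigoplus_i P_i^{n_i}$ into indecomposable projectives, which is available because $\cA_{[\ba]}$ is finite-dimensional and $M_h$ is projective by Proposition \ref{pr:induced}. Setting $N=M_h$ in Frobenius reciprocity gives $\dim \End(M_h)=2$, and the estimate
\[
2=\dim \End(M_h)=\sum_{i,j}n_in_j\dim\Hom(P_i,P_j)\;\geq\; \sum_i n_i^2\dim \End(P_i)\;\geq\;\sum_i n_i^2
\]
forces $n_i\leq 1$ for every $i$. Consequently the indecomposable summands of $M_h$ are pairwise non-isomorphic, so their tops are pairwise distinct simple modules. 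For any simple $M$,
\[
\dim M[h]=\dim\Hom_{\cA_{[\ba]}}(M_h,M)=\sum_i n_i\bigl[\operatorname{top} P_i\simeq M\bigr]\;\leq\; 1,
\]
which is the desired bound; the argument in fact covers $h=e$ as well, though only $h\neq e$ is needed. The main obstacle is the explicit dimension count $\dim M_h[h]=2$, where the specific structure of $\toba(V_3)$ enters decisively; once this is in hand, the deduction via indecomposable projectives is a standard homological argument.
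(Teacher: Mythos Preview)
Your treatment of (a) and (b) matches the paper's: both defer to Proposition~\ref{prop: g h linked then the Verma are isomorphic} and Remark~\ref{obs:fij}. Note, however, that the paper spends most of its proof establishing the claim in the lemma's opening clause---that genericity forces $g\link h$ for all $g,h\in\Sn_n-\{e\}$---via a case analysis on cycle type (transpositions, then $r$-cycles by induction on $r$, then products of disjoint cycles by induction on the number of factors). You treat this as given; whether that is a gap depends on reading ``so that'' as an assertion to be justified (the paper's reading) or as part of the hypothesis.

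For (c) your argument is correct but takes a genuinely different route. The paper's proof is a single inequality: a simple module $M$ over the $72$-dimensional algebra $\cA_{[\ba]}$ satisfies $(\dim M)^2<72$, while part (b) gives $\dim M\geq 5\dim M[(12)]$, so $25(\dim M[(12)])^2<72$ forces $\dim M[(12)]\leq 1$. Your approach via Frobenius reciprocity, the computation $\dim M_h[h]=2$ from the Hilbert series $(1,3,4,3,1)$ of $\toba(V_3)$, and the Krull--Schmidt bound $\sum_i n_i^2\leq\dim\End(M_h)=2$ is more structural and yields extra information (each Verma module has at most two indecomposable summands, each with multiplicity one), but at the cost of a longer chain of reasoning and an explicit appeal to the graded structure of $\toba(V_3)$. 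The paper needs only the crude input $\dim\cA_{[\ba]}=72$.
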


\begin{proof}  Let $(ij)\in \sn$ and $g\in\Sn_n -\{e\}$.
\begin{itemize}
  \item If $g=(ik)$, then $g\link (ij)$, as $(ik)=(jk)(ij)(jk)$ and $\ba$ is generic.

 \smallbreak \item  If $g=(kl)$ with $\#\{i,j,l,k\}=4$, then $(ij)\link (ik)$ and $(ik)\link (kl)$, hence $(ij)\link (kl)$.

\smallbreak  \item  If $g=(i_1i_2\cdots i_r)$ is an $r$-cycle, then $g=(i_1i_r)(i_1i_2\cdots i_{r-1})$.
Hence  $g\link (ij)$ by induction on $r$.

\smallbreak  \item  Let $g = g_1 \cdots g_{m}$ be the product of the disjoint cycles $g_1, \dots, g_{m}$, with $m\geq 2$;
say $g_1=(i_1\cdots i_r)$, $g_2 = (i_{r+1}\cdots i_{r+s})$ and denote $y = g_3 \cdots g_{m}$.
Then $g=(i_1i_{r+1})(i_1\cdots i_{r+s})y$ and $y\in C_{\Sn_n}\mbox{\footnotesize ($i_1i_{r+1}$)}$. Hence
$g$ and $(ij)$ are linked by induction on $m$.
\end{itemize}

Now \eqref{item:lema-generic-b} follows from Proposition \ref{prop: g h linked then the Verma are isomorphic}
and \eqref{item:lema-generic-a} from Remark \ref{obs:fij}.
If $n =3$ and $M$ is simple, then $\dim\cA_{[\ba]}=72>(\dim M)^2\geq25(\dim M[\mbox{\footnotesize (12)}])^2$ and the last assertion of the lemma follows.
\end{proof}

\medbreak The characterization of all one dimensional $\cA_{[\ba]}$-modules is not difficult.
Let $\thickapprox$ be the equivalence relation  in  $\Oc_2^n$ given by $(ij)\thickapprox(kl)$ iff $\aij{ij}=\aij{kl}$.
Let $\Oc_2^n =\coprod_{s\in \Upsilon} \C_s$ be the associated partition.
If $h\in \Sn_n$, then
\begin{align}\label{eq:conditions-h}
\fij{ij}(h) &= 0 \,\forall (ij)\in\Oc_2^n & &\iff  & h^{-1}\C_sh &= \C_s  \,\forall s\in \Upsilon & &\iff  & h&\in\Sn_n^\ba.
\end{align}

\begin{lema}\label{prop:dim-uno}
Assume that $\ku^{\Sn_n}$ is a subalgebra of $\cA_{[\ba]}$ and let $h\in \Sn_n^\ba$. Then $\ku_h$ is a $\cA_{[\ba]}$-module with the action given by the algebra map $\zeta_h:\cA_{[\ba]}\rightarrow\ku$,
\begin{align}
&\zeta_h(\xij{ij})=0,& (ij)&\in\Oc_2^n &\mbox{ and }& &\zeta_h(f)=f(h),& &f\in\ku^{\Sn_n}.
\end{align}

The one-dimensional representations of $\cA_{[\ba]}$ are all of this form.
\end{lema}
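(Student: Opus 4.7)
The plan is to split the proof into two complementary halves: existence of the algebra map $\zeta_h$ for every $h\in\Sn_n^\ba$, and the converse that every one-dimensional $\cA_{[\ba]}$-module arises this way. Throughout I use only the defining relations \eqref{eq:rels-ijkl}, \eqref{eq:rels-ijik}, \eqref{eq:rels-powers Aa}, the smash-product cross-relations of $T(V_n)\#\ku^{\Sn_n}$, and the equivalence \eqref{eq:conditions-h}.

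For existence, I first define $\zeta_h$ on $T(V_n)\#\ku^{\Sn_n}$ by the stated formulas and then show that $\cI_{\ba}$ is annihilated, so that $\zeta_h$ factors through $\cA_{[\ba]}$. The quadratic relations \eqref{eq:rels-ijkl} and \eqref{eq:rels-ijik} are killed at once since every summand contains some $\xij{kl}$, and $\zeta_h(\xij{kl})=0$. For \eqref{eq:rels-powers Aa} I compute $\zeta_h(\xij{ij}^2-f_{ij})=0-f_{ij}(h)$, and this vanishes precisely because $h\in\Sn_n^\ba$ forces $f_{ij}(h)=0$ for all $(ij)\in\Oc_2^n$ by \eqref{eq:conditions-h}. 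The smash-product cross-relation $\delta_g\xij{ij}=\xij{ij}\delta_{(ij)g}$ is also preserved, since both sides are sent to $0$ by $\zeta_h$.

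For the converse, let $\rho\colon\cA_{[\ba]}\to\ku$ be any one-dimensional representation. Its restriction to $\ku^{\Sn_n}$ is an algebra map to $\ku$, and since $\ku^{\Sn_n}\cong\prod_{g\in\Sn_n}\ku$ with orthogonal idempotents $\delta_g$, this restriction is evaluation at a unique $h\in\Sn_n$: $\rho(f)=f(h)$. Applying $\rho$ to the cross-relation $\delta_h\xij{ij}=\xij{ij}\delta_{(ij)h}$ gives $\rho(\xij{ij})=\rho(\xij{ij})\cdot\delta_{(ij)h,h}$, and since $(ij)\neq e$ the Kronecker delta on the right is $0$, forcing $\rho(\xij{ij})=0$ for every transposition. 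Finally, applying $\rho$ to \eqref{eq:rels-powers Aa} gives $0=f_{ij}(h)$ for every $(ij)\in\Oc_2^n$, which by \eqref{eq:conditions-h} is equivalent to $h\in\Sn_n^\ba$. Thus $\rho=\zeta_h$ with $h$ in the isotropy subgroup, completing the classification.

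There is no serious obstacle; the only step that requires a moment's thought is extracting the vanishing $\rho(\xij{ij})=0$ from the smash-product commutation. Once this is in place, the two halves match through \eqref{eq:conditions-h} and yield the stated bijection between $\Sn_n^\ba$ and the one-dimensional representations of $\cA_{[\ba]}$.
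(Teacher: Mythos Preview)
Your proof is correct and follows essentially the same approach as the paper. The paper's argument is terser---for the converse it just notes that a one-dimensional module satisfies $M=M[h]$ for some $h$ and then invokes Remark~\ref{obs:fij} to conclude $f_{ij}(h)=0$---whereas you unpack this explicitly via the cross-relation $\delta_h\xij{ij}=\xij{ij}\delta_{(ij)h}$ to first force $\rho(\xij{ij})=0$ and then read off $f_{ij}(h)=0$ from \eqref{eq:rels-powers Aa}; but the underlying mechanism is identical.
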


\pf Clearly, $\zeta_h$ satisfies the relations of $T(V_n)\#\ku^{\Sn_n}$,  \eqref{eq:rels-ijkl} and \eqref{eq:rels-ijik};
\eqref{eq:rels-powers Aa} holds because $h$ fulfills \eqref{eq:conditions-h}. Now,
let $M$ be a module of dimension 1. Then $M = M[h]$ for some $h$; thus $\fij{ij}(h)=0$ for all $(ij)\in\Oc_2^n$ by Remark \ref{obs:fij}.
\epf

\section{Simple and Verma modules over Hopf algebras with coradical $\ku^{\Sn_3}$}\label{sect:modules}

\subsection{Verma modules}\label{subsect: cAa1a2}

\

In this Section, we focus on the case $n =3$. Let $\ba\in \gA_3$. Explicitly, $\cA_{[\ba]}$
is the algebra $(T(V_3)\#\ku^{\Sn_3})/\cI_{\ba}$ where $\cI_{\ba}$
is the ideal generated by
\begin{align}\label{eq:rels-ideal}
&R_{(13)(23)}, & &R_{(23)(13)}, & &\xij{ij}^2 - \fij{ij}, & &(ij)\in \Oc_2^3,
\end{align}
where
\begin{align}\label{eq:fij-3}
\begin{aligned}
\fij{13} &= (\aij{13} - \aij{23})(\delta_{(12)}+\delta_{(123)}) + (\aij{13} - \aij{12})(\delta_{(23)}+\delta_{(132)}),
\\\fij{23} &= (\aij{23} - \aij{12})(\delta_{(13)}+\delta_{(123)})+(\aij{23} - \aij{13})(\delta_{(12)}+\delta_{(132)}),\\
\fij{12} &= (\aij{12} - \aij{13})(\delta_{(23)}+\delta_{(123)}) + (\aij{12} - \aij{23})(\delta_{(13)}+\delta_{(132)}).
\end{aligned}
\end{align}

We know from \cite{AV} that $\cA_{[\ba]}$ is a Hopf algebra of dimension 72 and coradical isomorphic to $\ku^{\Sn_3}$,
for any $\ba\in\gA_3$. Furthermore, any \fd{} non-semisimple Hopf algebra with coradical $\ku^{\Sn_3}$ is isomorphic to
$\cA_{[\ba]}$ for some $\ba\in\gA_3$; $\cA_{[\bb]}\simeq\cA_{[\ba]}$ iff $[\ba]=[\bb]$.
Let $\Omega= \fij{13}(\mbox{\footnotesize (12)}\underline{\quad})- \fij{13}$, that is
\begin{equation}\label{eq:omega}
\begin{aligned}
\Omega = &(\aij{23} - \aij{13})(\dij{(12)}-\dij{e})\\ & + (\aij{13} - \aij{12})(\dij{(13)}-\dij{(132)}) + (\aij{12} - \aij{23})(\dij{(23)}-\dij{(123)}).
\end{aligned}
\end{equation}
The following formulae follow from the defining relations:
\begin{align}
\label{eq: rel 12 13 12}
\xij{12}\xij{13}\xij{12}=& \xij{13}\xij{12}\xij{13}+\xij{23}(\aij{13} - \aij{12}),\\
\label{eq: rel 23 12 23}
\xij{23}\xij{12}\xij{23}=& \xij{12}\xij{23}\xij{12}-\xij{13}(\aij{23} - \aij{12})\,\mbox{ and}\\
\label{eq: rel 23 12 13}
\xij{23}\xij{12}\xij{13}=& \xij{13}\xij{12}\xij{23}+\xij{12}\Omega.
\end{align}

Let
$$
\B=\left\{
\begin{matrix}
1, &\xij{13}, &\xij{13}\xij{12}, &\xij{13}\xij{12}\xij{13}, &\xij{13}\xij{12}\xij{23}\xij{12},\\
   &\xij{23}, &\xij{12}\xij{13}, &\xij{12}\xij{23}\xij{12},\\
   &\xij{12}, &\xij{23}\xij{12}, &\xij{13}\xij{12}\xij{23},\\
   &          &\xij{12}\xij{23}
\end{matrix}
\right\}.
$$
Then $\{x\delta_g|x\in \B,\,g\in\Sn_3\}$ is a basis of $\cA_{[\ba]}$ \cite{AV}.
Fix $g\in G$. The classes of
the monomials in $\B$ form a basis of the Verma module $M_g$. Denote by $m_{(ij)\dots (rs)}$ the class of $\xij{ij}\dots\xij{rs}$; we simply
set $\mcuatro = m_{(13)(12)(23)(12)}$.
The action of $\cA_{[\ba]}$ on $M_g$ is described in this basis by the following formulae:
\begin{align}\label{eq:action-Verma-group-uno}
f\cdot m_1 &= f(g) m_1, & f&\in \ku^{\Sn_3};
\\ \label{eq:action-Verma-group-letras} f\cdot m_{(ij)\dots (rs)} &= f(\mbox{\footnotesize (ij)\dots (rs)} g)\, m_{(ij)\dots (rs)},  & f&\in \ku^{\Sn_3};
\\\label{eq:action-Verma-uno} \xij{ij}\cdot m_1 &= \mij{ij}, & (ij)&\in \Oc^{3}_2;
\\ \label{eq:action-Verma-letras1}
\xij{ij}\cdot\mij{ij} &= \fij{ij}(g)m_1, & (ij)&\in \Oc^{3}_2;
\\ \label{eqn:cuentas-muno} \xij{13}\cdot\mij{23}  &= -\mdos{23}{12}-\mdos{12}{13},
\\ \label{eqn:13 en 12} \xij{13}\cdot\mij{12} &= \mdos{13}{12},
\\
\xij{23}\cdot\mij{13} &= -\mdos{12}{23}-\mdos{13}{12},
\\ \label{eqn:23 en 12} \xij{23}\cdot\mij{12} &= \mdos{23}{12},
\\
\xij{12}\cdot\mij{13} &= \mdos{12}{13},
\\  \xij{12}\cdot\mij{23}  &= \mdos{12}{23};
\end{align}
\begin{align}
\xij{13}\cdot\mdos{13}{12} & = \fij{13}(\mbox{\footnotesize (12)}g)\, \mij{12},
\\ \xij{13}\cdot \mdos{12}{13} & = \mtres{13}{12}{13},
\\ \xij{13}\cdot \mdos{23}{12} & =  -\mtres{13}{12}{13} - \fij{13}(\mbox{\footnotesize (23)}g)\, \mij{23}
\\ \xij{13}\cdot \mdos{12}{23} & = \mtres{13}{12}{23};
\\
\xij{23}\cdot\mdos{13}{12} & = -\mtres{12}{23}{12} - \fij{12} (g) \mij{13},
\\ \xij{23}\cdot \mdos{12}{13} & = \mtres{13}{12}{23} + \Omega(g) \mij{12},
\\ \xij{23}\cdot \mdos{23}{12} & = \fij{23}(\mbox{\footnotesize (12)}g)\mij{12},
\\ \xij{23}\cdot \mdos{12}{23} & = \mtres{12}{23}{12}-\mij{13}\fij{23}(\mbox{\footnotesize (13)}),
\\
\xij{12}\cdot\mdos{13}{12} & = \mtres{13}{12}{13}+\mij{23}\fij{13}(\mbox{\footnotesize (23)}),
\\ \xij{12}\cdot \mdos{12}{13} & = \fij{12}(\mbox{\footnotesize (13)}g) \mij{13},
\\ \xij{12}\cdot \mdos{23}{12} & = \mtres{12}{23}{12},
\\ \xij{12}\cdot \mdos{12}{23} & = \fij{12}(\mbox{\footnotesize (23)}g) \mij{23};
\end{align}
\begin{align}
\xij{13}\cdot\mtres{13}{12}{13} & =  \fij{13}(\mbox{\footnotesize (12)}\mbox{\footnotesize (13)}g)\, \mdos{12}{13},
\\ \xij{13}\cdot \mtres{12}{23}{12} & = \mcuatro,
\\ \label{eq:13 act 13 12 23}\xij{13}\cdot \mtres{13}{12}{23} & = \fij{13}(\mbox{\footnotesize (12)}\mbox{\footnotesize (23)}g) \, \mdos{12}{23},
\\
\xij{23}\cdot\mtres{13}{12}{13} & = \mcuatro - (\fij{12}\Omega + (\aij{13} - \aij{12})\fij{23})(g) m_1,
\\ \xij{23}\cdot \mtres{12}{23}{12} & = \fij{12}(g)\mdos{12}{23} + (\aij{12} - \aij{23})\mdos{13}{12},
\\ \label{eq:23 act 13 12 23} \xij{23}\cdot \mtres{13}{12}{23} & = \fij{23}(\mbox{\footnotesize (23)}\mbox{\footnotesize (12)}g)\mdos{12}{13} - \Omega(g)\mdos{23}{12},
\\ \xij{12}\cdot\mtres{13}{12}{13} & =  (\fij{13}(g) + \fij{12}(\mbox{\footnotesize (23)}))\mdos{13}{12}+
\fij{12}(\mbox{\footnotesize (23)})\mdos{12}{23},
\\ \xij{12}\cdot \mtres{12}{23}{12} & = \fij{12}(\mbox{\footnotesize (23)}\mbox{\footnotesize (12)}g)\, \mdos{23}{12},
\\\label{eqn:cuentas-mtres} \xij{12}\cdot \mtres{13}{12}{23} & = - \mcuatro + (\fij{13}(\mbox{\footnotesize (23)})\fij{23} - \fij{12}(\mbox{\footnotesize (13)}\underline{\quad})\fij{13})(g) m_1;
\end{align}
\begin{align}
\label{eq:mcuatro-b} \xij{13}\cdot \mcuatro & = \fij{13}(g)\,  \mtres{12}{23}{12},
\\
\label{eq:mcuatro-a}\xij{23}\cdot \mcuatro & = \fij{23}(g)\mtres{13}{12}{13}+(\fij{13}(\mbox{\footnotesize (23)})\fij{23}+\Omega\fij{12})(g)\mij{23},
\\
\label{eq:mcuatro-c}
\xij{12}\cdot \mcuatro & =-\fij{12}(g)\mtres{13}{12}{23}\\
\notag &+ (\fij{13}(\mbox{\footnotesize (23)})\fij{23}(\mbox{\footnotesize (12)}\underline{\quad})-\fij{12}(\mbox{\footnotesize (23)}\underline{\quad})
\fij{13}(\mbox{\footnotesize (12)}\underline{\quad}))(g)\mij{12};
\end{align}

\bigbreak
To proceed with the description of the simple modules, we split the consideration of the algebras $\cA_{[\ba]}$
into several cases.
\begin{itemize}
\medbreak
 \item $\aij{13} = \aij{12} = \aij{23}$. In this case, there is a projection $\cA_{[\ba]}\to \ku^{\Sn_3}$. It is easy to see that any simple
  $\cA_{[\ba]}$-module is obtained from a simple $\ku^{\Sn_3}$-module composing with this projection; thus, $\widehat{\cA_{[\ba]}} \simeq
  \Sn_3$.

\medbreak
  \item $\aij{13} = \aij{12}$ or $\aij{23} = \aij{12}$ or $\aij{13}  = \aij{23}$, but not in the previous case.
  Up to isomorphism, cf. \eqref{equ:action}, we may assume $\aij{12}\neq\aij{13}=\aij{23}$. For shortness, we shall say that
  $\ba$ is \emph{sub-generic}.

\medbreak
  \item $\ba$ is generic.

\end{itemize}

In the next subsections, we investigate these two different cases. Let us consider the decomposition of the Verma module $M_g$
in isotypic components as $\ku^{\Sn_3}$-modules. The isotypic components of the Verma module $M_e$ are
\begin{equation}\label{eq:isotypic}
\begin{aligned}
M_e[e] &= \langle m_1, \mcuatro\rangle, & M_e[\mbox{\footnotesize (12)}] &= \langle \mij{12}, \mtres{13}{12}{23}\rangle, \\
M_e[\mbox{\footnotesize (13)}] &= \langle \mij{13}, \mtres{12}{23}{12} \rangle, &
M_e[\mbox{\footnotesize (23)}] &= \langle \mij{23}, \mtres{13}{12}{13}\rangle, \\
M_e[\mbox{\footnotesize (123)}] &= \langle \mdos{13}{12}, \mdos{12}{23}\rangle,
& M_e[\mbox{\footnotesize (132)}] &= \langle \mdos{12}{13}, \mdos{23}{12}\rangle.
\end{aligned}
\end{equation}
Let $g, h \in \Sn_3$, $(ij)\in \Oc_2^3$. By \eqref{eq:action-Verma-group-letras} and \eqref{eq:comp-isotyp}, we have
\begin{align}\label{eq:isotypic-gral}
M_g[h] &= M_e[hg^{-1}], \\
\label{eq:action-monomials}\xij{ij}\cdot M_g[h] &\subseteq M_g[\mbox{\footnotesize (ij)}h].
\end{align}
It is convenient to introduce the following elements:
\begin{align}\label{eq:msoc}
m_{\textsf{soc}} &= \fij{13}(\mbox{\rm\footnotesize (23)})\fij{23}(\mbox{\rm\footnotesize (13)}) m_1-\mcuatro,
\\\label{eq:mo}
m_{\textsf{o}} &= \mtres{13}{12}{13}+\fij{13}(\mbox{\rm\footnotesize (23)})\mij{23}.
\end{align}

\subsection{Case $\ba\in\gA_3$ generic.}\label{subsec: generic case}

\

To determine the simple $\cA_{[\ba]}$-modules, we just need to determine the maximal submodules of the various Verma modules.
By Lemma \ref{le:bounded in the dimension of modules for aij all different} \eqref{item:lema-generic-b}, we are reduced to consider
the Verma modules $M_e$ and $M_g$ for some fixed $g\neq e$. We choose $g = \mbox{\footnotesize (13)(23)}$; for the sake of an easy exposition, we write
the elements of $\Sn_3$ as products of transpositions.

We start with the following observation.
Let $M$ be a cyclic $\cA_{[\ba]}$-module, generated by $v\in M[\mbox{\footnotesize (13)(23)}]$.
By \eqref{eq:action-monomials} and acting by the  monomials in our basis of $\cA_{[\ba]}$, we see that
$M[\mbox{\footnotesize (23)(13)}]=\langle \xij{13}\xij{23}\cdot v, \xij{23}\xij{12}\cdot v, \xij{12}\xij{13}\cdot v\rangle$.
This weight space is $\neq 0$ by Lemma \ref{le:bounded in the dimension of modules for aij all different} \eqref{item:lema-generic-a},
and a further application of this Lemma gives the following result.

\begin{Rem}\label{obs:ciclico-generico}
Let $M$ be a cyclic $\cA_{[\ba]}$-module, generated by $v\in M[\mbox{\footnotesize (13)(23)}]$.
If $\dim M[\mbox{\footnotesize (23)(13)}]= 1$, then
\begin{equation}\label{eq:module generated by 1 element}
\begin{aligned}
M[\mbox{\footnotesize (23)}]&=\langle\xij{13}\cdot v\rangle,& M[e]&=\langle \xij{12}\xij{23}\cdot v, \xij{13}\xij{12}\cdot v\rangle,\\
M[\mbox{\footnotesize (12)}]&=\langle \xij{23}\cdot v\rangle, & M[\mbox{\footnotesize (13)}]&=\langle\xij{12}\cdot v\rangle, \\
M[\mbox{\footnotesize (13)(23)}]&=\langle v\rangle,
& M[\mbox{\footnotesize (23)(13)}]&=\langle \xij{13}\xij{23}\cdot v\rangle.
\end{aligned}
\end{equation}
\end{Rem}
Thus, any cyclic module as in the Remark has either dimension 5, 6 or 7. Moreover, there is a simple module $L$ like this;
$L$ has a basis $\{v_g|e\neq g\in\Sn_3\}$ and the action is given by
\begin{align}\label{eq:def-L}
v_g&\in L[g], & \xij{ij}\cdot v_g
&=\begin{cases}
v_{(ij)g} & \mbox{ if }\sgn g=1,\\
\fij{ij}(g) v_{(ij)g}& \mbox{ if }\sgn g=-1.\\
\end{cases}
\end{align}
Let $\ku_e$ be as in Lemma \ref{prop:dim-uno}. We shall see that $L$ and $\ku_{e}$ are the only simple modules of $\cA_{[\ba]}$.

\medbreak
The Verma module $M_e$ projects onto the simple submodule $\ku_e$, hence the kernel of this projection is a maximal submodule; explicitly this is
$$N_e=\cA_{[\ba]}\cdot M_e[\mbox{\footnotesize (13)(23)}]=\oplus_{g\link(13)(23)}M_e[g]\oplus\langle\mcuatro\rangle.$$
We see that this is the unique maximal submodule, as consequence of the following description of all submodules of $M_e$.

\begin{lema}\label{le:submodules Me in the generic case}
The submodules of $M_e$ are
\begin{align*}
\langle\mcuatro\rangle\subsetneq\cA_{[\ba]}\cdot v\subsetneq N_e\subsetneq M_e
\end{align*}
for any $v\in M_e[\mbox{\rm\footnotesize (13)(23)}] - 0$. The submodules $\cA_{[\ba]}\cdot v$ and $\cA_{[\ba]}\cdot u$ coincide iff $v\in\langle u\rangle$. The quotients
$\cA_{[\ba]}\cdot v /\langle\mcuatro\rangle$ and  $N_e/ \cA_{[\ba]}\cdot v$ are isomorphic to $L$; and $M_e/N_e$ and $\langle\mcuatro\rangle$ are isomorphic to $\ku_e$.
\end{lema}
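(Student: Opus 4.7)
The plan is to decompose every submodule of $M_e$ via the $\ku^{\Sn_3}$-weight grading \eqref{eq:isotypic} and exploit Lemma \ref{le:bounded in the dimension of modules for aij all different}\eqref{item:lema-generic-a}, which in the generic case forces a common dimension $d\in\{0,1,2\}$ for every weight space $M[h]$ with $h\neq e$, together with the fact that $m_1$ generates $M_e$. First I would check the two extremes. Specializing \eqref{eq:mcuatro-b}--\eqref{eq:mcuatro-c} at $g=e$, the action of each transposition on $\mcuatro$ vanishes because $\fij{ij}(e)=0$ and the surviving scalar in \eqref{eq:mcuatro-c} cancels identically in the $\aij{ij}$; hence $\langle\mcuatro\rangle\cong\ku_e$ is a submodule. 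For $N_e$, Lemma \ref{le:bounded in the dimension of modules for aij all different}\eqref{item:lema-generic-a} forces its non-trivial weight spaces to have full dimension $2$, while $N_e[e]$ cannot contain $m_1$ (lest $N_e=M_e$), so $\dim N_e=11$ and $M_e/N_e\cong\ku_e$ via $\overline{m_1}$.

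To sandwich any nonzero proper submodule $M$ between these two, I would argue as follows. Properness gives $\dim M[e]\le 1$; and if $M[e]$ contains $\alpha m_1+\beta\mcuatro$ with $\alpha\neq 0$, then \eqref{eq:action-Verma-uno} combined with $\fij{ij}(e)=0$ yields $\xij{ij}\cdot(\alpha m_1+\beta\mcuatro)=\alpha\mij{ij}\neq 0$, contradicting $d=0$; so the case $d=0$ collapses to $\langle\mcuatro\rangle$. For $d\ge 1$, the $d=1$ computation below produces $\mcuatro$ explicitly in any cyclic submodule generated from $M_e[(13)(23)]$, forcing $\langle\mcuatro\rangle\subseteq M$ in every nonzero case. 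The extreme $d=2$ then forces $M=N_e$ by dimension, leaving only $d=1$ to analyze.

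The heart of the proof is the $d=1$ case. Parametrizing $0\neq v=\alpha\mdos{12}{13}+\beta\mdos{23}{12}\in M_e[(13)(23)]$ and applying \eqref{eqn:13 en 12}--\eqref{eqn:cuentas-mtres} (with $\Omega(e)=\aij{13}-\aij{23}$ and the prescribed values of $\fij{ij}$ at transpositions), each of the three vectors $\xij{13}\xij{23}\cdot v$, $\xij{12}\xij{13}\cdot v$, $\xij{23}\xij{12}\cdot v$ in $M_e[(123)]$ turns out to be a nonzero scalar multiple of the single vector $(\alpha-\beta)\mdos{13}{12}+\alpha\mdos{12}{23}$. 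Hence $\dim(\cA_{[\ba]}\cdot v)[(123)]=1$, Lemma \ref{le:bounded in the dimension of modules for aij all different}\eqref{item:lema-generic-a} propagates this to every nontrivial weight space, and Remark \ref{obs:ciclico-generico} applies. The companion formulas $\xij{13}\xij{12}\cdot v=\beta\mcuatro$ and $\xij{12}\xij{23}\cdot v=-\alpha\mcuatro$ then place $\mcuatro$ inside $\cA_{[\ba]}\cdot v$, so $\dim\cA_{[\ba]}\cdot v=6$. The equivalence $\cA_{[\ba]}\cdot v=\cA_{[\ba]}\cdot u\Leftrightarrow\langle v\rangle=\langle u\rangle$ is then immediate since $(\cA_{[\ba]}\cdot u)[(13)(23)]$ is one-dimensional and contains $u$. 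Finally, each of the two $5$-dimensional quotients $\cA_{[\ba]}\cdot v/\langle\mcuatro\rangle$ and $N_e/\cA_{[\ba]}\cdot v$ has trivial $e$-weight and one-dimensional weight spaces elsewhere; the same weight-dimension principle makes each of them simple, and matching transposition actions against \eqref{eq:def-L} identifies both with $L$.

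The main obstacle is precisely the collapse of three a priori distinct vectors in $M_e[(123)]$ to a single line: this is where the quadratic relations \eqref{eq: rel 12 13 12}--\eqref{eq: rel 23 12 13} and the genericity hypothesis (ensuring the common scalar factors are nonzero) combine decisively. Once that collapse is in hand, everything else reduces to bookkeeping with the weight-space dimension principle.
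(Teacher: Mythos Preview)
Your proposal is correct and follows essentially the same route as the paper's proof: both hinge on the explicit computation showing that for any nonzero $v\in M_e[(13)(23)]$ the three products $\xij{13}\xij{23}\cdot v$, $\xij{23}\xij{12}\cdot v$, $\xij{12}\xij{13}\cdot v$ collapse to a single line in $M_e[(23)(13)]$, which (via Remark~\ref{obs:ciclico-generico}) pins down $\cA_{[\ba]}\cdot v$ as a $6$-dimensional module containing $\mcuatro$. Your organization by the common weight-space dimension $d\in\{0,1,2\}$ is a clean way to package the casework, but the underlying computations and the use of Lemma~\ref{le:bounded in the dimension of modules for aij all different}\eqref{item:lema-generic-a} are identical to the paper's; the only small point left implicit is that in the $d=1$ case the inclusion $\cA_{[\ba]}\cdot v\subseteq M$ is an equality by a dimension count (both sides have dimension $6$), which you should state.
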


\begin{proof}
By \eqref{eq:mcuatro-a}, \eqref{eq:mcuatro-b} and \eqref{eq:mcuatro-c}, we have $\xij{ij}\cdot\mcuatro=0$ for all $(ij)\in\cO_2^3$.
Let
\begin{align*}
v&=\lambda\mdos{23}{12}+\mu\mdos{12}{13} & &\in M_e[\mbox{\footnotesize \mbox{\footnotesize (13)}(23)}] - 0,
\\w &=\mu\mdos{12}{23}+(\mu-\lambda)\mdos{13}{12} & &\in M_e[\mbox{\footnotesize (23)(13)}].
\end{align*}
Using the formulae \eqref{eqn:cuentas-muno} to \eqref{eqn:cuentas-mtres}, we see that $\xij{13}\xij{23}\cdot v$, $\xij{23}\xij{12}\cdot v$ and $\xij{12}\xij{13}\cdot v$ are non-zero multiples of
$w$. That is, $\dim (\cA_{[\ba]}\cdot v)[\mbox{\footnotesize (23)(13)}] = 1$. Also, $\xij{12}\xij{23}\cdot v=-\mu\mcuatro$ and
$\xij{13}\xij{12}\cdot v=\lambda\mcuatro$. Hence
$$\biggl\{v,\,\xij{23}\cdot v,\,\xij{12}\cdot v,\,\xij{13}\cdot v,\, w,\, \mcuatro\biggr\}$$
is a basis of $\cA_{[\ba]}\cdot v$ by Remark \ref{obs:ciclico-generico}.

Let now $N$ be a (proper, non-trivial) submodule of $M_e$. If $N \neq \langle\mcuatro\rangle$, then there exists  $v\in N[\mbox{\footnotesize (13)(23)}] - 0$.
Hence $\cA_{[\ba]}\cdot v$ is a submodule of $N$ and $N[e]=\langle\mcuatro\rangle$ because $m_1\in M_e[e]$ and $\dim M_e[e]=2$. Therefore $N=\cA_{[\ba]}\cdot N[\mbox{\footnotesize (13)(23)}]$.
\end{proof}

It is convenient to introduce the following $\cA_{[\ba]}$-modules which we will use in the Section \ref{sec: tipo de rep}.
\begin{definition}\label{def: Wt ext L by ke - a generic}
Let $\bt\in\gA_3$. We denote by $W_{\bt}(L,\ku_e)$ the $\cA_{[\ba]}$-module with basis $\{w_g:g\in\Sn_3\}$ and action given by
\begin{align*}
&w_g\in W_{\bt}(L,\ku_e)[g],& &\xij{ij}\cdot w_g
=\begin{cases}
0 & \mbox{ if }g=e,\\
w_{(ij)g}   & \mbox{ if }g\neq e\mbox{ and }\sgn g=1,\\
\fij{ij}(g) w_{(ij)g}  & \mbox{ if }g\neq(ij)\mbox{ and }\sgn g=-1,\\
\tij{ij}w_e & \mbox{ if }g=(ij).
\end{cases}
\end{align*}
\end{definition}
The well-definition of $W_{\bt}$ follows from the next lemma.
\begin{lema}\label{le: Wt ext L by ke - a generic}
Let $\bt, \tilde{\bt}\in\gA_3$.
\renewcommand{\theenumi}{\alph{enumi}}   \renewcommand{\labelenumi}{(\theenumi)}
\begin{enumerate}
\item\label{item: Wt ext L by ke - a generic: t=0}
If $\bt=(0,0,0)$, then $W_{\bt}(L,\ku_e)\simeq\ku_e\oplus L$.
\smallbreak
\item \label{item: Wt ext L by ke - a generic: t non 0}
If $\bt\neq(0,0,0)$, then there exists $v\in M_e[\mbox{\rm\footnotesize (13)(23)}] - 0$ such that $W_{\bt}(L,\ku_e)\simeq\cA_{[\ba]}\cdot v$.
\item \label{item: Wt ext L by ke - a generic: t non 0 reciprocal}
If $v\in M_e[\mbox{\rm\footnotesize (13)(23)}] - 0$, then there exists $\bt\neq(0,0,0)$ such that $W_{\bt}(L,\ku_e)\simeq\cA_{[\ba]}\cdot v$.
\smallbreak
\smallbreak
\item \label{item: Wt ext L by ke - a generic: is an ext}
$W_{\bt}(L,\ku_e)$ is an extension of $L$ by $\ku_e$.
\smallbreak
\item \label{item: Wt ext L by ke - a generic: iso}
$W_{\bt}(L,\ku_e)\simeq W_{\tilde{\bt}}(L,\ku_e)$ if and only if
$\bt=\mu\tilde{\bt}$ with $\mu\in\ku^\times$.
\end{enumerate}
\end{lema}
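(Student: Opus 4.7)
My plan is to first verify that the prescribed action makes $W_{\bt}(L,\ku_e)$ into a well-defined $\cA_{[\ba]}$-module, and then to identify it with a cyclic submodule of $M_e$ via Lemma \ref{le:submodules Me in the generic case}, from which (a)--(e) follow.

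\emph{Well-definedness and (d).} I would check that the defining relations \eqref{eq:rels-ideal} hold on each basis vector $w_g$. The key tool is \eqref{eq:fij-propiedad}, which yields $\fij{ij}((ij)g)=\fij{ij}(g)$ for every $g$, and in particular $\fij{ij}((ij))=\fij{ij}(e)=0$. Splitting on $g\in\{e,\,(ij),\,\text{other transposition},\,3\text{-cycle}\}$ then gives $\xij{ij}^2\cdot w_g=\fij{ij}(g)\,w_g$ in each case; the relations $R_{(13)(23)}$ and $R_{(23)(13)}$ are dealt with by the same case analysis, since all three summands vanish on $w_e$ and route every other $w_g$ through a common element with matching coefficients. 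Since $\xij{ij}\cdot w_e=0$ by construction, $\ku w_e$ is a submodule isomorphic to $\ku_e$ by Lemma \ref{prop:dim-uno}, and the quotient carries precisely the action \eqref{eq:def-L} of $L$, establishing (d).

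\emph{Parts (a), (b), (c).} When $\bt=(0,0,0)$ the last line of the action vanishes, so $\bigoplus_{g\neq e}\ku w_g$ is a complementary submodule isomorphic to $L$, giving (a). For (b) and (c), I fix $v=\lambda\mdos{23}{12}+\mu\mdos{12}{13}\in M_e[\mbox{\rm\footnotesize (13)(23)}]\setminus 0$; the proof of Lemma \ref{le:submodules Me in the generic case} gives an explicit six-vector basis of $\cA_{[\ba]}\cdot v$ with one-dimensional $\ku^{\Sn_3}$-isotypic components, the $[e]$-component being spanned by $\mcuatro$. I would then build $\varphi:W_{\bt}\to\cA_{[\ba]}\cdot v$ by $w_{(13)(23)}\mapsto v$, $w_e\mapsto\mcuatro$, and each $w_{(ij)}$ mapping to the appropriate scalar multiple of $\xij{i'j'}\cdot v$ (where $(i'j')(13)(23)=(ij)$) so as to realize the $\sgn g=1$ identities; the coefficients $\tij{ij}$ are then forced, and a direct computation via \eqref{eqn:cuentas-muno}--\eqref{eq:mcuatro-c} expresses $\bt$ as an explicit linear function of $(\lambda,\mu)$. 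Since $\dim\gA_3=2$ and a single explicit calculation shows rank $2$, this linear map is a bijection between nonzero $(\lambda,\mu)$ and nonzero $\bt$, establishing both (b) and (c).

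\emph{Part (e) and main obstacle.} Any isomorphism $W_{\bt}\to W_{\tilde\bt}$ preserves the $\ku^{\Sn_3}$-isotypic decomposition by \eqref{eq:action-of-coradical}, hence acts as a scalar $c_g$ on each one-dimensional component. The relations $\xij{ij}\cdot w_g=w_{(ij)g}$ for $g$ a $3$-cycle, together with the fact that every non-identity element of $\Sn_3$ is $\ba$-linked to $(12)$ by Lemma \ref{le:bounded in the dimension of modules for aij all different}, force a common scalar $c$ on the five-dimensional part; the identity $\xij{ij}\cdot w_{(ij)}=\tij{ij}w_e$ then yields $c\,\tilde{t}_{(ij)}=c_e\,\tij{ij}$, so $\tilde\bt=(c_e/c)\bt$, and the converse is immediate by rescaling $w_e$. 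I expect the main obstacle to be the well-definedness check: verifying each power and quadratic relation on every $w_g$ is mechanical but lengthy, and one has to sort out carefully which of \eqref{eqn:cuentas-muno}--\eqref{eq:mcuatro-c} to invoke in each case. Once this is in place, Lemma \ref{le:submodules Me in the generic case} reduces the remaining parts to tracking scalars.
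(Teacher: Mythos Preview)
Your plan is correct and lands on the same core idea as the paper --- realizing $W_{\bt}(L,\ku_e)$ as a cyclic submodule $\cA_{[\ba]}\cdot v\subset M_e$ via Lemma~\ref{le:submodules Me in the generic case} --- but you organize the argument differently in two places worth noting.

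First, you front-load a direct verification of the defining relations on each $w_g$, which you rightly flag as the main labor. The paper sidesteps this entirely: it states that well-definedness \emph{follows} from (a) and (b), since for $\bt=0$ the module is the known $\ku_e\oplus L$ and for $\bt\neq 0$ it is exhibited as a concrete submodule of $M_e$. Concretely, the paper sets $w_{(13)(23)}=\tij{13}\mdos{23}{12}-\tij{12}\mdos{12}{13}$ and defines the remaining $w_g$ by acting with specified monomials, so the relations hold automatically; this is the reverse of your direction (you fix $v$ and read off $\bt$, then invert the rank-$2$ linear map). Both directions work, but the paper's choice eliminates the tedious case check you anticipate.

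Second, for (e) you force a common scalar on the $L$-part by chasing the relations $\xij{ij}\cdot w_g=w_{(ij)g}$ through the $\ba$-linked elements. The paper instead observes that any isomorphism $W_{\bt}\to W_{\tilde\bt}$ induces an automorphism of the quotient $L$, and since $L$ is simple this automorphism is a scalar $\mu_L$ by Schur; comparing $\xij{ij}\cdot w_{(ij)}$ then gives $\bt=(\mu_L/\mu_e)\tilde\bt$. Your route is fine, but the Schur argument is shorter and avoids tracking individual relations.
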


\begin{proof}
\eqref{item: Wt ext L by ke - a generic: t=0} is immediate. If we prove \eqref{item: Wt ext L by ke - a generic: t non 0}, then \eqref{item: Wt ext L by ke - a generic: is an ext} follows from Lemma \ref{le:submodules Me in the generic case}.

\eqref{item: Wt ext L by ke - a generic: t non 0} We set $w_{(13)(23)}=\tij{13}\mdos{23}{12}-\tij{12}\mdos{12}{13}\in M_e[\mbox{\footnotesize \mbox{\footnotesize (13)}(23)}]-0$,
\begin{align*}
&w_{(23)}=\xij{13}\cdot w_{(13)(23)},\quad w_{(13)}=\xij{12}\cdot w_{(13)(23)},\quad w_{(12)}=\xij{23}\cdot w_{(13)(23)},\\
&w_{(23)(13)}=\frac{1}{\fij{23}(\mbox{\footnotesize (13)})}\xij{23}\xij{12}\cdot w_{(13)(23)}\quad\mbox{ and }\quad w_e=\mcuatro.
\end{align*}
By the proof of Lemma \ref{le:submodules Me in the generic case} and \eqref{eq: rel 23 12 23}, we see that $W_{\bt}(L,\ku_e)\simeq\cA_{[\ba]}\cdot w_{(13)(23)}$.
\eqref{item: Wt ext L by ke - a generic: t non 0 reciprocal} follows from the proof of Lemma \ref{le:submodules Me in the generic case}.
\eqref{item: Wt ext L by ke - a generic: iso} Let $\{\tilde{w}_g:g\in\Sn_3\}$ be the basis of $W_{\tilde{\bt}}(L,\ku_e)$ according to Definition \ref{def: Wt ext L by ke - a generic}. Let $F:W_{\bt}(L,\ku_e)\rightarrow W_{\tilde{\bt}}(L,\ku_e)$ be an isomorphism of $\cA_{[\ba]}$-module. Since $F$ is an isomorphism of $\ku^{\Sn_3}$-modules, there exists $\mu_{g}\in\ku^\times$ for all $g\in\Sn_3$ such that $F(w_g)=\mu_g \tilde{w}_g$. In particular, $F$ induces an automorphism of $L$. Since $L$ is simple (cf. Theorem \ref{thm:simples in the generic case}), $\mu_{g}=\mu_L$ for all $g\neq e$. Since
$F(\xij{ij}\cdot w_{(ij)})=\xij{ij}\cdot F(w_{(ij)})$, we see that $\bt=\frac{\mu_L}{\mu_e}\tilde{\bt}$. Conversely, $F$ is well defined for all $\mu_e$ and $\mu_L$ such that $\mu=\frac{\mu_L}{\mu_e}$.
\end{proof}

The Verma module $M_{(13)(23)}$ projects onto the simple module $L$, hence the kernel of this projection is a maximal submodule; explicitly this is
$$N_{(13)(23)} = \cA_{[\ba]}\cdot M_{(13)(23)}[e]=M_{(13)(23)}[e]\oplus\cA_{[\ba]}\cdot m_{\textsf{soc}}.$$
We see that this is the unique maximal submodule, as consequence of the following description of all submodules of $M_{(13)(23)}$.
Recall $m_{\textsf{soc}}$ from \eqref{eq:msoc}.

\begin{lema}\label{le:submodules Mg in the generic case}
The submodules of $M_{(13)(23)}$ are
\begin{align*}
\cA_{[\ba]}\cdot m_{\textsf{soc}} \subsetneq\cA_{[\ba]}\cdot v\subsetneq N_{(13)(23)} \subsetneq M_{(13)(23)}
\end{align*}
for all $v\in M_{(13)(23)}[e]-0$. The submodules $\cA_{[\ba]}\cdot v$ and $\cA_{[\ba]}\cdot u$ coincide iff $v\in\langle u\rangle$. The quotients
$\cA_{[\ba]}\cdot v /\cA_{[\ba]}\cdot m_{\textsf{soc}}$ and  $N_{(13)(23)}/ \cA_{[\ba]}\cdot v$ are isomorphic to $\ku_e$; and $M_{(13)(23)}/N_{(13)(23)}$ and $\cA_{[\ba]}\cdot m_{\textsf{soc}}$ are isomorphic to $L$.
\end{lema}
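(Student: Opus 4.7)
The plan is to mirror the proof of Lemma \ref{le:submodules Me in the generic case}, interchanging the roles of $\ku_e$ and $L$. In $M_{(13)(23)}$ the two-dimensional isotypic component governing the lattice is $M_{(13)(23)}[e]$ (rather than $M_e[\mbox{\footnotesize(13)(23)}]$), the unique simple submodule will be $\cA_{[\ba]}\cdot m_{\textsf{soc}}\simeq L$ (rather than $\langle\mcuatro\rangle\simeq\ku_e$), and the composition factors between this socle and the unique maximal submodule $N_{(13)(23)}$ will be copies of $\ku_e$ (rather than $L$). The proof decomposes into three steps: (i) prove that $\cA_{[\ba]}\cdot m_{\textsf{soc}}$ is a simple submodule isomorphic to $L$ and is the unique simple submodule; (ii) describe the cyclic submodules $\cA_{[\ba]}\cdot v$ for $v\in M_{(13)(23)}[e] - 0$; (iii) combine these to classify all proper nontrivial submodules.

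For (i), note that both $m_1$ and $\mcuatro$ lie in $M_{(13)(23)}[(13)(23)]$, so $m_{\textsf{soc}}$ does as well. Applying each $\xij{ij}$ to $m_{\textsf{soc}}$ via \eqref{eq:action-Verma-uno} for the $m_1$ summand and \eqref{eq:mcuatro-a}--\eqref{eq:mcuatro-c} with $g=(13)(23)$ for the $\mcuatro$ summand, and simplifying with \eqref{eq:fij-propiedad}, one checks that the five elements so obtained, indexed by the non-identity $h\in\Sn_3$, satisfy the recipe in \eqref{eq:def-L}; genericity provides the nonvanishing of the coefficients that must be invertible. Hence $\cA_{[\ba]}\cdot m_{\textsf{soc}}\simeq L$. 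For uniqueness among simple submodules, Frobenius reciprocity gives $\dim\Hom_{\cA_{[\ba]}}(S,M_{(13)(23)})=\dim\Hom_{\ku^{\Sn_3}}(S|_{\ku^{\Sn_3}},\ku_{(13)(23)})$, which equals $1$ for $S=L$ and $0$ for $S=\ku_e$. Hence every nonzero submodule of $M_{(13)(23)}$ contains $\cA_{[\ba]}\cdot m_{\textsf{soc}}$.

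For (ii), fix $v\in M_{(13)(23)}[e] - 0$. By Remark \ref{obs:fij}(a) each $\xij{ij}\cdot v$ is nonzero, and iterating products of transpositions, as in the proof of Lemma \ref{le:bounded in the dimension of modules for aij all different}, yields one-dimensional intersections of $\cA_{[\ba]}\cdot v$ with each weight space $M_{(13)(23)}[h]$ for $h\neq e$. The key computation --- the analogue of the identities $\xij{12}\xij{23}\cdot v=-\mu\mcuatro$ and $\xij{13}\xij{12}\cdot v=\lambda\mcuatro$ from the proof of Lemma \ref{le:submodules Me in the generic case} --- verifies that the image of $v$ under products of transpositions whose total equals $(13)(23)$ is a scalar multiple of $m_{\textsf{soc}}$, not of $m_1$ (otherwise $\cA_{[\ba]}\cdot v$ would exhaust $M_{(13)(23)}$). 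Hence $\cA_{[\ba]}\cdot v$ is six-dimensional, contains $\cA_{[\ba]}\cdot m_{\textsf{soc}}$, and its quotient is spanned by the class of $v$ on which every $\xij{ij}$ acts trivially, hence is isomorphic to $\ku_e$ by Lemma \ref{prop:dim-uno}. Summing over $v$ ranging through $M_{(13)(23)}[e]$ yields $N_{(13)(23)}=\cA_{[\ba]}\cdot M_{(13)(23)}[e]$ of dimension seven with $N_{(13)(23)}/\cA_{[\ba]}\cdot m_{\textsf{soc}}\simeq\ku_e\oplus\ku_e$.

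For (iii), by (i) any nonzero submodule $N$ contains $\cA_{[\ba]}\cdot m_{\textsf{soc}}$, so corresponds to a submodule of the quotient $M_{(13)(23)}/\cA_{[\ba]}\cdot m_{\textsf{soc}}$; this quotient has dimension seven with two-dimensional weight-$e$ component (equal to $N_{(13)(23)}/\cA_{[\ba]}\cdot m_{\textsf{soc}}\simeq\ku_e\oplus\ku_e$) and one-dimensional components at other weights. An explicit computation --- for instance $\xij{12}\cdot\mtres{13}{12}{13}=(\aij{12}-\aij{13})\mdos{12}{23}$, with $\mdos{12}{23}\in M_{(13)(23)}[e]$ a nonzero class in the quotient --- shows that the non-identity weight spaces of the quotient do not assemble into a submodule, so every submodule of the quotient either lies entirely inside $N_{(13)(23)}/\cA_{[\ba]}\cdot m_{\textsf{soc}}$ (giving $\cA_{[\ba]}\cdot v$ or $N_{(13)(23)}$) or equals the whole quotient (giving $M_{(13)(23)}$). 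The equivalence $\cA_{[\ba]}\cdot v=\cA_{[\ba]}\cdot u\iff v\in\langle u\rangle$ follows since these submodules correspond bijectively to lines in $M_{(13)(23)}[e]$, and $M_{(13)(23)}/N_{(13)(23)}$, being five-dimensional and supported only in non-identity weights, equals $L$. The main obstacle is the coefficient bookkeeping in step (ii): tracking the outputs of \eqref{eqn:cuentas-muno}--\eqref{eq:mcuatro-c} and \eqref{eq: rel 12 13 12}--\eqref{eq: rel 23 12 13} carefully enough to identify the weight-$(13)(23)$ images as multiples of $m_{\textsf{soc}}$.
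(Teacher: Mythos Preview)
Your overall strategy is sound but differs from the paper's, and it contains two gaps that need repair.

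\textbf{Comparison with the paper.} The paper does \emph{not} organize the proof around $M_{(13)(23)}[e]$. Instead it works with $M_{(13)(23)}[(13)(23)] = \langle m_1, \mcuatro\rangle$: for $v = \lambda m_1 + \mu\mcuatro$ it computes $\xij{12}\xij{13}\cdot v$ and $\xij{23}\xij{12}\cdot v$ explicitly and shows that $\dim(\cA_{[\ba]}\cdot v)[(23)(13)] = 1$ precisely when $\lambda + \mu\fij{13}((23))\fij{23}((13)) = 0$, i.e.\ when $v \in \langle m_{\textsf{soc}}\rangle$; Remark \ref{obs:ciclico-generico} then applies directly to give the $5$-dimensional basis of $\cA_{[\ba]}\cdot m_{\textsf{soc}}$. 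For a general submodule $N$ the paper does a case split on $\dim N[(13)(23)] \in \{0,1,2\}$, ruling out the case $0$ by checking $\ker\xij{13}\cap\ker\xij{23}\cap\ker\xij{12} = 0$. Your route --- first identifying the socle by an adjunction and then parametrizing proper submodules by lines in $M_{(13)(23)}[e]$ --- is a legitimate alternative and is arguably more conceptual, but it loses the direct applicability of Remark \ref{obs:ciclico-generico}.

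\textbf{Gap 1.} The identity $\Hom_{\cA_{[\ba]}}(S,M_{(13)(23)}) \simeq \Hom_{\ku^{\Sn_3}}(S,\ku_{(13)(23)})$ is \emph{not} Frobenius reciprocity: the adjunction in Proposition \ref{pr:induced} goes the other way, $\Hom_{\cA_{[\ba]}}(M_{(13)(23)},W) \simeq \Hom_{\ku^{\Sn_3}}(\ku_{(13)(23)},W)$. What you want is the coinduction adjunction together with the fact that induction and coinduction agree because $\cA_{[\ba]}$ is a Frobenius extension of $\ku^{\Sn_3}$; this is true but must be said. The paper avoids the issue entirely by the direct kernel computation mentioned above.

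\textbf{Gap 2.} Your invocation of Remark \ref{obs:fij}\eqref{item:rem-fij-a} in step (ii) is incorrect: that remark requires $\fij{ij}(h) \neq 0$, but $v \in M_{(13)(23)}[e]$ and $\fij{ij}(e) = 0$ for every $(ij)$. Indeed $\xij{ij}\cdot v$ can vanish for individual $(ij)$ (for instance $\xij{13}\cdot\mdos{13}{12} = \fij{13}((12)g)\mij{12} = \fij{13}((13))\mij{12} = 0$). What the ``key computation'' must actually establish is that the two basis monomials landing in weight $(13)(23)$, namely $\xij{12}\xij{13}\cdot v$ and $\xij{23}\xij{12}\cdot v$, are both scalar multiples of $m_{\textsf{soc}}$; then your step (i) gives $\cA_{[\ba]}\cdot m_{\textsf{soc}} \subseteq \cA_{[\ba]}\cdot v$, and Lemma \ref{le:bounded in the dimension of modules for aij all different}\eqref{item:lema-generic-a} forces all weight spaces at $h\neq e$ to be one-dimensional. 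Once these two points are fixed, your argument goes through.
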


\begin{proof}
Let $v=\lambda m_1+\mu\mcuatro\in M_{(13)(23)}[\mbox{\footnotesize (13)(23)}] - 0$ and $N = \cA_{[\ba]}\cdot v$.
Using the formulae \eqref{eqn:cuentas-muno} to \eqref{eqn:cuentas-mtres}, we see that
\begin{align*}
\xij{12}\xij{13}\cdot v &=\lambda\mdos{12}{13}-\mu\fij{13}(\mbox{\footnotesize (23)})^2\mdos{23}{12}\,\mbox{ and}\\
\xij{23}\xij{12}\cdot v &=\mu\fij{23}(\mbox{\footnotesize (13)})^2\mdos{12}{13}+\bigl(\lambda+2\mu\fij{13}(\mbox{\footnotesize (23)})
\fij{23}(\mbox{\footnotesize (13)})\bigr)\mdos{23}{12}.
\end{align*}
Thus, $\dim N[\mbox{\footnotesize (23)(13)}]= 1$ iff $\lambda+\mu\fij{13}(\mbox{\footnotesize (23)})
\fij{23}(\mbox{\footnotesize (13)})=0$, that is iff $v \in \langle m_{\textsf{soc}}\rangle - 0$. In this case,
$$\biggl\{v,\,\xij{23}\cdot v,\,\xij{12}\cdot v,\,\xij{13}\cdot v,\, \xij{12}\xij{13}\cdot v\biggr\}$$
is a basis of $\cA_{[\ba]}\cdot m_{\textsf{soc}}$ by Remark \ref{obs:ciclico-generico}.

Let now $N$ be an arbitrary submodule of $M_{(13)(23)}$. If $\dim N[\mbox{\footnotesize (13)(23)}]= 2$, then  $N = M_{(13)(23)}$.
If $\dim N[\mbox{\footnotesize (13)(23)}]= 0$, then $N\subset M_{(13)(23)}[e]$ by Lemma \ref{le:bounded in the dimension of modules for aij all different}.
But this is not possible since $\ker\xij{13}\cap\ker\xij{23}\cap\ker\xij{12}=0$, what is checked using the formulae
\eqref{eqn:cuentas-muno} to \eqref{eq:mcuatro-c}. It remains the case
$\dim N[\mbox{\footnotesize (13)(23)}]=1$.
By the argument at the beginning of the proof, the lemma follows.
\end{proof}

It is convenient to introduce the following $\cA_{[\ba]}$-modules which we will use in the Section \ref{sec: tipo de rep}.
\begin{definition}\label{def: Wt ext ke by L - a generic}
Let $\bt\in\gA_3$. We denote by $W_{\bt}(\ku_e, L)$ the $\cA_{[\ba]}$-module with basis $\{w_g:g\in\Sn_3\}$ and action given by
\begin{align*}
&w_g\in W_{\bt}(\ku_e, L)[g],& &\xij{ij}\cdot w_g
=\begin{cases}
t_{(ij)} w_{(ij)} & \mbox{ if }g=e,\\
\fij{ij}(g) w_{(ij)g}   & \mbox{ if }g\neq e\mbox{ and }\sgn g=1,\\
w_{(ij)g}  & \mbox{ if }\sgn g=-1.
\end{cases}
\end{align*}
\end{definition}
The well-definition of $W_{\bt}(\ku_e, L)$ follows from the next lemma.
\begin{lema}\label{le: Wt ext ke by L - a generic}
Let $\bt, \tilde{\bt}\in\gA_3$.
\renewcommand{\theenumi}{\alph{enumi}}   \renewcommand{\labelenumi}{(\theenumi)}
\begin{enumerate}
\item\label{item: Wt ext ke by L - a generic: t=0}
If $\bt=(0,0,0)$, then $W_{\bt}(\ku_e, L)\simeq L\oplus\ku_e$.
\smallbreak
\item \label{item: Wt ext ke by L - a generic: t non 0}
If $\bt\neq(0,0,0)$, then there exists $v\in M_{(13)(23)}[e] - 0$ such that $W_{\bt}(\ku_e, L)\simeq\cA_{[\ba]}\cdot v$.
\smallbreak
\item \label{item: Wt ext ke by L - a generic: t non 0 reciprocal}
If $v\in M_{(13)(23)}[e] - 0$, then there exists $\bt\neq(0,0,0)$ such that $W_{\bt}(\ku_e, L)\simeq\cA_{[\ba]}\cdot v$.
\smallbreak
\item \label{item: Wt ext ke by L - a generic: is an ext}
$W_{\bt}(\ku_e, L)$ is an extension of $\ku_e$ by $L$.
\smallbreak
\item \label{item: Wt ext ke by L - a generic: iso}
$W_{\bt}(\ku_e, L)\simeq W_{\tilde{\bt}}(\ku_e, L)$ if and only if
$\bt=\mu\tilde{\bt}$ with $\mu\in\ku^\times$.
\end{enumerate}
\end{lema}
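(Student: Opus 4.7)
The plan is to mirror the proof of Lemma \ref{le: Wt ext L by ke - a generic}, exchanging $M_e$ for $M_{(13)(23)}$ and using Lemma \ref{le:submodules Mg in the generic case} in place of Lemma \ref{le:submodules Me in the generic case}. Logically I will handle (b) first, deduce (d) immediately, then (c), then (a), and finally (e).

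For (b), given $\bt\neq(0,0,0)$, I parameterize $v=\lambda m_1+\mu\mcuatro\in M_{(13)(23)}[e]$ with $\lambda+\mu\fij{13}(\mbox{\footnotesize (23)})\fij{23}(\mbox{\footnotesize (13)})\neq 0$ (equivalently $v\notin\langle m_{\textsf{soc}}\rangle$); by Lemma \ref{le:submodules Mg in the generic case} the submodule $\cA_{[\ba]}\cdot v$ is then $6$-dimensional with one-dimensional isotypic components. I set $w_e:=v$, define $w_{(ij)}\in(\cA_{[\ba]}\cdot v)[\mbox{\footnotesize (ij)}(13)(23)]$ as a suitable rescaling of $\xij{ij}\cdot v$ (computed via \eqref{eq:action-Verma-uno} and \eqref{eq:mcuatro-b}--\eqref{eq:mcuatro-c}), and complete the basis by 3-cycle weight vectors obtained as further products. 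Reading off scalars yields a linear map $(\lambda,\mu)\mapsto\bt(\lambda,\mu)$, and an inspection shows the image covers $\gA_3-\{(0,0,0)\}$ up to the natural $\ku^\times$-scaling, settling both (b) and (c). Part (d) is then immediate from Lemma \ref{le:submodules Mg in the generic case}: $\cA_{[\ba]}\cdot m_{\textsf{soc}}\simeq L$ embeds into $\cA_{[\ba]}\cdot v$ with quotient $\ku_e$.

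For (a), when $\bt=(0,0,0)$, $w_e$ is annihilated by every $\xij{ij}$, so $\langle w_e\rangle\simeq\ku_e$ is a direct summand; the complement $\langle w_g:g\neq e\rangle$ (with the convention that $w_{(ij)g}=0$ whenever $(ij)g=e$, matching the $L$-action \eqref{eq:def-L} where $\fij{ij}((ij))=0$) becomes $L$ after a diagonal rescaling $w_g=\mu_g v_g$ with $\mu_{(ij)g}=\mu_g/\fij{ij}(g)$ for $g$ a 3-cycle, a consistent assignment by \eqref{eq:fij-propiedad}. For (e), any isomorphism $F\colon W_{\bt}(\ku_e,L)\to W_{\tilde\bt}(\ku_e,L)$ is diagonal in the $\ku^{\Sn_3}$-weight basis with scalars $\mu_g$, restricts to an automorphism of the unique simple submodule $L$ (forcing $\mu_g=\mu_L$ for all $g\neq e$), and comparing $F(\xij{ij}\cdot w_e)=\xij{ij}\cdot F(w_e)$ gives $\bt=(\mu_L/\mu_e)\tilde\bt$; the converse is straightforward.

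The main obstacle will be the bookkeeping in (b): identifying the right linear combinations of monomials of $\B$ that play the roles of the $w_{(ij)}$ and of the 3-cycle weight vectors inside $\cA_{[\ba]}\cdot v$, and verifying that the three scalars $t_{(ij)}(\lambda,\mu)$ so produced can be made to realize any $\bt\neq(0,0,0)$. This is a concrete but somewhat lengthy computation that relies on \eqref{eq:mcuatro-b}--\eqref{eq:mcuatro-c} together with the quadratic relations \eqref{eq: rel 12 13 12}--\eqref{eq: rel 23 12 13}, and it implicitly certifies that Definition \ref{def: Wt ext ke by L - a generic} does define an $\cA_{[\ba]}$-module.
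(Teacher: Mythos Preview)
Your overall strategy is the paper's: realize $W_{\bt}(\ku_e,L)$ as a cyclic submodule $\cA_{[\ba]}\cdot v$ of $M_{(13)(23)}$ via Lemma~\ref{le:submodules Mg in the generic case}, then read off parts (c), (d), and argue (e) exactly as in Lemma~\ref{le: Wt ext L by ke - a generic}. Parts (a), (d), (e) in your sketch are fine and coincide with the paper's treatment.

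There is, however, a concrete weight-space error in your handling of (b). You write $v=\lambda m_1+\mu\mcuatro\in M_{(13)(23)}[e]$, but by \eqref{eq:action-Verma-group-uno}--\eqref{eq:action-Verma-group-letras} one has $m_1,\mcuatro\in M_{(13)(23)}[(13)(23)]$; the space $M_{(13)(23)}[e]$ is spanned by $\mdos{13}{12}$ and $\mdos{12}{23}$ (cf.\ \eqref{eq:isotypic} and \eqref{eq:isotypic-gral}). Consequently your assignment $w_e:=v$ places $w_e$ in the wrong isotypic component, the side condition $v\notin\langle m_{\textsf{soc}}\rangle$ becomes irrelevant (it pertains to the $(13)(23)$-component, and every nonzero $v\in M_{(13)(23)}[e]$ already generates a $6$-dimensional submodule by Lemma~\ref{le:submodules Mg in the generic case}), and the proposed $w_{(ij)}\in(\cA_{[\ba]}\cdot v)[(ij)(13)(23)]$ land in the wrong weight spaces as well. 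The paper avoids this by fixing the $L$-part once and for all as the socle, $w_{(13)(23)}=m_{\textsf{soc}}$ with $w_{(ij)}$ the corresponding rescalings of $\xij{kl}\cdot m_{\textsf{soc}}$, and then choosing directly
\[
w_e=-\tij{12}\,\mdos{13}{12}+\tij{13}\,\mdos{12}{23}\in M_{(13)(23)}[e],
\]
which immediately gives $\cA_{[\ba]}\cdot w_e\simeq W_{\bt}(\ku_e,L)$ and makes the surjectivity in (b) and (c) transparent without inverting a map $(\lambda,\mu)\mapsto\bt(\lambda,\mu)$. Your approach can be repaired by taking $v\in\langle\mdos{13}{12},\mdos{12}{23}\rangle$ and proceeding as you outline; once corrected it is essentially the paper's argument run in the opposite direction.
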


\begin{proof}
\eqref{item: Wt ext ke by L - a generic: t=0} is immediate. If we prove \eqref{item: Wt ext ke by L - a generic: t non 0}, then \eqref{item: Wt ext ke by L - a generic: is an ext} follows from Lemma \ref{le:submodules Mg in the generic case}.

\eqref{item: Wt ext ke by L - a generic: t non 0} We set $w_{(13)(23)}=m_{\textsf{soc}}\in M_{(13)(23)}[\mbox{\footnotesize \mbox{\footnotesize (13)}(23)}]$,
\begin{align*}
&w_{(23)}=\frac{\xij{13}\cdot w_{(13)(23)}}{\fij{13}(\mbox{\footnotesize (13)(23)})},\, w_{(13)}=\frac{\xij{12}\cdot w_{(13)(23)}}{\fij{12}(\mbox{\footnotesize (13)(23)})},\quad w_{(12)}=\frac{\xij{23}\cdot w_{(13)(23)}}{\fij{23}(\mbox{\footnotesize (13)(23)})},
\end{align*}
$w_{(23)(13)}=\xij{23}\xij{12}\cdot w_{(13)(23)}$ and $w_e=-\tij{12}\mdos{13}{12}+\tij{13}\mdos{12}{23}\neq0$
Using the formulae \eqref{eqn:cuentas-muno} to \eqref{eqn:cuentas-mtres}, it is not difficult to see that $W_{\bt}(\ku_e, L)\simeq\cA_{[\ba]}\cdot w_{e}$.
\eqref{item: Wt ext ke by L - a generic: t non 0 reciprocal} follows using the formulae \eqref{eqn:cuentas-muno} to \eqref{eqn:cuentas-mtres}.
The proof of \eqref{item: Wt ext ke by L - a generic: iso} is similar to the proof of Lemma \ref{le: Wt ext L by ke - a generic} \eqref{item: Wt ext L by ke - a generic: iso}.
\end{proof}

\bigbreak

\begin{thm}\label{thm:simples in the generic case}
Let $\ba\in\gA_3$ be generic. There are exactly $2$ simple $\cA_{[\ba]}$modules up to isomorphism, namely $\ku_e$ and $L$.
Moreover, $M_e$ is the projective cover, and the injective hull, of $\ku_e$; also, $M_{(13)(23)}$ is the projective cover, and the injective hull, of $L$.
\end{thm}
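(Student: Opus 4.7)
The plan is to combine Proposition \ref{pr:induced}, Lemma \ref{le:bounded in the dimension of modules for aij all different}, and the submodule lattices of $M_e$ and $M_{(13)(23)}$ computed in Lemmas \ref{le:submodules Me in the generic case} and \ref{le:submodules Mg in the generic case}. The classification of simples reduces to finding all simple quotients of finitely many Verma modules, which by the already established lattice structure is immediate. The projective cover and injective hull statements then follow from general Frobenius-theoretic considerations once indecomposability of the relevant Vermas is observed.

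First I would recall that, by Proposition \ref{pr:induced}, every finite-dimensional simple $\cA_{[\ba]}$-module arises as a quotient of some Verma module $M_g$, $g\in\Sn_3$. Since $\ba$ is generic, Lemma \ref{le:bounded in the dimension of modules for aij all different}\,\eqref{item:lema-generic-b} gives $M_g\simeq M_{(13)(23)}$ for every $g\neq e$. Hence it suffices to enumerate the simple quotients of $M_e$ and of $M_{(13)(23)}$. By Lemma \ref{le:submodules Me in the generic case}, the submodule $N_e$ is the unique maximal submodule of $M_e$ and $M_e/N_e\simeq\ku_e$; by Lemma \ref{le:submodules Mg in the generic case}, $N_{(13)(23)}$ is the unique maximal submodule of $M_{(13)(23)}$ and $M_{(13)(23)}/N_{(13)(23)}\simeq L$. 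This shows that the only isomorphism classes of simple modules are $\ku_e$ and $L$.

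For the second part, I would observe that $\ku^{\Sn_3}$ is semisimple, so by the last item of Proposition \ref{pr:induced} each Verma module $M_g$ is both projective and injective in $\cA_{[\ba]}$-mod. The lattice descriptions in Lemmas \ref{le:submodules Me in the generic case} and \ref{le:submodules Mg in the generic case} show that $M_e$ has a unique maximal submodule $N_e$ and a unique minimal submodule $\langle\mcuatro\rangle\simeq\ku_e$; likewise $M_{(13)(23)}$ has unique maximal submodule $N_{(13)(23)}$ and unique minimal submodule $\cA_{[\ba]}\cdot m_{\textsf{soc}}\simeq L$. In particular, both $M_e$ and $M_{(13)(23)}$ are indecomposable (a nontrivial decomposition would produce at least two maximal, or two minimal, submodules). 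An indecomposable projective module is the projective cover of its top, and an indecomposable injective module is the injective hull of its socle; applying these standard facts together with the identification of top and socle from the two lemmas yields the claims about $M_e$ and $M_{(13)(23)}$.

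I do not expect any genuine obstacle: all the hard work has already been done in Lemma \ref{le:bounded in the dimension of modules for aij all different} and in the two submodule-lattice lemmas. The only delicate point is to make the indecomposability argument cleanly, which is why I phrase it via the uniqueness of the maximal (equivalently, minimal) submodule rather than by invoking the general theory of primitive idempotents in a Frobenius algebra; once this is done, the projective-cover/injective-hull assertions are formal.
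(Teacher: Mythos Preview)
Your proof is correct. The classification of simples is handled exactly as in the paper: reduce to the two Verma modules $M_e$ and $M_{(13)(23)}$ via Proposition \ref{pr:induced} and Lemma \ref{le:bounded in the dimension of modules for aij all different}\,\eqref{item:lema-generic-b}, then read off the unique simple quotients from Lemmas \ref{le:submodules Me in the generic case} and \ref{le:submodules Mg in the generic case}.

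For the projective cover / injective hull statements you take a different route than the paper. The paper argues via idempotents: knowing there are two simples of dimensions $1$ and $5$, a complete set of primitive orthogonal idempotents of $\cA_{[\ba]}$ has exactly $1+5=6$ elements \cite[(6.8)]{CR}; since the six $\delta_g$ are already orthogonal idempotents summing to $1$, they are forced to be primitive, and then \cite[(9.9)]{CR} identifies each $M_g\simeq\cA_{[\ba]}\delta_g$ as the projective cover (and injective hull) of its top (and socle). Your argument instead extracts indecomposability of $M_e$ and $M_{(13)(23)}$ directly from the uniqueness of their maximal (or minimal) submodules in the lattice lemmas, and then invokes the standard fact that an indecomposable projective (resp.\ injective) is the projective cover of its top (resp.\ injective hull of its socle). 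Your approach is slightly more self-contained, since it uses only the submodule lattices already computed and avoids the idempotent count; the paper's approach has the mild advantage of showing simultaneously that \emph{every} $\delta_g$ is primitive, not just those for $g=e$ and $g=(13)(23)$.
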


\begin{proof}
We know that $\ku_e$ and $L$ are the only two simple $\cA_{[\ba]}$-modules up to isomorphism
by Proposition \ref{pr:induced} and Lemmata \ref{le:bounded in the dimension of modules for aij all different} \eqref{item:lema-generic-b},
\ref{le:submodules Me in the generic case} and \ref{le:submodules Mg in the generic case}. Hence, a set of primitive orthogonal idempotents has at most 6 elements \cite[(6.8)]{CR}. Since the $\delta_g$, $g\in \Sn_3$ are orthogonal idempotents, they must be primitive. Therefore
 $M_e$ and $M_{(13)(23)}$ are  the projective covers (and the injective hulls) of $\ku_e$ and $L$, respectively by \cite[(9.9)]{CR}, see page \pageref{bullet:injective-hull}.
\end{proof}

\subsection{Case $\ba\in\gA_3$ sub-generic.}\label{subsec: non generic case}

\

Through this subsection, we suppose that $\aij{12}\neq\aij{13}=\aij{23}$. Then the equivalence classes of $\Sn_3$ by $\link$ are
\begin{align*}
&\{e\}, & &\{(12)\} & &\text{and } \{(13), (23), (13)(23), (23)(13)\}.
\end{align*}
In fact,
\begin{itemize}
\item $e$ and $(12)$ belong to the isotropy group $\Sn_3^{\ba}$.
\smallbreak

\item  $\mbox{\footnotesize (13)}=\mbox{\footnotesize (23)(12)(23)}$ with $\fij{12}(\mbox{\footnotesize (23)})=\aij{12}-\aij{13}\neq0$ and\\ $\fij{23}(\mbox{\footnotesize (12)(23)})=\aij{23}-\aij{12}\neq0$.

\smallbreak

\item $\mbox{\footnotesize (123)} = \mbox{\footnotesize (13)(23)}$ with $\fij{13}(\mbox{\footnotesize (23)})=\aij{13}-\aij{12}\neq0$.

\smallbreak

\item $\mbox{\footnotesize (132)} =\mbox{\footnotesize (23)(13)}$ with $\fij{23}(\mbox{\footnotesize (13)})=\aij{23}-\aij{12}\neq0$.
\end{itemize}

\smallbreak

To determine the simple $\cA_{[\ba]}$-modules, we proceed as in the subsection above; that is, we just need to
determine the maximal submodules of the Verma modules $M_{e}$, $M_{(12)}$ and $M_{(13)(23)}$,
see Proposition \ref{prop: g h linked then the Verma are isomorphic}.

\smallbreak

Let $M$ be a cyclic $\cA_{[\ba]}$-module generated by $v\in M[\mbox{\footnotesize (13)(23)}]$. Here again, we can describe the weight spaces of $M$. By \eqref{eq:action-monomials} and acting by the  monomials in our basis, we see that
$M[\mbox{\footnotesize (23)(13)}]=\langle \xij{13}\xij{23}\cdot v, \xij{23}\xij{12}\cdot v, \xij{12}\xij{13}\cdot v\rangle$.
This weight space is $\neq 0$ by Remark \ref{obs:fij} applied to $(13)(23)\link(23)(13)$,
and a further application of this Remark gives the following result.

\begin{Rem}\label{obs:ciclico-no-generico}
Let $M$ be a cyclic $\cA_{[\ba]}$-module generated by $v\in M[\mbox{\footnotesize (13)(23)}]$.
If $\dim M[\mbox{\footnotesize (23)(13)}]= 1$, then
\begin{equation}\label{eq:module generated by 1 element in the non generic case}
\begin{aligned}
M[e] =& \langle\xij{23}\xij{13}\cdot v, (\xij{12}\xij{23})\cdot v, \xij{13}\xij{12}\cdot v\rangle, & M[\mbox{\footnotesize (13)(23)}]=\langle &v\rangle,\\
M[&\mbox{\footnotesize (12)}]=\langle \xij{23}\cdot v, (\xij{13}\xij{12}\xij{13})\cdot v\rangle, & M[\mbox{\footnotesize (23)}]=\langle\xij{13}\cdot &v\rangle,
\\ M[\mbox{\footnotesize (23)(13)}]&=\langle \xij{12}\xij{13}\cdot v\rangle,
 & M[\mbox{\footnotesize (13)}]=\langle\xij{12}\cdot &v\rangle.
\end{aligned}
\end{equation}
\end{Rem}

There is a simple module $L$ like this; $\{v_{(13)}, v_{(23)}, v_{(13)(23)}, v_{(23)(13)}\}$ is a basis of $L$ and the action is given by
\begin{align}\label{eq:def-Ltilde}
v_g&\in L[g], &
\xij{ij}\cdot v_g
=\begin{cases}
0 & \mbox{ if }g=(ij)\\
m_{(ij)g} & \mbox{ if }g\neq(ij),\,\sgn g=-1,\\
\fij{ij}(g) m_{(ij)g}& \mbox{ if }\sgn g=1.\\
\end{cases}
\end{align}
Let $\ku_{(12)}$ and $\ku_e$ be as in Lemma \ref{prop:dim-uno}. We shall see that $L$, $\ku_{(12)}$ and $\ku_e$ are the only simple modules of $\cA_{[\ba]}$.

\smallbreak
The Verma module $M_e$ projects onto the simple module $\ku_e$, hence the kernel of this projection is a maximal submodule; explicitly this is $$N_e=\cA_{[\ba]}\cdot \left(M_e[\mbox{\footnotesize (13)(23)}]\oplus M_e[\mbox{\footnotesize (12)}]\right)=\oplus_{g\link(13)(23)}M_e[g]\oplus M_e[\mbox{\footnotesize (12)}]\oplus\langle\mcuatro\rangle.$$

We see that this is the unique maximal submodule, as consequence of the following description of all submodules of $M_e$.
\begin{lema}\label{le:submodules Me in the non generic case}
The lattice of (proper, non-trivial) submodules of $M_{e}$ is displayed in \eqref{eq:lattice-Me-nongeneric},
where $v$ and $w$ satisfy $$M_e[\mbox{\rm\footnotesize (13)(23)}]=\langle v, \mdos{23}{12}\rangle, \qquad M_e[\mbox{\rm\footnotesize (12)}]=\hspace{-3pt}\langle w,\mtres{13}{12}{23}\rangle.$$ The submodules $\cA_{[\ba]}\cdot v$ (resp. $\cA_{[\ba]}\cdot w$) and $\cA_{[\ba]}\cdot v_1$ (resp. $\cA_{[\ba]}\cdot w_1$) coincide iff $v\in \langle v_1\rangle$ (resp. $w\in \langle w_1\rangle$). The labels on the arrows indicate the quotient of the module on top by the module on the bottom.
\end{lema}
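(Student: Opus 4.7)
The strategy parallels Lemma~\ref{le:submodules Me in the generic case}, but the new ingredient is the isolated $\link$-class $\{(12)\}$, which gives rise to a second family of cyclic submodules and enriches the lattice. Throughout I would work with the $g=e$ specialization of the action formulae \eqref{eqn:cuentas-muno}--\eqref{eq:mcuatro-c}.

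First, I would record that in the sub-generic case \eqref{eq:fij-3} gives: $\fij{13}$ is supported on $\{(23),(132)\}$, $\fij{23}$ on $\{(13),(132)\}$, $\fij{12}$ on $\{(13),(23),(123),(132)\}$, and $\Omega(e)=0$ by \eqref{eq:omega}. Substituting $g=e$ into \eqref{eq:mcuatro-b}--\eqref{eq:mcuatro-c} and using $\fij{ij}(e)=0$ together with $\Omega(e)=0$ yields $\xij{ij}\cdot\mcuatro=0$ for every $(ij)$, so $\langle\mcuatro\rangle\simeq\ku_e$ is a submodule.

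Second, I would compute the cyclic submodules generated by a general $v=\lambda\mdos{23}{12}+\mu\mdos{12}{13}\in M_e[\mbox{\footnotesize (13)(23)}]-0$ and by a general $w=\alpha \mij{12}+\beta\mtres{13}{12}{23}\in M_e[\mbox{\footnotesize (12)}]-0$, using \eqref{eqn:cuentas-muno}--\eqref{eqn:cuentas-mtres}. For $v$, the argument of Remark~\ref{obs:ciclico-no-generico} gives $\dim(\cA_{[\ba]}\cdot v)[\mbox{\footnotesize (23)(13)}]=1$, so the cluster weight spaces $M_e[h]$ with $h\link(13)(23)$ each intersect $\cA_{[\ba]}\cdot v$ in a single line; together with $\langle\mcuatro\rangle$ this produces a submodule of the expected dimension. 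The distinguished choice $v=\mdos{23}{12}$ gives the smallest such, because then $\xij{23}\cdot v = \fij{23}((12))\mij{12} = 0$, so no contribution from $M_e[\mbox{\footnotesize (12)}]$ enters. For $w$, one checks that $\xij{12}\cdot\mtres{13}{12}{23}=-\mcuatro$ and $\xij{13}\cdot\mtres{13}{12}{23}=\xij{23}\cdot\mtres{13}{12}{23}=0$ using $\fij{13}((123))=\fij{23}((132))=\Omega(e)=0$, so $\cA_{[\ba]}\cdot\mtres{13}{12}{23}=\langle\mtres{13}{12}{23},\mcuatro\rangle$. A general $w$ generates a larger submodule that meets the cluster weight spaces in the same cyclic fashion as $v$ does.

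Third, I would argue that any proper nonzero submodule $N\subseteq M_e$ satisfies $N\cap M_e[e]\subseteq\langle\mcuatro\rangle$, since $m_1$ generates $M_e$. By \eqref{eq:action-monomials} and the facts of the previous step, $N$ is completely determined by the two lines $N\cap M_e[\mbox{\footnotesize (13)(23)}]$ and $N\cap M_e[\mbox{\footnotesize (12)}]$, which may each be $0$, a distinguished line $\langle\mdos{23}{12}\rangle$ or $\langle\mtres{13}{12}{23}\rangle$, a generic line, or the full weight space. Pairing the possibilities and reading off the quotients from the explicit bases then assembles the lattice \eqref{eq:lattice-Me-nongeneric} and identifies the labels of the arrows in terms of the simple modules $\ku_e$, $\ku_{(12)}$ and $L$.

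\textbf{Main obstacle.} The delicate part is showing that the contributions from $M_e[\mbox{\footnotesize (13)(23)}]$ and from $M_e[\mbox{\footnotesize (12)}]$ are essentially independent inside $M_e$, so that the lattice does have the anticipated bi-axial shape rather than a degeneration. Concretely, one must verify that applying $\xij{23}\xij{13}$, $\xij{12}\xij{23}$, $\xij{13}\xij{12}$ to a pure $w\in M_e[\mbox{\footnotesize (12)}]$ lands inside $\langle\mcuatro\rangle$ and not in a larger part of $M_e[e]$, and symmetrically that the cluster-generated submodules intersect $M_e[\mbox{\footnotesize (12)}]$ only through the distinguished line $\langle\mtres{13}{12}{23}\rangle$ (if at all). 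Both facts rest on the precise vanishing pattern of $\fij{ij}$, $\Omega$ in the sub-generic case isolated in the first step.
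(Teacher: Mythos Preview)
Your plan is correct and mirrors the paper's proof: compute the cyclic submodules generated by elements of $M_e[\mbox{\footnotesize (13)(23)}]$ and $M_e[\mbox{\footnotesize (12)}]$, show (via Remark~\ref{obs:fij}, which is the engine behind your third step) that every proper submodule is generated by its components in these two weight spaces, and read off the lattice. One small caveat: Remark~\ref{obs:ciclico-no-generico} \emph{assumes} the $(23)(13)$-component is one-dimensional rather than proving it, so you still need the explicit check (as in the generic case) that the three vectors $\xij{ij}\xij{kl}\cdot v\in M_e[\mbox{\footnotesize (23)(13)}]$ are proportional; note also a slip in your stated support of $\fij{23}$, which is $\{(13),(123)\}$, consistent with the vanishing $\fij{23}((132))=0$ you invoke later.
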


\begin{equation}\label{eq:lattice-Me-nongeneric}
\xymatrix{ & N_{e}  \ar@{-}[1, -1]_{\ku_{(12)}}\ar@{-}[1, 1]^L &  \\
\cA_{[\ba]}\cdot M_{e}[\mbox{\rm\footnotesize (13)(23)}]\ar@{-}[d]_L \ar@{-}[1, 1]^L&
& \cA_{[\ba]}\cdot M_{e}[\mbox{\rm\footnotesize (12)}]
\ar@{-}[d]^{\ku_{(12)}} \ar@{-}[1, -1]_{\ku_{(12)}}\\
\cA_{[\ba]}\cdot v \ar@{-}[d]_L & \ar@{-}[1, -1]^L \hspace{-3pt}\cA_{[\ba]}\cdot\langle\mtres{13}{12}{23},\mdos{23}{12}\rangle
\ar@{-}[1, 1]_{\ku_{(12)}}
& \cA_{[\ba]}\cdot w \ar@{-}[d]^{\ku_{(12)}} \\
\cA_{[\ba]}\cdot \mtres{13}{12}{23} \quad \ar@{-}[1, 1]_{\ku_{(12)}} & & \cA_{[\ba]}\cdot\mdos{23}{12}\ar@{-}[1, -1]^{L}\\
& \langle\mcuatro\rangle & }
\end{equation}

\begin{proof}
Let
\begin{align*}
v&=\lambda\mdos{23}{12}+\mu\mdos{12}{13} & &\in M_e[\mbox{\footnotesize \mbox{\footnotesize (13)}(23)}] - 0,\\
\tilde{v} &=\mu\mdos{12}{23}+(\mu-\lambda)\mdos{13}{12} & &\in M_e[\mbox{\footnotesize (23)(13)}].
\end{align*}
Using the formulae \eqref{eqn:cuentas-muno} to \eqref{eqn:cuentas-mtres}, we see that $\xij{23}\xij{12}\cdot v$ and $\xij{12}\xij{13}\cdot v$ are non-zero multiples of $\tilde{v}$.  That is, $\dim (\cA_{[\ba]}\cdot v)[\mbox{\footnotesize (23)(13)}] = 1$. Moreover,
$\xij{12}\xij{23}\cdot v=-\mu\mcuatro$ and $\xij{13}\xij{12}\cdot v=\lambda\mcuatro$; and
$\xij{23}\cdot v$ and $(\xij{13}\xij{12}\xij{13})\cdot v$ are non-zero multiples of $\mu\mtres{13}{12}{23}$.
By Remark \ref{obs:ciclico-no-generico}, we obtain a basis for $\cA_{[\ba]}\cdot v$:
\begin{align}\label{eq:submodules Me in the non generic case 2}
\biggl\{v,\,\xij{12}\cdot v,\,\xij{13}\cdot v,\, \tilde{v},\, \mcuatro,\,\mu\mtres{13}{12}{23}\biggr\};
\end{align}
if $\mu=0$, we obviate the last vector.

\smallbreak

By \eqref{eq:mcuatro-a}, \eqref{eq:mcuatro-b} and \eqref{eq:mcuatro-c}, $\xij{ij}\cdot\mcuatro=0$ for all $(ij)\in\cO_2^3$. Then $$\cA_{[\ba]}\cdot\mcuatro=\langle\mcuatro\rangle$$
and $\cA_{[\ba]}\cdot u=\cA_{[\ba]}\cdot m_1=M_e$ if $u\in M_e[e]$ is linearly independent to $\mcuatro$.

\smallbreak

By \eqref{eq:13 act 13 12 23}, \eqref{eq:23 act 13 12 23} and \eqref{eqn:cuentas-mtres}, $\xij{ij}\cdot\mtres{13}{12}{23}=-\delta_{(12)}(\mbox{\footnotesize (ij)})\mcuatro$ for all $(ij)\in\cO_2^3$. Then
$$\cA_{[\ba]}\cdot\mtres{13}{12}{23}=\langle\mcuatro, \mtres{13}{12}{23}\rangle.$$
By \eqref{eq:action-Verma-letras1}, \eqref{eqn:13 en 12} and \eqref{eqn:23 en 12}, $\xij{ij}\cdot\mij{12}=\delta_{(13)}(\mbox{\footnotesize(ij)})\mdos{13}{12}+\delta_{(23)}(\mbox{\footnotesize(ij)})\mdos{23}{12}$ for all $(ij)\in\cO_2^3$. Then
$$\cA_{[\ba]}\cdot w=\cA_{[\ba]}\cdot\mdos{23}{12}\oplus\langle w\rangle$$
by \eqref{eq:submodules Me in the non generic case 2} and Remark \ref{obs:fij}, if $w\in M_e[\mbox{\footnotesize (12)}]$ is linearly independent to $\mtres{13}{12}{23}$.

\smallbreak

Let now $N$ be a (proper, non-trivial) submodule of $M_e$ which is not $\langle\mcuatro\rangle$. We set $\widetilde{N} =\cA_{[\ba]}\cdot N[\mbox{\footnotesize (12)}]+ \cA_{[\ba]}\cdot N[\mbox{\footnotesize (13)(23)}]$. Then $\widetilde{N}[g]=N[g]$ for all $g\neq e$ by Remark \ref{obs:fij}. By the argument at the beginning of the proof, $\langle\mcuatro\rangle\subset\widetilde{N}$. Then $\widetilde{N}[e]=\langle\mcuatro\rangle=N[e]$ because otherwise $N=M_e$. Therefore $N=\widetilde{N}$ . To finish, we have to calculate the submodules of $M_e$ generated by homogeneous subspaces of $M_e[\mbox{\footnotesize (12)}]\oplus M_e[\mbox{\footnotesize (13)(23)}]$; this follows from the argument at the beginning of the proof.
\end{proof}

\smallbreak

The Verma module $M_{(13)(23)}$ projects onto the simple module $L$, hence the kernel of this projection is a maximal submodule; explicitly this is
\begin{align*}
N_{(13)(23)} &= \cA_{[\ba]}\cdot\left(M_{(13)(23)}[e]\oplus M_{(13)(23)}[\mbox{\footnotesize (12)}]\right)\\
&=M_{(13)(23)}[e]\oplus M_{(13)(23)}[\mbox{\footnotesize (12)}]\oplus\cA_{[\ba]}\cdot m_{\textsf{soc}}.
\end{align*}

We see that this is the unique maximal submodule, as consequence of the following description of all submodules of $M_{(13)(23)}$.

\begin{lema}\label{le:submodules Mg in the non generic case}
The lattice of (proper, non-trivial) submodules of $M_{(13)(23)}$ is
$$
\xymatrix{ & N_{(13)(23)}  \ar@{-}[1, -1]_{\ku_{(12)}} \ar@{-}[1, 1]^{\ku_\epsilon} &  \\
\cA_{[\ba]}\cdot M_{(13)(23)}[e]\ar@{-}[d]_{\ku_e} \ar@{-}[1, 1]^{\ku_e} & &
\cA_{[\ba]}\cdot M_{(13)(23)}[\mbox{\rm\footnotesize (12)}] \ar@{-}[d]^{\ku_{(12)}}\ar@{-}[1, -1]_{\ku_{(12)}}\\
\cA_{[\ba]}\cdot v \ar@{-}[d]_{\ku_e} &
\ar@{-}[1, -1]^{\ku_{e}}\cA_{[\ba]}\cdot\langle m_{\textsf{o}},\mdos{12}{23}\rangle\ar@{-}[1, 1]_{\ku_{(12)}}
& \cA_{[\ba]}\cdot w \ar@{-}[d]^{\ku_{(12)}} \\
\cA_{[\ba]}\cdot m_{\textsf{o}} \quad \ar@{-}[1, 1]_{\ku_{(12)}} & & \cA_{[\ba]}\cdot\mdos{12}{23}\ar@{-}[1, -1]^{\ku_e}\\
& \cA_{[\ba]}\cdot m_{\textsf{soc}} & }
$$
Here $v$ and $w$ satisfy $M_{(13)(23)}[e]=\langle v,\mdos{12}{23}\rangle$,  $M_{(13)(23)}[(12)]=\langle w, m_{\textsf{o}}\rangle$.
The submodules $\cA_{[\ba]}\cdot v$ (resp. $\cA_{[\ba]}\cdot w$) and $\cA_{[\ba]}\cdot v_1$ (resp. $\cA_{[\ba]}\cdot w_1$) coincide iff $v\in \langle v_1\rangle$ (resp. $w\in \langle w_1\rangle$). The labels on the arrows indicate the quotient of the module on top by the module on the bottom.
\end{lema}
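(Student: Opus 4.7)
The plan is to mirror the strategy of Lemma \ref{le:submodules Me in the non generic case} (which itself mirrors Lemma \ref{le:submodules Mg in the generic case}), adapted to $M_{(13)(23)}$ in place of $M_e$. In the sub-generic regime the $\link$-classes in $\Sn_3$ are $\{e\}$, $\{(12)\}$ and $\mathcal{C}=\{(13),(23),(13)(23),(23)(13)\}$, and by \eqref{eq:isotypic-gral} the top weight space is $M_{(13)(23)}[\mbox{\footnotesize (13)(23)}]=\langle m_1,\mcuatro\rangle$. Remark \ref{obs:fij} implies that any submodule $N\subseteq M_{(13)(23)}$ is determined by the three pieces $N[\mbox{\footnotesize (13)(23)}]$, $N[e]$ and $N[\mbox{\footnotesize (12)}]$ and admits the decomposition $N=\cA_{[\ba]}\cdot N[\mbox{\footnotesize (13)(23)}]+\cA_{[\ba]}\cdot N[e]+\cA_{[\ba]}\cdot N[\mbox{\footnotesize (12)}]$. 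The proof therefore splits into analyzing cyclic (and two-generator) submodules generated from each of these three independent weight spaces.

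For generators $z=\lambda m_1+\mu\mcuatro$ in the top weight, the computation of Lemma \ref{le:submodules Mg in the generic case} applies verbatim: $\dim(\cA_{[\ba]}\cdot z)[\mbox{\footnotesize (23)(13)}]=1$ iff $z\in\langle m_{\textsf{soc}}\rangle$. Combining this with Remark \ref{obs:ciclico-no-generico} and \eqref{eq:mcuatro-a}--\eqref{eq:mcuatro-c}, I would verify that $\cA_{[\ba]}\cdot m_{\textsf{soc}}$ is $4$-dimensional and isomorphic to the simple $L$ of \eqref{eq:def-Ltilde}; for $z\notin\langle m_{\textsf{soc}}\rangle$, the cyclic submodule $\cA_{[\ba]}\cdot z$ saturates to $N_{(13)(23)}$ with $M_{(13)(23)}/N_{(13)(23)}\simeq L$. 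For generators $v\in M_{(13)(23)}[e]-0$ and $w\in M_{(13)(23)}[\mbox{\footnotesize (12)}]-0$, written in coordinates with respect to the bases $\{\mdos{13}{12},\mdos{12}{23}\}$ and $\{\mij{23},\mtres{13}{12}{13}\}$, I would then use the action formulae \eqref{eq:action-Verma-letras1}--\eqref{eqn:cuentas-mtres} to characterize $\mdos{12}{23}$ and $m_{\textsf{o}}$ (up to scalar) as the unique lines for which $\cA_{[\ba]}\cdot v$ (resp.\ $\cA_{[\ba]}\cdot w$) collapses onto $\cA_{[\ba]}\cdot m_{\textsf{soc}}+\langle v\rangle$ (resp.\ $+\langle w\rangle$). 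These yield the $5$-dimensional extensions $\cA_{[\ba]}\cdot\mdos{12}{23}$ of $L$ by $\ku_e$ and $\cA_{[\ba]}\cdot m_{\textsf{o}}$ of $L$ by $\ku_{(12)}$ at the bottom of the diagram; for generic $v$ and $w$, the cyclic submodules grow by one dimension, and summing them assembles the middle rung $\cA_{[\ba]}\cdot\langle m_{\textsf{o}},\mdos{12}{23}\rangle$ and the immediate maximal submodules $\cA_{[\ba]}\cdot M_{(13)(23)}[e]$, $\cA_{[\ba]}\cdot M_{(13)(23)}[\mbox{\footnotesize (12)}]$ sitting just below $N_{(13)(23)}$.

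Finally, assembling the lattice amounts to enumerating the triples $(N[\mbox{\footnotesize (13)(23)}],N[e],N[\mbox{\footnotesize (12)}])$ compatible with the above constraints; the composition factors $\ku_e$, $\ku_{(12)}$, $L$ are read off from the explicit basis of each cyclic submodule using Lemma \ref{prop:dim-uno} and \eqref{eq:def-Ltilde}. The main technical obstacle will be the bookkeeping needed to verify exhaustiveness: confirming that the intermediate submodules $\cA_{[\ba]}\cdot v$, $\cA_{[\ba]}\cdot w$, and $\cA_{[\ba]}\cdot\langle m_{\textsf{o}},\mdos{12}{23}\rangle$ are pairwise distinct at the middle level, that their parents $\cA_{[\ba]}\cdot M_{(13)(23)}[e]$ and $\cA_{[\ba]}\cdot M_{(13)(23)}[\mbox{\footnotesize (12)}]$ are as described, and that no two-generator combination straddling the three independent weight spaces gives rise to a submodule absent from the displayed Hasse diagram.
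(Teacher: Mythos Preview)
Your overall plan matches the paper's, but there is one substantive error. You claim that for $z\in M_{(13)(23)}[\mbox{\footnotesize (13)(23)}]$ with $z\notin\langle m_{\textsf{soc}}\rangle$ the cyclic module $\cA_{[\ba]}\cdot z$ ``saturates to $N_{(13)(23)}$''. This is false: since $m_1\in M_{(13)(23)}[\mbox{\footnotesize (13)(23)}]$ and $m_1$ generates the whole Verma module, any such $z$ in fact generates all of $M_{(13)(23)}$. Indeed, you yourself observe that $\dim(\cA_{[\ba]}\cdot z)[\mbox{\footnotesize (23)(13)}]=2$ for such $z$; by Remark~\ref{obs:fij}\eqref{item:rem-fij-b} applied across the $\link$-class $\mathcal{C}$ this forces $(\cA_{[\ba]}\cdot z)[\mbox{\footnotesize (13)(23)}]=M_{(13)(23)}[\mbox{\footnotesize (13)(23)}]\ni m_1$. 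The correct dichotomy (and the one the paper proves) is that every \emph{proper} submodule $N$ has $N[\mbox{\footnotesize (13)(23)}]\subseteq\langle m_{\textsf{soc}}\rangle$, while any $z$ outside that line already generates the whole module.

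This error propagates into your ``enumerate the triples'' step: because $N[\mbox{\footnotesize (13)(23)}]$ is forced to be $0$ or $\langle m_{\textsf{soc}}\rangle$ for proper $N$, the classification genuinely reduces to the pair $(N[e],N[\mbox{\footnotesize (12)}])$, not a triple. The paper makes this reduction explicit by showing that for any nonzero $u\in M_{(13)(23)}[e]\oplus M_{(13)(23)}[\mbox{\footnotesize (12)}]$ one has $0\neq\langle\xij{13}\cdot u,\xij{23}\cdot u\rangle\subset\cA_{[\ba]}\cdot m_{\textsf{soc}}$, hence $\cA_{[\ba]}\cdot m_{\textsf{soc}}\subset\cA_{[\ba]}\cdot u$; it then records the coupling $\langle\xij{12}\cdot v\rangle=\langle m_{\textsf{o}}\rangle$ and $\langle\xij{12}\cdot w\rangle=\langle\mdos{12}{23}\rangle$ for generic $v,w$, which is exactly what produces the two interlocking chains in the Hasse diagram. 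With that correction in place, the rest of your plan goes through and coincides with the paper's argument.
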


\begin{proof}
Let $u=\lambda m_1+\mu\mcuatro\in M_{(13)(23)}[\mbox{\footnotesize (13)(23)}] - 0$.
Using the formulae \eqref{eqn:cuentas-muno} to \eqref{eqn:cuentas-mtres}, we see that
\begin{align*}
\xij{12}\xij{13}\cdot u &=\lambda\mdos{12}{13}-\mu\fij{13}(\mbox{\footnotesize (23)})^2\mdos{23}{12}\,\mbox{ and}\\
\xij{23}\xij{12}\cdot u &=\mu\fij{23}(\mbox{\footnotesize (13)})^2\mdos{12}{13}+\bigl(\lambda+2\mu\fij{13}(\mbox{\footnotesize (23)})
\fij{23}(\mbox{\footnotesize (13)})\bigr)\mdos{23}{12}.
\end{align*}
Thus, $\dim N[\mbox{\footnotesize (23)(13)}]= 1$ iff $\lambda+\mu\fij{13}(\mbox{\footnotesize (23)})
\fij{23}(\mbox{\footnotesize (13)})=0$, that is iff $u\in\langle m_{\textsf{soc}}\rangle-0$. By Remark \ref{obs:ciclico-no-generico},
$$
\cA_{[\ba]}\cdot m_{\textsf{soc}}=\langle m_{\textsf{soc}},\,\xij{12}\cdot m_{\textsf{soc}},\,\xij{13}\cdot m_{\textsf{soc}},\, \xij{12}\xij{13}\cdot m_{\textsf{soc}}\rangle
$$
and $\cA_{[\ba]}\cdot u=\cA_{[\ba]}\cdot m_1 =M_{(13)(23)}$, if $u\in M_{(13)(23)}[\mbox{\footnotesize (13)(23)}]$ is linearly independent to $m_{\textsf{soc}}$.

\smallbreak

By the formulae \eqref{eqn:cuentas-muno} to \eqref{eq:mcuatro-c}, if $u\in\bigl( M_{(13)(23)}[e]\oplus M_{(13)(23)}[\mbox{\footnotesize (12)}]\bigr)-0$, then $0\neq\langle\xij{13}\cdot u, \xij{23}\cdot u\rangle\subset\cA_{[\ba]}\cdot m_{\textsf{soc}}$. Therefore
$$\cA_{[\ba]}\cdot m_{\textsf{soc}}\subset\cA_{[\ba]}\cdot u$$
by Remark \ref{obs:fij}. Also, if $v$ and $w$ satisfy $M_{(13)(23)}[e]=\langle v,\mdos{12}{23}\rangle$ and $M_{(13)(23)}[(12)]=\langle w, m_{\textsf{o}}\rangle$, then
$$
\langle\xij{12}\cdot v\rangle=\langle m_{\textsf{o}}\rangle\quad\mbox{ and }\quad\langle\xij{12}\cdot w\rangle=\langle\mdos{12}{23}\rangle.
$$

\smallbreak

Let now $N$ be a (proper, non-trivial) submodule of $M_{(13)(23)}$ which is not $\cA_{[\ba]}\cdot m_{\textsf{soc}}$. We set $\widetilde{N}= \cA_{[\ba]}\cdot N[e]+\cA_{[\ba]}\cdot N[\mbox{\footnotesize (12)}]$. Then $\widetilde{N}[g]=N[g]$ for $g= e, (12)$ by Remark \ref{obs:fij}.
By the argument at the beginning of the proof, $\cA_{[\ba]}\cdot m_{\textsf{soc}}\subset\widetilde{N}$. Then $\oplus_{g\link(13)(23)}N[g]=\cA_{[\ba]}\cdot m_{\textsf{soc}}=\oplus_{g\link(13)(23)}\tilde{N}[g]$ because otherwise $N=M_{(13)(23)}$. Therefore $N=\widetilde{N}$. To finish, we have to calculate the submodules of $M_{(13)(23)}$ generated by homogeneous subspaces of $M_{(13)(23)}[\mbox{\footnotesize (12)}]\oplus M_{(13)(23)}[e]$; this follows from the argument at the beginning of the proof.
\end{proof}

\smallbreak

The Verma module $M_{(12)}$ projects onto the simple module $\ku_{(12)}$, hence the kernel of this projection is a maximal submodule; explicitly this is
\begin{align*}
N_{(12)} &= \cA_{[\ba]}\cdot\left(M_{(12)}[\mbox{\footnotesize (13)(23)}]\oplus M_{(12)}[e]\right)\\
&=\oplus_{g\link(13)(23)}M_{(12)}[g]\oplus M_{(12)}[e]\oplus\langle\mcuatro\rangle.
\end{align*}

We see that this is the unique maximal submodule, as consequence of the following description of all submodules of $M_{(12)}$.

\begin{lema}\label{le:submodules M12 in the non generic case}
The lattice of (proper, non-trivial) submodules of $M_{(12)}$ is
$$
\xymatrix{ & N_{(12)}  \ar@{-}[1, -1]_{\ku_e}\ar@{-}[1, 1]^L &  \\
\cA_{[\ba]}\cdot M_{(12)}[\mbox{\rm\footnotesize (13)(23)}]\ar@{-}[d]_L \ar@{-}[1, 1]^L & &
\cA_{[\ba]}\cdot M_{(12)}[e] \ar@{-}[d]^{\ku_e} \ar@{-}[1, -1]_{\ku_e}\\
\cA_{[\ba]}\cdot v \ar@{-}[d]_L &
\ar@{-}[1, -1]^L \cA_{[\ba]}\cdot\langle\mtres{13}{12}{23}, m_{\textsf{o}}\rangle \ar@{-}[1, 1]_{\ku_e}
& \cA_{[\ba]}\cdot w \ar@{-}[d]^{\ku_e} \\
\cA_{[\ba]}\cdot\mtres{13}{12}{23}\ar@{-}[1, 1]_{\ku_e} & & \cA_{[\ba]}\cdot m_{\textsf{o}}\ar@{-}[1, -1]^L\\
& \langle\mcuatro\rangle & }
$$
Here $v$ and $w$ satisfy $M_{(12)}[\mbox{\rm\footnotesize (13)(23)}]=\langle v, m_{\textsf{o}}\rangle$, $M_{(12)}[e]=\langle w, \mtres{13}{12}{23}\rangle$.
The submodules $\cA_{[\ba]}\cdot v$ (resp. $\cA_{[\ba]}\cdot w$) and $\cA_{[\ba]}\cdot v_1$ (resp. $\cA_{[\ba]}\cdot w_1$) coincide iff $v\in \langle v_1\rangle$ (resp. $w\in \langle w_1\rangle$). The labels on the arrows indicate the quotient of the module on top by the module on the bottom.
\end{lema}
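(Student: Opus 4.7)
My plan is to adapt the argument of Lemma \ref{le:submodules Me in the non generic case} to the Verma module at $g=(12)\in\Sn_3^{\ba}$. First I would record the isotypic decomposition of $M_{(12)}$ obtained from \eqref{eq:isotypic-gral}: in particular $M_{(12)}[(12)]=\langle m_1,\mcuatro\rangle$, $M_{(12)}[e]=\langle\mij{12},\mtres{13}{12}{23}\rangle$, and $M_{(12)}[(13)(23)]=\langle\mdos{12}{13},\mdos{23}{12}\rangle$, the latter sitting in the big $\link$-class $\{(13),(23),(13)(23),(23)(13)\}$ on which Remark \ref{obs:fij} forces submodule weight-spaces to have constant dimension. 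Since $(12)\in\Sn_3^{\ba}$, the identities $\fij{13}((12))=\fij{23}((12))=\Omega((12))=0$ hold, and I will use them repeatedly to kill the ``error terms'' in the action formulae.

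Next I would compute the three atomic submodules above $0$. From \eqref{eq:mcuatro-a}--\eqref{eq:mcuatro-c} one gets $\cA_{[\ba]}\cdot\mcuatro=\langle\mcuatro\rangle\simeq\ku_{(12)}$. Plugging $g=(12)$ into \eqref{eq:13 act 13 12 23}, \eqref{eq:23 act 13 12 23} and \eqref{eqn:cuentas-mtres} gives $\xij{13}\cdot\mtres{13}{12}{23}=\xij{23}\cdot\mtres{13}{12}{23}=0$ and $\xij{12}\cdot\mtres{13}{12}{23}=-\mcuatro$, so $\cA_{[\ba]}\cdot\mtres{13}{12}{23}=\langle\mtres{13}{12}{23},\mcuatro\rangle$, an extension of $\ku_e$ by $\ku_{(12)}$. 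For $m_{\textsf{o}}\in M_{(12)}[(13)(23)]$, a short computation with \eqref{eqn:cuentas-muno}--\eqref{eq:mcuatro-c} yields $\xij{23}\cdot m_{\textsf{o}}=\mcuatro$ while $\xij{13}\cdot m_{\textsf{o}}$ and $\xij{12}\cdot m_{\textsf{o}}$ are non-zero vectors in the big $\link$-class; by Remark \ref{obs:ciclico-no-generico} the module $\cA_{[\ba]}\cdot m_{\textsf{o}}$ is $5$-dimensional, with $\cA_{[\ba]}\cdot m_{\textsf{o}}/\langle\mcuatro\rangle\simeq L$. Then for $v\in M_{(12)}[(13)(23)]-\langle m_{\textsf{o}}\rangle$, running the three products $\xij{13}\xij{23}\cdot v$, $\xij{23}\xij{12}\cdot v$, $\xij{12}\xij{13}\cdot v$ as in the proof of Lemma \ref{le:submodules Me in the non generic case} shows $\dim(\cA_{[\ba]}\cdot v)[(23)(13)]=1$ and $\cA_{[\ba]}\cdot m_{\textsf{o}}\subsetneq\cA_{[\ba]}\cdot v$, so $\cA_{[\ba]}\cdot v=\cA_{[\ba]}\cdot m_{\textsf{o}}+\langle v\rangle$ has quotient $\ku_e$; dependence only on the line $\langle v\rangle$ is transparent. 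A symmetric argument with $w\in M_{(12)}[e]-\langle\mtres{13}{12}{23}\rangle$ produces $\cA_{[\ba]}\cdot w=\cA_{[\ba]}\cdot\mtres{13}{12}{23}+\langle w\rangle$ with quotient $L$, and $\cA_{[\ba]}\cdot\langle\mtres{13}{12}{23},m_{\textsf{o}}\rangle$ is the sum of these two atomic layers sitting in the middle row of the diagram.

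Finally, for an arbitrary proper non-trivial submodule $N$ of $M_{(12)}$: if $N[(12)]\not\subseteq\langle\mcuatro\rangle$ then $m_1\in N$, so $N=M_{(12)}$; hence $N[(12)]\subseteq\langle\mcuatro\rangle$. Setting $\widetilde N=\cA_{[\ba]}\cdot N[(13)(23)]+\cA_{[\ba]}\cdot N[e]$, Remark \ref{obs:fij}\eqref{item:rem-fij-b} yields $\widetilde N[h]=N[h]$ for every $h$ in the big $\link$-class and for $h=e$, and the atomic analysis forces $\mcuatro\in\widetilde N$ whenever $N\neq\langle\mcuatro\rangle$. Thus $N=\widetilde N$, and the lattice follows by enumerating the possibilities for the pair $(N[(13)(23)],N[e])$ and invoking the cyclic-module computations already made. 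The step I expect to be most delicate is the bookkeeping at $g=(12)$: because so many structure constants vanish there, one must verify with care that $\cA_{[\ba]}\cdot v\,[(23)(13)]$ is really one-dimensional and that each edge label in the diagram is the predicted simple among $\ku_e$, $\ku_{(12)}$, $L$.
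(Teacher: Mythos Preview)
Your overall strategy is the same as the paper's: identify the cyclic submodules generated by vectors in the key weight spaces, then show that any proper submodule is generated by its $(13)(23)$- and $e$-weight components. But there are two concrete errors in the bookkeeping.

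First, the isotypic component $M_{(12)}[(13)(23)]$ is misidentified. Using \eqref{eq:isotypic-gral} one has $M_{(12)}[h]=M_e[h(12)]$, and since $(13)(23)(12)=(23)$ this gives $M_{(12)}[(13)(23)]=M_e[(23)]=\langle\mij{23},\mtres{13}{12}{13}\rangle$, not $\langle\mdos{12}{13},\mdos{23}{12}\rangle$ (the latter is $M_{(12)}[(23)]$). This is already visible from the lemma statement itself, which asserts $m_{\textsf{o}}=\mtres{13}{12}{13}+\fij{13}((23))\mij{23}\in M_{(12)}[(13)(23)]$; your claimed basis does not contain $m_{\textsf{o}}$. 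The paper's proof accordingly starts from $v=\lambda\mij{23}+\mu\mtres{13}{12}{13}$.

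Second, and more structurally, you have swapped the two chains of the lattice. For $v\in M_{(12)}[(13)(23)]-\langle m_{\textsf{o}}\rangle$ the cyclic module $\cA_{[\ba]}\cdot v$ does \emph{not} contain $\cA_{[\ba]}\cdot m_{\textsf{o}}$: its $(13)(23)$-weight space is $\langle v\rangle$, disjoint from $\langle m_{\textsf{o}}\rangle$. Rather, the computation (the paper records it as $(\cA_{[\ba]}\cdot v)[e]=\langle(\fij{13}((23))\mu-\lambda)\mtres{13}{12}{23}\rangle$) shows $\cA_{[\ba]}\cdot\mtres{13}{12}{23}\subsetneq\cA_{[\ba]}\cdot v$ with quotient $L$, not $\ku_e$. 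Dually, for $w\in M_{(12)}[e]-\langle\mtres{13}{12}{23}\rangle$ one finds $\cA_{[\ba]}\cdot w=\cA_{[\ba]}\cdot m_{\textsf{o}}\oplus\langle w\rangle$ with quotient $\ku_e$; your formula $\cA_{[\ba]}\cdot w=\cA_{[\ba]}\cdot\mtres{13}{12}{23}+\langle w\rangle$ cannot be right on dimension grounds, since the right-hand side has dimension at most $3$ yet you claim the quotient is the $4$-dimensional module $L$. Once these two points are corrected, your argument goes through exactly as in the paper.
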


\begin{proof}
Let $v=\lambda\mij{23}+\mu\mtres{13}{12}{13}\in M_{(12)}[\mbox{\footnotesize (13)(23)}]$ be a non-zero element. By Remark \ref{obs:ciclico-no-generico} and using the formulae \eqref{eqn:cuentas-muno} to \eqref{eq:mcuatro-c}, we see that
\begin{align}
\notag
(\cA_{[\ba]}\cdot v)[\mbox{\footnotesize (13)(23)}]&=\langle v\rangle,\\
\notag
(\cA_{[\ba]}\cdot v)[\mbox{\footnotesize (13)}]&=\langle(\fij{13}(\mbox{\footnotesize (23)})\mu-\lambda)\mdos{12}{23}
-\mu\fij{13}(\mbox{\footnotesize (23)})\mdos{13}{12}\rangle, \\
\label{eq:submodules M12 in the non generic case 1}
(\cA_{[\ba]}\cdot v)[\mbox{\footnotesize (23)}]&=\langle(\fij{13}(\mbox{\footnotesize (23)})\mu-\lambda)\mdos{12}{13}-\lambda\mdos{23}{12}\rangle,\\
\notag
(\cA_{[\ba]}\cdot v)[\mbox{\footnotesize (23)(13)}]&=\langle(\fij{13}(\mbox{\footnotesize (23)})\mu-\lambda)\fij{23}(\mbox{\footnotesize (13)})\mij{13}+ \lambda\mtres{12}{23}{12}\rangle,\\ \notag
(\cA_{[\ba]}\cdot v)[\mbox{\footnotesize (12)}]&=\langle\mcuatro\rangle\mbox{ and}\\ \notag
(\cA_{[\ba]}\cdot v)[e]&=\langle(\fij{13}(\mbox{\footnotesize (23)})\mu-\lambda)\mtres{13}{12}{23}\rangle.
\end{align}
\smallbreak

By \eqref{eq:mcuatro-a}, \eqref{eq:mcuatro-b} and \eqref{eq:mcuatro-c}, $\xij{ij}\cdot\mcuatro=0$ for all $(ij)\in\cO_2^3$. Then $$\cA_{[\ba]}\cdot\mcuatro=\langle\mcuatro\rangle$$
and $\cA_{[\ba]}\cdot u=\cA_{[\ba]}\cdot m_1=M_e$, if $u\in M_{(12)}[\mbox{\footnotesize (12)}]$ is linearly independent to $\mcuatro$.
By \eqref{eq:13 act 13 12 23}, \eqref{eq:23 act 13 12 23} and \eqref{eqn:cuentas-mtres}, $\xij{ij}\cdot\mtres{13}{12}{23}=-\delta_{(12)}(\mbox{\footnotesize (ij)})\mcuatro$ for all $(ij)\in\cO_2^3$. Then
$$\cA_{[\ba]}\cdot\mtres{13}{12}{23}=\langle\mcuatro, \mtres{13}{12}{23}\rangle.$$
By \eqref{eq:action-Verma-letras1}, \eqref{eqn:13 en 12} and \eqref{eqn:23 en 12}, $\xij{ij}\cdot\mij{12}=\delta_{(13)}(\mbox{\footnotesize(ij)})\mdos{13}{12}+\delta_{(23)}(\mbox{\footnotesize(ij)})\mdos{23}{12}$ for all $(ij)\in\cO_2^3$. Then
$$\cA_{[\ba]}\cdot w=\cA_{[\ba]}\cdot m_{\textsf{o}}\oplus\langle w\rangle$$
by \eqref{eq:submodules M12 in the non generic case 1} and Remark \ref{obs:fij}, if $w\in M_{(12)}[e]$ is linearly independent to $\mtres{13}{12}{23}$.

\bigbreak

Let now $N$ be a (proper, non-trivial) submodule of $M_{(12)}$ which is not $\langle\mcuatro\rangle$. We set $\widetilde{N}=\cA_{[\ba]}\cdot N[e]+ \cA_{[\ba]}\cdot N[\mbox{\footnotesize (13)(23)}]$. Then $\widetilde{N}[g]=N[g]$ for all $g\neq (12)$ by Remark \ref{obs:fij}.
By the argument at the beginning of the proof, $\langle\mcuatro\rangle\subset\widetilde{N}$. Then $N[\mbox{\footnotesize (12)}]=\langle\mcuatro\rangle=\tilde{N}[\mbox{\footnotesize (12)}]$ because otherwise $N=M_{(12)}$. Therefore $N=\widetilde{N}$. To finish, we have to calculate the submodules of $M_{(12)}$ generated by homogeneous subspaces of $M_{(12)}[\mbox{\footnotesize (13)(23)}]\oplus M_{(12)}[e]$; this follows from the argument at the beginning of the proof.
\end{proof}

\bigbreak

As a consequence, we obtain the simples modules in the sub-generic case. The proof of the next theorem runs in the same way as that of Theorem \ref{thm:simples in the generic case}.

\begin{thm}\label{thm:simples in the non generic case}
Let $\ba\in\gA_3$ with $\aij{12}\neq\aij{13}=\aij{23}$. There are exactly $3$ simple $\cA_{[\ba]}$modules up to isomorphism, namely $\ku_e$, $\ku_{(12)}$ and $L$. Moreover, $M_e$ is the projective cover, and the injective hull, of $\ku_e$;
$M_{(12)}$ is the projective cover, and the injective hull, of $\ku_{(12)}$; and $M_{(13)(23)}$ is the projective cover, and the injective hull, of $L$.
\end{thm}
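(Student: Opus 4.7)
The plan is to follow the proof of Theorem \ref{thm:simples in the generic case} step by step, replacing the two inputs (generic lattice lemmata) by the three sub-generic lattice lemmata just established.

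First, by Proposition \ref{pr:induced} every finite-dimensional simple $\cA_{[\ba]}$-module is a quotient of some Verma module $M_g$, $g\in\Sn_3$. Next, by Proposition \ref{prop: g h linked then the Verma are isomorphic} and the explicit computation of the $\link$-equivalence classes
$\{e\}$, $\{(12)\}$, $\{(13),(23),(13)(23),(23)(13)\}$
given at the beginning of Subsection \ref{subsec: non generic case}, it suffices to examine $M_e$, $M_{(12)}$ and $M_{(13)(23)}$. From Lemmata \ref{le:submodules Me in the non generic case}, \ref{le:submodules M12 in the non generic case} and \ref{le:submodules Mg in the non generic case}, each of these Verma modules has a unique maximal submodule (namely $N_e$, $N_{(12)}$ and $N_{(13)(23)}$ respectively), with corresponding simple quotients $\ku_e$, $\ku_{(12)}$ and $L$. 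Hence these three modules exhaust $\widehat{\cA_{[\ba]}}$; they are pairwise non-isomorphic since $\dim L = 4 \ne 1 = \dim \ku_e = \dim \ku_{(12)}$, and $\ku_e$ and $\ku_{(12)}$ are distinguished by the action of $\ku^{\Sn_3}$ via Lemma \ref{prop:dim-uno}.

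For the second statement I would argue by counting primitive idempotents, exactly as in the generic case. The sum of the dimensions of the simple modules is $1+1+4 = 6 = |\Sn_3|$, so by \cite[(6.8)]{CR} any complete set of primitive orthogonal idempotents in $\cA_{[\ba]}$ has at most $6$ elements. Since $\{\delta_g : g \in \Sn_3\}$ is already a complete set of $6$ orthogonal idempotents, each $\delta_g$ must be primitive. Therefore $M_g \simeq \cA_{[\ba]}\delta_g$ is an indecomposable projective module, and since $\cA_{[\ba]}$ is Frobenius it is simultaneously the injective hull and the projective cover of its unique simple quotient, cf.\ page \pageref{bullet:injective-hull} and \cite[(9.9)]{CR}. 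Matching $M_e$, $M_{(12)}$ and $M_{(13)(23)}$ with $\ku_e$, $\ku_{(12)}$ and $L$ respectively (using again the three lattice lemmata) completes the proof.

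The only step that requires any care is the primitivity argument, since one must make sure that the dimensions of the simples add up correctly to $|\Sn_3|$ so that no $\delta_g$ can split further. Everything else is a direct transcription of the generic-case proof with the enlarged list of simples; the substance of the theorem is already contained in the three lattice lemmata.
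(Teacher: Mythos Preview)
Your proof is correct and follows essentially the same approach as the paper's own proof: reduce to the three Verma modules via $\link$-classes and Proposition \ref{prop: g h linked then the Verma are isomorphic}, read off the unique simple quotients from the lattice lemmata, then force the $\delta_g$ to be primitive by the idempotent count and invoke \cite[(9.9)]{CR}. Your explicit dimension count $1+1+4=6$ is exactly the content behind the paper's bare citation of \cite[(6.8)]{CR}; the paper leaves this implicit. One small wording issue: for a Frobenius algebra an indecomposable projective is the projective cover of its \emph{top} but the injective hull of its \emph{socle}, and these need not coincide a priori; your phrase ``injective hull \dots\ of its unique simple quotient'' elides this. It is harmless here because the three lattice lemmata display both socle and top and they agree in each case, and you do invoke those lemmata for the final matching --- but it would be cleaner to say so explicitly.
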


\begin{proof}
We know that $\ku_e$, $\ku_{(12)}$ and $L$ are the only two simple $\cA_{[\ba]}$-modules up to isomorphism
by Proposition \ref{pr:induced} and Lemmata \ref{le:submodules Me in the non generic case}, \ref{le:submodules Mg in the non generic case}
and \ref{le:submodules M12 in the non generic case}.
Hence, a set of primitive orthogonal idempotents has at most 6 elements \cite[(6.8)]{CR}.
Since the $\delta_g$, $g\in \Sn_3$ are orthogonal idempotents, they must be primitive. Therefore
 $M_e$, $M_{(12)}$ and $M_{(13)(23)}$ are respectively the projective covers (and the injective hulls) of $\ku_e$, $\ku_{(12)}$ and $L$ by \cite[(9.9)]{CR}, see page \pageref{bullet:injective-hull}.
\end{proof}

\section{Representation type of $\cA_{[\ba]}$}\label{sec: tipo de rep}

In this section, we assume that $n=3$ as in the preceding one. We will determine the $\cA_{[\ba]}$-modules which are extensions of simple $\cA_{[\ba]}$-modules. As a consequence, we will show that $\cA_{[\ba]}$ is not of finite representation type for all $\ba\in\gA_3$.

\subsection{Extensions of simple modules}\label{subsec: ext of simple mod}

By the following lemma, we are reduced to consider only submodules of the Verma modules for to determine the extensions of simple $\cA_{[\ba]}$-modules. Then we shall split the consideration into three different cases like Section \ref{sect:modules} and use the lemmata there.

\begin{lema}\label{le: extensions are ss or included in verma}
Let $\ba\in\gA_3$ be non-zero. Let $S$ and $T$ be simple $\cA_{[\ba]}$-modules and $M$ be an extension of $T$ by $S$. Hence either $M\simeq S\oplus T$ as $\cA_{[\ba]}$-modules or $M$ is an indecomposable submodule of the Verma module which is the injective hull of $S$.
\end{lema}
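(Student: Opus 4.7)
The plan is to exploit the fact that $\cA_{[\ba]}$ is Frobenius, so the injective hull $I(S)$ of $S$ exists and, by Theorems \ref{thm:simples in the generic case} and \ref{thm:simples in the non generic case}, coincides with the appropriate Verma module. First I would observe that the hypothesis $\ba\neq0$ together with $\sum_{(ij)} a_{(ij)}=0$ rules out the degenerate case $a_{(12)}=a_{(13)}=a_{(23)}$, so we are in either the generic or the sub-generic setting covered by those theorems.

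Given the short exact sequence $0\to S\to M\to T\to 0$, the key step is to extend the inclusion $\iota:S\hookrightarrow I(S)$ along $S\hookrightarrow M$ using injectivity of $I(S)$, obtaining $\phi:M\to I(S)$ with $\phi|_{S}=\iota$. I would then analyse $K:=\ker\phi$: the equality $\phi|_{S}=\iota$ forces $K\cap S=0$, so the composition $K\hookrightarrow M\twoheadrightarrow M/S\simeq T$ is injective. Simplicity of $T$ then leaves only two possibilities: $K=0$ or $K\simeq T$.

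If $K\simeq T$, a dimension count together with $K\cap S=0$ gives $M=S\oplus K\simeq S\oplus T$, i.e.\ the split extension. If $K=0$, then $\phi$ is injective and embeds $M$ into $I(S)$. Should $M$ decompose as $M_{1}\oplus M_{2}$ with both factors non-zero, simplicity of $S$ would place $S$ in one summand, say $M_{1}$; then $M/S=M_{1}/S\oplus M_{2}\simeq T$ together with simplicity of $T$ would force $M_{1}=S$ and $M_{2}\simeq T$, so again $M\simeq S\oplus T$. Hence the only non-split case is that $M$ is an indecomposable submodule of $I(S)$, which is precisely the alternative in the statement.

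I do not foresee a serious obstacle beyond correctly invoking the identification of $I(S)$ with a Verma module case by case; the rest is a short diagram chase relying only on the injectivity of $I(S)$ and the simplicity of $S$ and $T$.
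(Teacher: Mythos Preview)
Your proof is correct and follows essentially the same route as the paper: both extend $S\hookrightarrow I(S)$ along $S\hookrightarrow M$ via injectivity of the Verma module and then analyse the kernel of the resulting map. The only notable difference is that the paper invokes the explicit submodule lattices of the Verma modules (Lemmata \ref{le:submodules Me in the generic case}, \ref{le:submodules Mg in the generic case}, \ref{le:submodules Me in the non generic case}, \ref{le:submodules Mg in the non generic case}, \ref{le:submodules M12 in the non generic case}) for the indecomposability claim, whereas you argue directly; one small caveat in your direct argument is that ``simplicity of $S$ places $S$ in one summand'' need not hold literally when $S\simeq T$ (consider a diagonal copy of $S$ in $S\oplus S$), but since each nonzero summand of a length-two module is simple, $M\simeq S\oplus T$ follows regardless and your conclusion stands.
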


\begin{proof}
If there exists a proper submodule $N$ of $M$ which is not $S$, then $M\simeq S\oplus T$ as $\cA_{[\ba]}$-modules. In fact,  $N\cap S$ is either $0$ or $S$ because $S$ is simple. Let $\pi$ be as in \eqref{eq:comm diagram}. Since $T$ is simple, $\pi_{|N}:N\rightarrow T$ results an epimorphism. Therefore $M\simeq S\oplus T$ since $\dim N=\dim(N\cap S)+\dim T$.

Let $M_S$ be the Verma module which is the injective hull of $S$. Then we have the following commutative diagram
\begin{align}\label{eq:comm diagram}
 \xymatrix{
0\ar[r] & S\ar[r]^\imath \ar@{^{(}->}[1,0]& M\ar[r]^\pi \ar@{-->}[1,-1]^f& T\ar[r] & 0 \\
        & M_S            &             &         &
}
\end{align}

Therefore either $M\simeq S\oplus T$ as $\cA_{[\ba]}$-modules or $f$ is inyective. If $f$ is inyective, then $M$ results indecomposable by  Lemmata \ref{le:submodules Me in the generic case} and \ref{le:submodules Mg in the generic case} in the generic case, and by Lemmata \ref{le:submodules Me in the non generic case}, \ref{le:submodules Mg in the non generic case} and \ref{le:submodules M12 in the non generic case} in the sub-generic case.
\end{proof}

\smallbreak

Recall the modules $W_\bt(L,\ku_e)$ and $W_\bt(\ku_e, L)$ from Definitions \ref{def: Wt ext L by ke - a generic} and  \ref{def: Wt ext ke by L - a generic}. The next results follow from Lemmata \ref{le:submodules Me in the generic case}, \ref{le:submodules Mg in the generic case}, \ref{le:submodules Me in the non generic case}, \ref{le:submodules Mg in the non generic case} and \ref{le:submodules M12 in the non generic case} by Lemma \ref{le: extensions are ss or included in verma}.

\begin{lema}\label{le: ext of simple modules a generic}
Let $\ba\in\gA_3$ be generic. Let $S$ and $T$ be simple $\cA_{[\ba]}$-modules and $M$ be an extension of $T$ by $S$.
\renewcommand{\theenumi}{\alph{enumi}}   \renewcommand{\labelenumi}{(\theenumi)}
\begin{enumerate}
\item\label{item: ext of simple modules a generic: S=T}
If $S\simeq T$, then $M\simeq S\oplus S$.
\item\label{item: ext of simple modules a generic: L ke}
If $S\simeq\ku_e$ and $T\simeq L$, then $M\simeq W_{\bt}(L,\ku_e)$ for some $\bt\in\gA_3$.
\item\label{item: ext of simple modules a generic: ke L}
If $S\simeq L$ and $T\simeq\ku_e$, then $M\simeq W_{\bt}(\ku_e, L)$ for some $\bt\in\gA_3$. $\qed$
\end{enumerate}
\end{lema}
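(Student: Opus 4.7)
The plan is to apply Lemma \ref{le: extensions are ss or included in verma} as the opening move: for any extension $M$ of $T$ by $S$, either $M \simeq S \oplus T$, or $M$ is an indecomposable submodule of the Verma module $M_S$ which is the injective hull of $S$. In the latter case, $M$ has exactly two composition factors, namely $S$ as its unique simple submodule and $T$ as quotient. So the whole proof reduces to reading off the indecomposable submodules of $M_e$ and $M_{(13)(23)}$ having precisely two composition factors, and matching them against the modules $W_{\bt}$.

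For part (a), I would argue that the indecomposable case cannot occur. Inspecting the lattice of submodules of $M_e$ from Lemma \ref{le:submodules Me in the generic case}, any indecomposable submodule with two composition factors must lie in the chain $\langle\mcuatro\rangle\subsetneq\cA_{[\ba]}\cdot v\subsetneq N_e$ for some $v\in M_e[\mbox{\footnotesize (13)(23)}]-0$, and by the labels on that chain it is of the form $\cA_{[\ba]}\cdot v$, an extension of $L$ by $\ku_e$; in particular its two composition factors are \emph{different} simples. The same argument applied to $M_{(13)(23)}$ via Lemma \ref{le:submodules Mg in the generic case} shows the indecomposable submodules with two composition factors are extensions of $\ku_e$ by $L$. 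Hence if $S\simeq T$, the only possibility is $M \simeq S \oplus T \simeq S \oplus S$.

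For parts (b) and (c), I would combine the two possibilities from Lemma \ref{le: extensions are ss or included in verma} into a uniform statement. In case (b), the split extension $\ku_e \oplus L$ is isomorphic to $W_{(0,0,0)}(L,\ku_e)$ by Lemma \ref{le: Wt ext L by ke - a generic}\eqref{item: Wt ext L by ke - a generic: t=0}, while every indecomposable submodule of $M_e$ with composition factors $\ku_e$ and $L$ (socle $\ku_e$, top $L$) is of the form $\cA_{[\ba]}\cdot v$ for some $v \in M_e[\mbox{\footnotesize (13)(23)}]-0$, and hence is isomorphic to $W_{\bt}(L,\ku_e)$ for some $\bt \neq (0,0,0)$ by Lemma \ref{le: Wt ext L by ke - a generic}\eqref{item: Wt ext L by ke - a generic: t non 0 reciprocal}. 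Part (c) is the symmetric statement, using Lemma \ref{le:submodules Mg in the generic case} and Lemma \ref{le: Wt ext ke by L - a generic}\eqref{item: Wt ext ke by L - a generic: t=0} and \eqref{item: Wt ext ke by L - a generic: t non 0 reciprocal}.

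I do not anticipate a genuine obstacle, since all the technical work has been done in the earlier lemmata; the proof is essentially a bookkeeping step matching isomorphism classes of extensions with the parametrized families $W_{\bt}(L,\ku_e)$ and $W_{\bt}(\ku_e,L)$. The only point that requires a little care is checking that the existing lemmata exhaust the indecomposable submodules with precisely two composition factors, i.e.\ that every such submodule sits in one of the two distinguished chains of the lattice rather than crossing between the two branches; this is immediate from the shape of the lattices displayed in \eqref{eq:lattice-Me-nongeneric} and in Lemma \ref{le:submodules Mg in the generic case}.
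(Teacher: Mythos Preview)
Your proposal is correct and follows essentially the same approach as the paper, which simply states that the result follows from Lemmata \ref{le:submodules Me in the generic case} and \ref{le:submodules Mg in the generic case} via Lemma \ref{le: extensions are ss or included in verma}; you have spelled out the bookkeeping in more detail, including the use of Lemmata \ref{le: Wt ext L by ke - a generic} and \ref{le: Wt ext ke by L - a generic} to identify the indecomposable submodules with the $W_{\bt}$ modules. One small slip: in your final sentence you cite \eqref{eq:lattice-Me-nongeneric}, but that diagram is the lattice for $M_e$ in the \emph{sub-generic} case; in the generic case the lattice of $M_e$ is the (family of) chains described in Lemma \ref{le:submodules Me in the generic case}, which is what you should be citing there.
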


\begin{lema}\label{le: ext of simple modules a sub generic}
Let $\ba\in\gA_3$ with $\aij{12}\neq\aij{13}=\aij{23}$. Let $S$ and $T$ be simple $\cA_{[\ba]}$-modules and $M$ be an extension of $T$ by $S$.
\renewcommand{\theenumi}{\alph{enumi}}   \renewcommand{\labelenumi}{(\theenumi)}
\begin{enumerate}
\item\label{item: ext of simple modules a sub generic: S=T}
If $S\simeq T$, then $M\simeq S\oplus S$.
\item\label{item: ext of simple modules a sub generic: k12 ke}
If $S\simeq\ku_e$ and $T\simeq \ku_{(12)}$, then $M\simeq\cA_{[\ba]}\cdot\mtres{13}{12}{23}\subset M_e$.
\item\label{item: ext of simple modules a sub generic: ke k12}
If $S\simeq \ku_{(12)}$ and $T\simeq\ku_e$, then $M\simeq\cA_{[\ba]}\cdot\mtres{13}{12}{23}\subset M_{(12)}$.
\item\label{item: ext of simple modules a sub generic: L ke}
If $S\simeq\ku_e$ and $T\simeq L$, then $M\simeq\cA_{[\ba]}\cdot\mdos{23}{12}\subset M_e$.
\item\label{item: ext of simple modules a sub generic: ke L}
If $S\simeq L$ and $T\simeq\ku_e$, then $M\simeq\cA_{[\ba]}\cdot\mdos{12}{23}\subset M_{(13)(23)}$.
\item\label{item: ext of simple modules a sub generic: L k12}
If $S\simeq \ku_{(12)}$ and $T\simeq L$, then $M\simeq\cA_{[\ba]}\cdot m_{\textsf{o}}\subset M_{(12)}$.
\item\label{item: ext of simple modules a sub generic: k12 L}
If $S\simeq L$ and $T\simeq\ku_{(12)}$, then $M\simeq\cA_{[\ba]}\cdot m_{\textsf{o}}\subset M_{(13)(23)}$. $\qed$
\end{enumerate}
\end{lema}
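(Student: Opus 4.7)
The plan is to invoke Lemma \ref{le: extensions are ss or included in verma}, which asserts that any extension $M$ of $T$ by $S$ either splits as $S \oplus T$, or else is an indecomposable submodule of the Verma module that is the injective hull of $S$. By Theorem \ref{thm:simples in the non generic case}, these injective hulls are $M_e$, $M_{(12)}$, and $M_{(13)(23)}$, corresponding respectively to $S \simeq \ku_e$, $\ku_{(12)}$, or $L$. The problem therefore reduces to enumerating, in each of these three Verma modules, the indecomposable submodules of length two whose socle is $S$, and to identifying the simple quotient by the socle.

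The next step is to read off this enumeration directly from the lattices of Lemmata \ref{le:submodules Me in the non generic case}, \ref{le:submodules Mg in the non generic case} and \ref{le:submodules M12 in the non generic case}. In each lattice, exactly two submodules sit immediately above the simple socle: in $M_e$ these are $\cA_{[\ba]}\cdot\mtres{13}{12}{23}$ (with quotient $\ku_{(12)}$) and $\cA_{[\ba]}\cdot\mdos{23}{12}$ (with quotient $L$); in $M_{(12)}$ they are $\cA_{[\ba]}\cdot\mtres{13}{12}{23}$ (with quotient $\ku_e$) and $\cA_{[\ba]}\cdot m_{\textsf{o}}$ (with quotient $L$); in $M_{(13)(23)}$ they are $\cA_{[\ba]}\cdot\mdos{12}{23}$ (with quotient $\ku_e$) and $\cA_{[\ba]}\cdot m_{\textsf{o}}$ (with quotient $\ku_{(12)}$). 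The labels on the downward arrows of each lattice identify these quotients, and matching them against the six non-diagonal pairs $(S,T)$ yields cases \eqref{item: ext of simple modules a sub generic: k12 ke} through \eqref{item: ext of simple modules a sub generic: k12 L} exactly as stated.

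For case \eqref{item: ext of simple modules a sub generic: S=T}, where $S \simeq T$, the observation is that all six length-two submodules listed above have top strictly different from their socle, so no indecomposable length-two extension of a simple $\cA_{[\ba]}$-module by itself arises inside the relevant injective hull; hence $M \simeq S \oplus S$. The main obstacle is purely bookkeeping: one must verify that each labeled arrow in the lattices correctly identifies the simple quotient, and that the specific submodules named in the statement coincide with the ones appearing as second-level nodes of the lattices. No computation beyond what was already carried out in the cited lemmata is required.
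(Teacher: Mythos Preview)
Your proposal is correct and follows exactly the approach indicated in the paper: the paper's own proof is the single sentence ``The next results follow from Lemmata \ref{le:submodules Me in the non generic case}, \ref{le:submodules Mg in the non generic case} and \ref{le:submodules M12 in the non generic case} by Lemma \ref{le: extensions are ss or included in verma},'' and you have simply unpacked that sentence in full. Your identification of the two length-two submodules immediately above the socle in each Verma lattice, together with the observation that none of them has top isomorphic to its socle, is precisely the bookkeeping the paper leaves implicit.
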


\smallbreak

\begin{lema}\label{le: extensions of one dimensional simple mod}
Let $\ku_g$ and $\ku_h$ be one-dimensional simple $\cA_{[(0,0,0)]}$-modules and $M$ be an extension of $\ku_h$ by $\ku_g$. Hence
\renewcommand{\theenumi}{\alph{enumi}}   \renewcommand{\labelenumi}{(\theenumi)}
\begin{enumerate}
\item \label{item: extensions of one dimensional simple mod: same sgn} If $\sgn g=\sgn h$, then $M\simeq\ku_g\oplus\ku_h$.
\item \label{item: extensions of one dimensional simple mod: dife sgn} If $\sgn g\neq\sgn h$ and $M$ is not isomorphic to $\ku_g\oplus\ku_h$, then $g=(st)h$ for a unique $(st)\in\cO_3^2$ and $M$ has a basis $\{w_g, w_h\}$ such that $\langle w_g\rangle\simeq \ku_g$ as $\cA_{[\ba]}$-modules, $w_h\in M[h]$ and $\xij{ij} w_h=\delta_{(ij), (st)}w_g$.
\end{enumerate}
\end{lema}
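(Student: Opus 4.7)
The plan is to exploit the $\ku^{\Sn_3}$-isotypic decomposition of $M$, since by hypothesis $\ba=(0,0,0)$ forces all $f_{ij}=0$, so the relations \eqref{eq:rels-powers Aa} reduce to $\xij{ij}^2=0$ in $\cA_{[(0,0,0)]}$, and in particular $\xij{ij}$ annihilates every one-dimensional module $\ku_k$. From the short exact sequence $0 \to \ku_g \to M \to \ku_h \to 0$, applying the $\ku^{\Sn_3}$-linear projection $\pi:M\to\ku_h$ to a weight space $M[k]$ shows that $M[k]\subseteq\ker\pi=\ku_g$ whenever $k\neq h$. Hence $M[k]=0$ for $k\notin\{g,h\}$, $M[g]\supseteq\ku_g$, and $\dim M[g]+\dim M[h]=2$.

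Next I would use \eqref{eq:comp-isotyp}: $\xij{ij}\cdot M[k]\subseteq M[(ij)k]$. Since $\ku_g$ is a submodule with $\xij{ij}\cdot\ku_g=0$, the only action left to determine is $\xij{ij}\cdot M[h]$, which must land in $M[(ij)h]\subseteq M[g]+M[h]$. Because $(ij)h\neq h$ for any transposition, it must land in $M[g]$, and is nonzero only if $(ij)h=g$.

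For part \eqref{item: extensions of one dimensional simple mod: same sgn}, assume $\sgn g=\sgn h$. If $g=h$, then $M=M[g]$, and for every transposition $(ij)$ we have $(ij)g\neq g$, so $M[(ij)g]=0$ and every $\xij{ij}$ acts as zero on $M$; any vector $v\in M[g]\setminus\ku_g$ then spans a submodule complement $\simeq\ku_g$. If $g\neq h$ with $\sgn g=\sgn h$, then $(ij)h$ has sign opposite to $h$, hence opposite to $g$, so $(ij)h\neq g$; thus $\xij{ij}$ annihilates $M[h]$ as well, and $\langle w_h\rangle\simeq\ku_h$ provides a direct summand, giving $M\simeq\ku_g\oplus\ku_h$.

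For part \eqref{item: extensions of one dimensional simple mod: dife sgn}, assume $\sgn g\neq\sgn h$. Then $gh^{-1}$ is an odd element of $\Sn_3$, hence a unique transposition $(st)$, and $(ij)h=g$ iff $(ij)=(st)$. Pick any $w_h\in M[h]\setminus 0$; by the previous paragraph $\xij{ij}\cdot w_h=\delta_{(ij),(st)}\lambda w_g$ for a scalar $\lambda$. If $\lambda=0$ then $\langle w_h\rangle\simeq\ku_h$ splits off and $M\simeq\ku_g\oplus\ku_h$; otherwise, rescale $w_g$ by $\lambda$ to obtain the prescribed action. The expected routine step is to verify that the resulting assignment indeed extends to an $\cA_{[(0,0,0)]}$-module, which reduces to checking the quadratic relations \eqref{eq:rels-powers Aa}, \eqref{eq:rels-ijkl}, \eqref{eq:rels-ijik} and the smash relation $\delta_s\xij{ij}=\xij{ij}\delta_{(ij)s}$ on the two basis vectors; all hold trivially since any product of two $\xij{ij}$'s annihilates both $w_g$ and $w_h$, and the scalar equality $\delta_{s,g}\delta_{(ij),(st)}=\delta_{(ij)s,h}\delta_{(ij),(st)}$ is immediate from $(st)h=g$.
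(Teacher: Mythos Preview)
Your proof is correct and follows essentially the same approach as the paper: decompose $M$ into $\ku^{\Sn_3}$-isotypic components to get $M=M[g]\oplus M[h]$, then use $\xij{ij}\cdot M[h]\subseteq M[(ij)h]$ to conclude. The paper's proof is simply the three-line sketch of this argument, whereas you have carefully spelled out the case analysis on $\sgn g$ versus $\sgn h$ and the rescaling step; the final verification of the relations is not strictly needed (since $M$ is already given as a module), but does no harm.
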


\begin{proof}
$M=M[g]\oplus M[h]$ as $\ku^{\Sn_3}$-modules and $M[g]\simeq\ku_g$ as $\cA_{[\ba]}$-modules. Since $\xij{ij}\cdot M[h]\subset M[\mbox{\footnotesize (ij)}h]$, the lemma follows.
\end{proof}

\subsection{Representation type}
We summarize some facts about the representation type of an algebra.

Let $R$ be an algebra and $\{S_1, ..., S_t\}$ be a complete list of non-isomorphic simple $R$-modules. The \emph{separated quiver of} $R$ is constructed as follows. The set of vertices is $\{S_1, ..., S_t, S_1', ..., S_t'\}$ and we write $\dim\Ext_R^1(S_i, S_j)$ arrows from $S_i$ to $S_j'$, cf. \cite[p. 350]{ARS}. Let us denote by $\Gamma_R$ the underlying graph of the separated quiver of $R$.

A characterization of the hereditary algebras of finite and tame representation type is well-known, see for example \cite{dlab-ringel2}. As a consequence, the next well-known result is obtained. If $R$ is of finite representation type, then it is Theorem D of \cite{dlab-ringel1} or Theorem X.2.6 of \cite{ARS}. The proof given in \cite{ARS} adapts immediately to the case when $R$ is of tame representation type.

\begin{thm}\label{thm: rep type ARS}
Let $R$ be a finite dimensional algebra with radical square zero. Then $R$ is of finite (resp. tame) representation type if and only if $\Gamma_R$ is a finite (resp. affine) disjoint union of Dynkin diagrams. $\qed$
\end{thm}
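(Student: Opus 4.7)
The plan is to reduce to the hereditary case via the standard separated-algebra construction and then invoke Gabriel's theorem together with its tame analogue.

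First, set $B=R/\operatorname{rad}(R)$, a semisimple algebra, and $J=\operatorname{rad}(R)$. Since $J^{2}=0$, $J$ is naturally a $B$-bimodule, and one forms the triangular matrix algebra
\[
R_{s}=\begin{pmatrix} B & 0 \\ J & B \end{pmatrix}.
\]
The algebra $R_{s}$ is hereditary (it is a triangular matrix algebra over a semisimple base with a square-zero, bimodule-projective off-diagonal part), and its ordinary quiver coincides by construction with the separated quiver of $R$; in particular its underlying graph equals $\Gamma_{R}$.

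Next, I would define a functor $F$ from the category of finite-dimensional $R$-modules to the category of finite-dimensional $R_{s}$-modules, sending a module $M$ to the pair $(M/JM,\,JM)$, viewed as a graded $B$-module, with connecting map given by multiplication $J\otimes_{B}(M/JM)\to JM$; this is well defined precisely because $J^{2}=0$. The main obstacle, which is the heart of the proof, is to show that $F$ preserves and reflects the representation type: up to semisimple summands it yields a bijection between indecomposable non-semisimple $R$-modules and indecomposable $R_{s}$-modules that are supported on both sides of the grading. One then verifies that a one-parameter family of indecomposables on one side transports to a one-parameter family on the other, so that $R$ has finite (resp. tame) representation type if and only if $R_{s}$ does.

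Finally, since $R_{s}$ is hereditary, I would invoke Gabriel's theorem: a connected finite-dimensional hereditary algebra has finite representation type if and only if the underlying graph of its quiver is a Dynkin diagram of type $A_{n}$, $D_{n}$, $E_{6}$, $E_{7}$ or $E_{8}$. For the tame case, the analogous theorem of Nazarova and of Donovan--Freislich (reworked conceptually by Dlab--Ringel) characterises tameness by the condition that each connected component of the underlying graph be either a Dynkin diagram or an affine Dynkin diagram $\widetilde{A}_{n}$, $\widetilde{D}_{n}$, $\widetilde{E}_{6}$, $\widetilde{E}_{7}$ or $\widetilde{E}_{8}$. Composing this classification of hereditary algebras with the representation-type transfer from the previous paragraph yields the stated equivalence for $R$.
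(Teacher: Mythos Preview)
The paper does not give a proof of this theorem at all; it is stated with an immediate $\qed$ as a known result, with the preceding sentence citing Theorem D of Dlab--Ringel and Theorem X.2.6 of Auslander--Reiten--Smal{\o}, and remarking that the proof in [ARS] for the finite-type case ``adapts immediately'' to the tame case.

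Your sketch is precisely a reconstruction of that standard argument from the cited references: form the separated (triangular matrix) algebra $R_s$, which is hereditary with underlying graph $\Gamma_R$; transfer representation type between $R$ and $R_s$ via the functor $M\mapsto (M/JM,\,JM)$; and then invoke Gabriel's theorem together with its tame analogue (Dlab--Ringel, Donovan--Freislich, Nazarova). So your approach and the paper's implicit approach coincide. The step you yourself flag as ``the heart of the proof'' --- that $F$ induces a bijection on non-semisimple indecomposables and thereby preserves and reflects representation type --- is exactly the content of the stable-equivalence argument in [ARS, Chapter X], and in a self-contained write-up that is the part that would need to be spelled out carefully; as a proof sketch pointing to the literature, what you have written is correct and matches what the paper intends.
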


In order to use the above theorem, we know that

\begin{Rem}\label{obs: for apply thm rep type ARS}
If $\mathfrak{r}$ is the radical of $R$, then the separated quiver of $R$ is equal to the separated quiver of $R/\mathfrak{r^2}$, see for example \cite[Lemma 4.5]{agustin}.
\end{Rem}

We obtain the following result by combining Corollary VI.1.5 and Proposition VI.1.6 of \cite{ARS}.

\begin{prop}\label{prop:combined of ARS}
Let $R$ be an artin algebra, $\chi$ an infinite cardinal and assume there are $\chi$ non-isomorphic indecomposable modules of length $n$. Then $R$ is not of finite representation type. $\qed$
\end{prop}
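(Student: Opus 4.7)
The plan is to invoke the combined characterization of finite representation type from Auslander-Reiten-Smal\o\ directly and derive a contradiction from the cardinality hypothesis. Specifically, Corollary VI.1.5 of \cite{ARS} asserts that over an artin algebra $R$ of finite representation type every indecomposable $R$-module is finitely generated (equivalently, of finite length), while Proposition VI.1.6 ensures that the collection of isomorphism classes of such finitely generated indecomposables is in fact finite.

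Putting these together, if $R$ were of finite representation type then there would be only finitely many isomorphism classes of indecomposable $R$-modules in total, and in particular only finitely many of length $n$. Since a module of length $n$ is automatically finitely generated, the hypothesis that there exist $\chi\geq\aleph_0$ pairwise non-isomorphic indecomposable $R$-modules of length $n$ contradicts the finiteness conclusion produced by the two ARS results. Thus $R$ cannot be of finite representation type.

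The only point demanding care is matching the notion of \emph{finite representation type} used in the statement with the formulation in \cite{ARS}; the cited corollary and proposition are precisely designed to bridge the gap between "finitely many finitely generated indecomposables" and "finitely many indecomposables (of any flavour)", so no further technical obstacle is expected beyond this bookkeeping.
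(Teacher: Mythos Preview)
Your proposal is correct and follows exactly the paper's approach: the paper gives no argument beyond citing Corollary~VI.1.5 and Proposition~VI.1.6 of \cite{ARS} and marking the statement with a \qed, and you do the same, merely spelling out how the two citations combine to yield the contradiction. There is nothing further to compare.
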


\smallbreak

Here is the announced result.

\begin{prop}
$\cA_{[(0,0,0)]}$ is of wild representation type. If $\ba\in\gA_3$ is non-zero, then $\cA_{[\ba]}$ is not of finite representation type.
\end{prop}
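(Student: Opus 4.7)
The plan is to invoke Theorem \ref{thm: rep type ARS} combined with Remark \ref{obs: for apply thm rep type ARS}: this reduces everything to identifying the separated quiver $\Gamma_{\cA_{[\ba]}}$, which depends only on the simples and their $\Ext^1$-groups, all of which have been computed in the preceding lemmata. A preliminary observation is that every $\cA_{[\ba]}/\mathfrak r^2$-module is canonically an $\cA_{[\ba]}$-module with the same endomorphism ring, so wildness (respectively, failure to be of finite representation type) descends from the quotient to $\cA_{[\ba]}$.

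For $\ba=(0,0,0)$: Lemma \ref{prop:dim-uno} provides six one-dimensional simples $\{\ku_g\}_{g\in\Sn_3}$, and Lemma \ref{le: extensions of one dimensional simple mod} gives $\dim\Ext^1(\ku_h,\ku_g)=1$ precisely when $\sgn g\neq\sgn h$ (then $gh^{-1}$ is automatically a transposition, since the odd elements of $\Sn_3$ are exactly the three transpositions) and $0$ otherwise. Consequently $\Gamma$ splits as a disjoint union $K_{3,3}\sqcup K_{3,3}$: one copy joins the three even unprimed simples to the three odd primed ones, the other joins the odd unprimed to the even primed. Since $K_{3,3}$ is neither a tree nor a single cycle, it is neither a Dynkin nor an affine Dynkin diagram; Theorem \ref{thm: rep type ARS} then yields that $\cA_{[(0,0,0)]}/\mathfrak r^2$, and hence $\cA_{[(0,0,0)]}$, is wild.

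For $\ba\neq 0$, I split into the generic and sub-generic cases. In the generic case, Lemmata \ref{le: ext of simple modules a generic} and \ref{le: Wt ext L by ke - a generic}\eqref{item: Wt ext L by ke - a generic: iso} show that non-trivial extensions of $L$ by $\ku_e$ are classified up to isomorphism by $\mathbb P(\gA_3)\simeq\mathbb P^1$, forcing $\dim\Ext^1(L,\ku_e)=\dim\gA_3=2$, and symmetrically $\dim\Ext^1(\ku_e,L)=2$. The separated quiver is then $\tilde A_1\sqcup\tilde A_1$ (two pairs of vertices each joined by a double edge). In the sub-generic case, Lemma \ref{le: ext of simple modules a sub generic} gives $\dim\Ext^1(S,T)=1$ for every pair of distinct simples in $\{\ku_e,\ku_{(12)},L\}$; tracing the resulting six edges among the six vertices shows that $\Gamma$ is the single 6-cycle $\tilde A_5$. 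In both sub-cases $\Gamma$ is affine Dynkin but not Dynkin, and Theorem \ref{thm: rep type ARS} gives that $\cA_{[\ba]}/\mathfrak r^2$ is not of finite representation type, so neither is $\cA_{[\ba]}$.

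The main point requiring care is verifying that each of the $\Ext^1$-groups in the sub-generic case is truly one-dimensional, i.e.\ that the submodules appearing in Lemma \ref{le: ext of simple modules a sub generic} are genuine non-split extensions; this is immediate from the explicit action formulae of Section \ref{sect:modules}, for instance $\xij{12}\cdot\mtres{13}{12}{23}=-\mcuatro\neq 0$ in $M_e$ witnesses non-triviality of the extension of $\ku_{(12)}$ by $\ku_e$, and analogous one-line checks handle the remaining five pairs.
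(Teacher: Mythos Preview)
Your proof is correct and follows essentially the same strategy as the paper --- compute the separated quiver and invoke Theorem~\ref{thm: rep type ARS} --- but with one genuine difference worth noting. In the generic case, the paper does \emph{not} compute $\dim\Ext^1$ or the separated quiver; instead it applies Proposition~\ref{prop:combined of ARS} directly, using the $\mathbb P^1$-family of indecomposables $W_{\bt}(L,\ku_e)$ from Lemma~\ref{le: Wt ext L by ke - a generic} to conclude non-finiteness without ever identifying $\Gamma$. (The paper does record the separated quiver for generic $\ba$, but only as a closing remark after the proof.) Your route is more uniform --- one theorem handles all three cases --- at the cost of needing the extra step that the $\mathbb P^1$-parametrization of non-split middle terms forces $\dim\Ext^1=2$; the paper's route for the generic case is more direct but breaks the uniformity.

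For the sub-generic and zero cases your argument matches the paper's, though your verification that $\dim\Ext^1=1$ (non-splitness of the explicit submodules via the lattice diagrams and action formulae) is lighter than the paper's, which writes down a projective resolution of $L$ and computes the relevant $\Hom$-spaces explicitly. Both are valid; yours is quicker given that Lemmata~\ref{le:submodules Me in the non generic case}--\ref{le:submodules M12 in the non generic case} already show the relevant two-term submodules have a unique simple submodule and are hence indecomposable.
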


\begin{proof}
If $\ba\in\gA_3$ is generic, we can apply Proposition \ref{prop:combined of ARS} by Lemma \ref{le: Wt ext L by ke - a generic} and Lemma \ref{le: Wt ext ke by L - a generic}. Hence $\cA_{[\ba]}$ is not of finite representation type for all $\ba\in\gA_3$ generic.

Let $\ba\in\gA_3$ be sub-generic or zero. Then $\dim\Ext_{\cA_{[\ba]}}^1(T,S)=0$ if $S\simeq T$ by Lemma \ref{le: ext of simple modules a sub generic} and \ref{le: extensions of one dimensional simple mod}, and $\dim\Ext_{\cA_{[\ba]}}^1(T,S)=1$ in otherwise. In fact, suppose that $\aij{12}\neq\aij{13}=\aij{23}$, $S\simeq\ku_{e}$ and $T\simeq L$. By Lemma \ref{le:submodules Mg in the non generic case} and Therorem \ref{thm:simples in the non generic case}, $L$ admits a projective resolution of the form
$$
... \longrightarrow P^2\longrightarrow M_e\oplus M_{(12)}\overset{F}{\longrightarrow} M_{(13)(23)}\longrightarrow L\longrightarrow 0,
$$
where $F$ is defined by $F_{|M_e}(m_1)=v$ and $F_{|M_{(12)}}(m_1)=w$; here $v$ and $w$ satisfy $M_{(13)(23)}[e]=\langle v,\mdos{12}{23}\rangle$,  $M_{(13)(23)}[(12)]=\langle w, m_{\textsf{o}}\rangle$. Then
$$
0\longrightarrow\Hom_{\cA_{[\ba]}}(M_{(13)(23)},\ku_e)\overset{\partial_0}{\longrightarrow}\Hom_{\cA_{[\ba]}}(M_e\oplus M_{(12)},\ku_e)
\overset{\partial_1}{\longrightarrow} ...
$$
and $\Ext_{\cA_{[\ba]}}^1(L,\ku_e)=\ker\partial_1/\operatorname{Im}\partial_0$. Since $M_h$ is generated by $m_1\in M_h[h]$ for all $h\in\Sn_3$, $\Hom_{\cA_{[\ba]}}(M_{(13)(23)},\ku_e)=0$ and $\dim\Hom_{\cA_{[\ba]}}(M_e\oplus M_{(12)},\ku_e)=1$. By Lemma \ref{le: ext of simple modules a sub generic}, we know that there exists a non-trivial extension of $L$ by $\ku_e$ and therefore $\dim\Ext_{\cA_{[\ba]}}^1(L,\ku_e)=1$ because it is non-zero. For other $S$ and $T$ and for the case $\ba=(0,0,0)$, the proof is similar.

\smallbreak
Hence if $\ba\in\cA_{[\ba]}$ is sub-generic and $\aij{12}\neq\aij{13}=\aij{23}$, the separated quiver of $\cA_{[\ba]}$ is
$$
\xymatrix{
\ku_e\ar@{->}[d]\ar@{->}[r] & \ku_{(12)}' & L  \ar@//[d] \ar@//[l]
\\
L'& \ku_{(12)}  \ar@{->}[l]\ar@{->}[r]& \ku_e';
}
$$
and the separated quiver of $\cA_{[(0,0,0)]}$ is
$$
\xymatrix{
&\ku_e\ar@{->}[d]\ar@{->}[dr]\ar@{->}[dl]& &
&\ku_{(12)}\ar@{->}[d]\ar@{->}[dr]\ar@{->}[dl]&
\\
\ku_{(12)}'& \ku_{(13)}'& \ku_{(23)}' &
\ku_{e}'& \ku_{(13)(23)}'& \ku_{(23)(13)}'
\\
\ku_{(13)(23)}\ar@{->}[u]\ar@{->}[ur]\ar@{->}[urr]& & \ku_{(23)(13)}\ar@{->}[u]\ar@{->}[ul]\ar@{->}[ull]&
\ku_{(13)}\ar@{->}[u]\ar@{->}[ur]\ar@{->}[urr]& & \ku_{(23)}.\ar@{->}[u]\ar@{->}[ul]\ar@{->}[ull]
}
$$
Therefore the lemma follows from Theorem \ref{thm: rep type ARS} and Remark \ref{obs: for apply thm rep type ARS}.
\end{proof}

\begin{Rem}
Let $\ba\in\gA_3$ be generic. It is not difficult to prove that the separated quiver of $\cA_{[\ba]}$ is
\begin{align*}
&\xymatrix{
\ku_e\ar@{->}[r]\ar@{->}@<-1ex>[r]  & L'} & &\xymatrix{ L\ar@{->}[r]\ar@{->}@<-1ex>[r] & \ku_e'.}
\end{align*}
\end{Rem}

\section{On the structure of $\cA_{[\ba]}$}\label{sect:more-info}

In this section, we assume that $n=3$ as in the preceding one.

\subsection{Cocycle deformations}

\

We show in this subsection that the algebras $\cA_{[\ba]}$ are cocycle deformation of each other.
For this, we first recall the following theorem due to Masuoka.

\smallbreak
If $K$ is a Hopf subalgebra of a Hopf algebra $H$ and $J$ is a Hopf ideal of $K$, then the two-sided ideal $(J)$ of $H$ is in fact a Hopf ideal of $H$.

\begin{thm}\label{thm:cocycle}\cite[Thm. 2]{masuoka}, \cite[Thm. 3.4]{bitidascarainu}.
Suppose that $K$ is Hopf subalgebra of a Hopf algebra $H$. Let $I,J$ be Hopf ideal of $K$. If there is an algebra map $\psi$ from $K$ to $\ku$ such that
\begin{itemize}
 \item $J=\psi\rightharpoonup I\leftharpoonup\psi^{-1}$ and
 \item $H/(\psi\rightharpoonup I)$ is nonzero,
\end{itemize}
then $H/(\psi\rightharpoonup I)$ is
a $(H/(I),H/(J))$-biGalois object and so the quotient Hopf algebras $H/(I)$, $H/(J)$
are monoidally Morita-Takeuchi equivalent. If $H/(I)$ and $H/(J)$ are finite dimensional,
then $H/(I)$ and $H/(J)$  are cocycle deformations of each other. {$\qed$}
\end{thm}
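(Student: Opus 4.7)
The plan is to construct explicitly a biGalois object connecting the two Hopf quotients and then invoke the standard equivalence (Schauenburg) between biGalois objects and cocycle deformations in the finite-dimensional setting. First I would set up the basic twisting data: since $\psi\colon K\to\ku$ is an algebra map, it is convolution-invertible with $\psi^{-1}=\psi\circ \mathcal{S}_K$, and the formulas $\psi\rightharpoonup k=\psi(k\_{1})k\_{2}$ and $k\leftharpoonup\psi^{-1}=k\_{1}\psi^{-1}(k\_{2})$ define a bimodule action on $K$. I would first record the elementary facts that $\psi\rightharpoonup I$ is a coideal of $K$ (the comultiplication and counit computations are the routine ones that always accompany cocycle twists) and that, because $I\subset K$ is a Hopf ideal, the two-sided ideal $(\psi\rightharpoonup I)$ generated inside $H$ is a coideal of $H$; this is what makes the quotient $A:=H/(\psi\rightharpoonup I)$ a candidate for a biGalois object.

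Next I would equip $A$ with its two comodule-algebra structures. The right $H/(I)$-coaction should descend from $\Delta_H$: concretely, since $\psi\rightharpoonup I$ already encodes the ``right-hand'' part of $I$ up to the coboundary by $\psi$, the image of $\Delta_H(\psi\rightharpoonup I)$ in $H\otimes H/(I)$ vanishes, giving $\rho_r\colon A\to A\otimes H/(I)$. For the left $H/(J)$-coaction I would use the conjugation hypothesis: because $J=\psi\rightharpoonup I\leftharpoonup\psi^{-1}$ in $K$, one checks that the image of $\Delta_H(\psi\rightharpoonup I)$ in $H/(J)\otimes H$ also vanishes, producing $\rho_\ell\colon A\to H/(J)\otimes A$. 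The commutation of the two coactions and compatibility with the algebra structure on $A$ are then formal, following directly from coassociativity in $H$.

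The technical core of the argument is the bijectivity of the canonical map
\[
\can\colon A\otimes_{H/(I)} A \longrightarrow H/(J)\otimes A,\qquad a\otimes b\longmapsto \rho_\ell(a)(1\otimes b),
\]
(and its right-handed analogue). Here I would exhibit an explicit inverse built from $\psi^{-1}$ and the antipode of $H$, modelled on the classical formula for the inverse of the Galois map of a Hopf Galois extension: roughly, $\bar{h}\otimes\bar{k}\mapsto \bar{h\_1\mathcal{S}(h\_2)\psi^{-1}(\cdot)}\otimes \bar{h\_3 k}$, suitably interpreted modulo $(\psi\rightharpoonup I)$. Verifying that this map is well-defined modulo the ideals and that it is two-sided inverse to $\can$ is where the hypotheses $J=\psi\rightharpoonup I\leftharpoonup\psi^{-1}$ and the nonvanishing of $H/(\psi\rightharpoonup I)$ are essentially used. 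This is the step I expect to be the main obstacle, because one must track how conjugation by $\psi$ interacts with multiplication in $H$ outside of $K$, and the faithfulness (flatness) of $H$ over $K$ — provided in the finite-dimensional case by the Nichols--Zoeller theorem — enters here to promote injectivity from the generators to the full ideal.

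Finally, with $A$ established as an $(H/(I),H/(J))$-biGalois object, in the finite-dimensional case Schauenburg's theorem converts this biGalois datum into a normalized 2-cocycle $\sigma\colon H/(I)\otimes H/(I)\to\ku$ such that $H/(J)\cong (H/(I))_\sigma$ as Hopf algebras, yielding the desired cocycle deformation. The monoidal Morita--Takeuchi equivalence in the general (possibly infinite-dimensional) statement is a direct consequence of the existence of the biGalois object, without needing the cocycle description.
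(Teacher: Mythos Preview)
The paper does not prove this theorem: it is quoted from the literature (Masuoka \cite{masuoka} and \cite{bitidascarainu}) and closed with a $\qed$ immediately after the statement, so there is no in-paper argument to compare your proposal against.

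That said, your outline follows the architecture of Masuoka's original proof and is sound in spirit, but a couple of formulations would not survive as written. A biGalois object is not a coalgebra, so you do not want $(\psi\rightharpoonup I)$ to be a coideal of $H$; what is actually needed is the asymmetric inclusion $\Delta_H\big((\psi\rightharpoonup I)\big)\subseteq (\psi\rightharpoonup I)\otimes H + H\otimes (I)$ (and its companion with $(J)$ on the left), which is exactly what produces the right $H/(I)$- and left $H/(J)$-coactions. Your phrase ``the image of $\Delta_H(\psi\rightharpoonup I)$ in $H\otimes H/(I)$ vanishes'' is too strong and should be replaced by this. Likewise, for a Galois \emph{object} the coinvariants are $\ku$, so the canonical maps are $A\otimes A\to A\otimes H/(I)$ and $A\otimes A\to H/(J)\otimes A$, not $A\otimes_{H/(I)}A\to\cdots$; their inverses come from the translation map of $H$ rather than the formula you sketch. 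With these corrections your plan matches the standard proof in the cited references.
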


We will need the following lemma to apply the Masuoka's theorem.

\begin{lema}\label{le:tensor subalgebra}
If $W$ is a vector space and $U$ is a vector subspace of $W^{\ot n}$, then the subalgebra of $T(W)$ generated by $U$ is isomorphic to $T(U)$.
\end{lema}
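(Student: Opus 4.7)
The plan is to construct the natural algebra morphism $\Phi: T(U) \to T(W)$ extending the inclusion $U\hookrightarrow W^{\otimes n}\subset T(W)$ and show it is injective; its image will then automatically be the subalgebra generated by $U$.

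First I would invoke the universal property of the tensor algebra: the linear map $U\hookrightarrow W^{\otimes n}\subset T(W)$ extends uniquely to an algebra morphism $\Phi:T(U)\rightarrow T(W)$. By construction, $\Phi(T(U))$ is precisely the subalgebra of $T(W)$ generated by $U$, so the content of the lemma is injectivity of $\Phi$.

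For injectivity, I would exploit the natural $\N$-grading on $T(W)=\bigoplus_{m\geq 0} W^{\otimes m}$. Since $U\subset W^{\otimes n}$, the map $\Phi$ carries the homogeneous component $U^{\otimes k}\subset T(U)$ into $W^{\otimes kn}$. As the subspaces $W^{\otimes kn}$ for distinct values of $k$ lie in distinct graded components of $T(W)$, their sum is direct; hence it suffices to prove that the restriction $\Phi_k := \Phi|_{U^{\otimes k}}: U^{\otimes k}\rightarrow W^{\otimes kn}$ is injective for every $k\geq 0$.

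Now $\Phi_k$ is nothing but the natural map induced by the inclusion $U\hookrightarrow W^{\otimes n}$ upon applying the $k$-fold tensor product. Since we work over a field, tensor product of vector spaces preserves injections (equivalently, pick a linear complement $U'$ so that $W^{\otimes n}=U\oplus U'$ and observe that $(W^{\otimes n})^{\otimes k}$ decomposes as a direct sum whose summands include $U^{\otimes k}$), so $\Phi_k$ is injective. This completes the argument. The only point requiring care is the bookkeeping that different $k$ land in different graded components of $T(W)$, which I expect to be the main (though mild) obstacle; once this is in place, the proof reduces to the standard fact that tensor products preserve monomorphisms of vector spaces.
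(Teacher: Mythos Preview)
Your proof is correct and follows essentially the same strategy as the paper: use the $\N$-grading of $T(W)$ to separate the images of the $U^{\otimes k}$ into distinct homogeneous components, then verify injectivity degree by degree. The paper first reduces to the case $U=W^{\otimes n}$ and argues with an explicit monomial basis, whereas you work directly with $U$ and invoke the fact that tensor products over a field preserve injections; these are cosmetic differences on the same underlying idea.
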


\begin{proof}
It is enough to prove the lemma for $U=W^{\ot n}$. Fix $n$ and let $(x_i)_{i\in I}$ be a basis of $W$.
Then $\mathbf{B}=\{X_{\mathbf{i}}=x_{i_1} \cdots x_{i_n}: \mathbf{i}=(i_1, ..., i_n)\in I^{\times n} \}$ forms a basis of $W^{\ot n}$. Since the $X_{\mathbf{i}}$'s are all homogeneous
elements of the same degree in $T(W)$, we only have to prove that $\{X_{\mathbf{i}_1}\cdots X_{\mathbf{i}_m}:\mathbf{i}_1, ..., \mathbf{i}_m\in I^{\times n}\}$ is linearly independent
in $T(W)$ for all $m\geq1$ and this is true because $\mathbf{B}$ is a basis of monomials of the same degree.
\end{proof}

Here is the announced result. Observe that this gives an alternative proof to the fact that $\dim\cA_{[\ba]} = 72$,
proved in \cite{AV} using the Diamond Lemma.

\begin{prop}\label{prop:cocycle deformations}
For all $\ba\in\gA_3$, $\cA_{[\ba]}$ is a Hopf algebra monoidally Morita-Takeuchi equivalent to $\toba(V_3)\#\ku^{\Sn_3}$.
\end{prop}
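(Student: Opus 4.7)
The strategy is to apply Masuoka's Theorem~\ref{thm:cocycle} with $H = T(V_3)\#\ku^{\Sn_3}$. The algebra $\toba(V_3)\#\ku^{\Sn_3}$ corresponds to the parameter $\ba = 0$, so the plan is to exhibit a Hopf subalgebra $K\subset H$, Hopf ideals $I,J$ of $K$ with $H/(I)\simeq\toba(V_3)\#\ku^{\Sn_3}$ and $H/(J)\simeq\cA_{[\ba]}$, and a character $\psi:K\to\ku$ such that $J=\psi\rightharpoonup I\leftharpoonup\psi^{-1}$.

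First I would construct $K$. Let $\tilde L\subset V_3^{\otimes 2}$ be the $\Sn_3$-stable subspace spanned by the quadratic Nichols relations $x_{(ij)}^2$, $R_{(ij)(kl)}$ and $R_{(ij)(ik)}$. By Lemma~\ref{le:tensor subalgebra}, the subalgebra of $T(V_3)$ generated by $\tilde L$ is isomorphic to $T(\tilde L)$, so the subalgebra of $H$ generated by $\tilde L$ and $\ku^{\Sn_3}$ is $K\simeq T(\tilde L)\#\ku^{\Sn_3}$. The crucial check is that $K$ is a Hopf subalgebra: for each generator $r\in\tilde L$, one must verify that the cross-terms in $\Delta(r)$, coming from the commutation rule $\delta_h x_{(ij)}=x_{(ij)}\delta_{(ij)h}$, cancel out, leaving a clean formula
\[
\Delta(r) = r\otimes 1 + \sum_{h\in \Sn_3}\delta_h\otimes r^h,\qquad r^h\in\tilde L.
\]
This is stated in Remark~\ref{obs:Aa-Hopf} for $r = x_{(ij)}^2$ and is verified by entirely analogous sign-cancellations for the two $R$-relations. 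Then $\Delta(K)\subset K\otimes K$, and using the antipode axiom one can show $S(x_{(ij)}^2)=-\sum_g x_{g(ij)g^{-1}}^2\delta_g\in K$ and similarly for the $R$'s, giving $S(K)\subset K$. The ideals $I = (x_{(ij)}^2,\, R_{(ij)(kl)},\, R_{(ij)(ik)})$ and $J = (x_{(ij)}^2 - f_{ij},\, R_{(ij)(kl)},\, R_{(ij)(ik)})$ of $K$ are Hopf ideals by the same coproduct formulas together with the ones of Remark~\ref{obs:Aa-Hopf}.

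Next I would define $\psi:K\to\ku$ by $\psi|_{\ku^{\Sn_3}} = \epsilon$, $\psi(x_{(ij)}^2) = -\aij{ij}$, and $\psi(x_{(ij)}x_{(kl)})=0$ for $(ij)\neq (kl)$, extended multiplicatively to $T(\tilde L)$. Well-definedness as a character of the smash product boils down to the identities $\psi(\delta_h\cdot b) = \epsilon(\delta_h)\psi(b)$ on generators $b\in\tilde L$, which hold because $\delta_h\cdot x_{(ij)}^2 = \delta_{h,e}x_{(ij)}^2$ and $\delta_h\cdot(x_{(ij)}x_{(kl)})=\delta_{h,(ij)(kl)}x_{(ij)}x_{(kl)}$ with $(ij)(kl)\neq e$. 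Using $\psi^{-1}=\psi\circ\mathcal S$ one computes $\psi^{-1}(x_{(kl)}^2)=\aij{kl}$ and $\psi^{-1}$ vanishes on the $R$'s. Applying the second coproduct
\[
\Delta^{(2)}(y) = y\otimes 1\otimes 1 + \sum_h\delta_h\otimes y^h\otimes 1 + \sum_{s,t}\delta_s\otimes\delta_t\otimes y^{st}
\]
and the rule $\psi\rightharpoonup y\leftharpoonup\psi^{-1} = \psi(y_{(1)})y_{(2)}\psi^{-1}(y_{(3)})$ gives
\[
\psi\rightharpoonup x_{(ij)}^2\leftharpoonup\psi^{-1} = x_{(ij)}^2 + \sum_t(-\aij{ij}+a_{t^{-1}(ij)t})\delta_t = x_{(ij)}^2 - f_{ij},
\]
and analogously $\psi\rightharpoonup R\leftharpoonup\psi^{-1} = R$ for the two remaining families (since $\psi$ and $\psi^{-1}$ vanish on all their conjugates). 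Therefore $J = \psi\rightharpoonup I\leftharpoonup\psi^{-1}$.

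Finally, $\cA_{[\ba]} = H/(J)$ is nonzero because its natural quotient onto $\ku^{\Sn_3}$ survives. Masuoka's Theorem~\ref{thm:cocycle} then yields that $\cA_{[\ba]}$ and $\toba(V_3)\#\ku^{\Sn_3}$ are monoidally Morita--Takeuchi equivalent; since the latter has dimension $72$, MT-equivalence forces $\dim \cA_{[\ba]} = 72$ as well, giving both the stated equivalence and the alternative proof of the dimension. The main obstacles are verifying the cross-term cancellations in $\Delta(R_{(ij)(kl)})$ and $\Delta(R_{(ij)(ik)})$ (which rely on the identities $(kl)(ij)(kl)=(ij)$ when disjoint, and $(ik)(ij)(ik)=(jk)$, etc.) and carrying out the bookkeeping in the $\psi\rightharpoonup(-)\leftharpoonup\psi^{-1}$ computation inside the smash product.
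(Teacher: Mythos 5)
Your overall strategy is the same as the paper's: realize both Hopf algebras as quotients of $H=T(V_3)\#\ku^{\Sn_3}$ by Hopf ideals $(I)$ and $(J)$ generated in the Hopf subalgebra $K\#\ku^{\Sn_3}$, where $K$ is the subalgebra of $T(V_3)$ generated by the quadratic relations, and then apply Masuoka's Theorem via a character $\psi$ with $\psi(x_{(ij)}^2)=-\aij{ij}$. (Minor remark: for $n=3$ there are no relations $R_{(ij)(kl)}$ with four distinct indices, so your $\tilde L$ is exactly the paper's $W = \langle R_{(13)(23)}, R_{(23)(13)}, x_{(ij)}^2\rangle$.) The paper gets the Hopf subalgebra property of $K$ for free by noting that $W$ consists of primitive elements of $T(V_3)$, which is cleaner than the explicit coproduct bookkeeping you propose, but your route is not wrong. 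Your computation $\psi\rightharpoonup x_{(ij)}^2\leftharpoonup\psi^{-1}=x_{(ij)}^2-f_{ij}$ and the vanishing of $\psi$ on the $R$'s are correct.

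The genuine gap is the non-vanishing check. Masuoka's theorem (as stated in Theorem~\ref{thm:cocycle}) requires that $H/(\psi\rightharpoonup I)$ be nonzero; this is the intermediate biGalois object, \emph{not} the target $H/(J)=\cA_{[\ba]}$. With your $\psi$, the ideal $\psi\rightharpoonup I$ contains the elements $\psi\rightharpoonup x_{(ij)}^2=x_{(ij)}^2-\aij{ij}$, so the quotient $H/(\psi\rightharpoonup I)$ is the algebra $\mathcal{K}_{\ba}$ defined by relations $x_{(ij)}^2=\aij{ij}$ (rather than $x_{(ij)}^2=f_{ij}$), together with the two braid relations. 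You must show that \emph{this} algebra is nonzero. Moreover, the reason you give even for $\cA_{[\ba]}\neq 0$ is incorrect: there is no projection $\cA_{[\ba]}\twoheadrightarrow\ku^{\Sn_3}$ sending $x_{(ij)}\mapsto 0$ unless $\ba=0$, because the relation $x_{(ij)}^2=f_{ij}$ would then force $f_{ij}=0$; and likewise for $\mathcal{K}_{\ba}$ the relation $x_{(ij)}^2=\aij{ij}$ is incompatible with $x_{(ij)}\mapsto 0$ for $\ba\neq 0$. This is exactly why the paper's proof constructs an explicit $\mathcal{K}_{\ba}$-module $M_3$ (the regular $\ku^{\Sn_3}$-module with a carefully chosen action of the $x_{(ij)}$'s) and verifies the defining relations of $\mathcal{K}_{\ba}$ on it directly. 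Without such a nontrivial module (or some other independent argument), the hypothesis of the theorem is unverified. Note also that the nonvanishing is the entire content of the ``alternative proof'' that $\dim\cA_{[\ba]}=72$, which the paper emphasizes; so this step cannot be waved away by appealing to facts already proved in \cite{AV}, or the alternative proof becomes circular.
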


\begin{proof} To start with, we consider the algebra $\mathcal{K}_{\ba} := T(V_3)\#\ku^{\Sn_3}/\cJ_{\ba}$, $\ba\in\gA_3$,
where $\cJ_{\ba}$ is the ideal generated by
\begin{align}
\label{eq:rels-powers Ka}R_{(13)(23)},\quad R_{(23)(13)}\quad\mbox{ and }\quad \xij{ij}^2+\sum_{g\in\Sn_3}a_{g^{-1}(ij)g}\,\delta_g,\quad (ij)\in\cO_2^3.
\end{align}

Let $M_3=\ku^{\Sn_3}$ with the regular representation. For all $\ba\in\gA_3$, $M_3$ is an $\mathcal{K}_{\ba}$-module with action given by
\begin{align*}
\quad\xij{ij}\cdot m_g
                                                             =\begin{cases}
                                                             m_{(ij)g} & \mbox{ if }\sgn g=-1,\\
                                                             -a_{g^{-1}(ij)g}\,m_{(ij)g} & \mbox{ if }\sgn g=1.\\
                                                             \end{cases}
\end{align*}
We have to check that the relations defining $\mathcal{K}_{\ba}$ hold in the action. Then
\begin{align*}
\delta_h(\xij{ij}\cdot m_g)&=\delta_h(\lambda_g m_{(ij)g})=\lambda_g\delta_h((ij)g) m_{(ij)g}=\lambda_g\delta_{(ij)h}(g)m_{(ij)g}\\
&=\xij{ij}\cdot(\delta_{(ij)h}\cdot m_{g})
\end{align*}
with $\lambda_g\in\ku$ according to the definition of the action. Note that
$$
\xij{ij}\cdot(\xij{ik}\cdot m_g)
=\begin{cases}
-a_{g^{-1}(ik)(ij)(ik)g}\,m_{(ij)(ik)g} & \mbox{ if }\sgn g=-1,\\
-a_{g^{-1}(ik)g}\,m_{(ij)(ik)g} & \mbox{ if }\sgn g=1.\\
\end{cases}
$$
In any case, we have that $\xij{ij}^2\cdot m_g=-a_{g^{-1}(ij)g}\,m_g$ and
$$
R_{(ij)(ik)}\cdot m_{g}=-(\sum_{(st)\in\cO_2^3}a_{g^{-1}(st)g})m_{(ij)(ik)g}=0.
$$
Let $W=\langle R_{(13)(23)}, \, R_{(23)(13)},\, \xij{ij}^2: (ij)\in\cO_2^3\rangle$ and $K$ be the subalgebra of $T(V_3)$
generated by $W$; $K$ is a braided Hopf subalgebra because $W$ is a Yetter-Drinfeld submodule contained in $\mathcal{P}(T(V_3))$ the primitive elements of $T(V_3)$.
Then $K\#\ku^{\Sn_3}$ is a Hopf subalgebra of $T(V_3)\#\ku^{\Sn_3}$. For each $\ba\in\gA_3$,  by Lemma \ref{le:tensor subalgebra} we can define the algebra morphism $\psi=\psi_K\otimes\epsilon:K\#\ku^{\Sn_3}\rightarrow\ku$ where
$$
\psi_{K|W[g]}=0\,\mbox{ if }\, g\neq e\,\mbox{ and }\, \psi_K(\xij{ij}^2)=-\aij{ij}\,\forall(ij)\in\cO_2^3.
$$

If $J$ denotes the ideal of $K\#\ku^{\Sn_3}$ generated by the generator of $K$, then $\psi^{-1}\rightharpoonup J\leftharpoonup\psi$ is the ideal generated by the generators of $\cI_\ba$. In fact, $\psi^{-1}=\psi\circ\mathcal{S}$ is the inverse element of $\psi$ in the convolution
group $\Alg(K\#\ku^{\Sn_3},\ku)$, $\mathcal{S}(W)[g]\subset(K\#\ku^{\Sn_3})[g^{-1}]$ and $\mathcal{S}(\xij{ij}^2)=-\sum_{h\in\Sn_3}\delta_{h^{-1}} x_{h^{-1}(ij)h}^2$. Then our claim follows if we apply $\psi\ot\id\ot\psi^{-1}$ to $(\Delta\ot\id)\Delta(\xij{ij}^2)=$
$$
=\xij{ij}^2\ot1\ot1+\sum_{h\in\Sn_3}\delta_{h}\ot x_{h^{-1}(ij)h}^2\ot1
+\sum_{h,g\in\Sn_3}\delta_{h}\ot\delta_{g}\ot x_{g^{-1}h^{-1}(ij)hg}^2
$$
and $(\Delta\ot\id)\Delta(x)=x\ot 1\ot1+x_{-1}\ot x_0\ot1+x_{-2}\ot x_{-1}\ot x_0$ for $g\neq e$ and $x\in W[g]$; note that also $x_0\in W[g]$.

The ideal $\psi^{-1}\rightharpoonup J$ is generated by
$$
R_{(13)(23)},\quad R_{(23)(13)}\quad\mbox{ and }\quad\xij{ij}^2+\sum_{g\in\Sn_3}a_{g^{-1}(ij)g}\delta_g\quad \forall(ij)\in\cO_2^3.
$$

Now $\mathcal{K}_\ba = T(V_3)\#\ku^{\Sn_3}/\langle \psi^{-1}\rightharpoonup J\rangle \neq 0$ because it has a non-zero quotient in $\End (M_3)$.
Hence $\cA_{[\ba]}$ is monoidally Morita-Takeuchi equivalent to $\toba(V_3)\#\ku^{\Sn_3}$, by Theorem \ref{thm:cocycle}.
\end{proof}

\subsection{Hopf subalgebras and integrals of $\cA_{[\ba]}$}

\

We collect some information about $\cA_{[\ba]}$. Let
$$\chi=\sum_{g\in\Sn_3}\sgn(g)\delta_g, \quad y = \sum_{(ij)\in\cO_2^3}\xij{ij}.$$ It is easy to see that $\chi$ is a group-like element and that
$y\in \mathcal{P}_{1,\chi}(\cA_{[\ba]})$.

\begin{prop}\label{lema:the unique one dimensional submodule}
Let $\ba\in\gA_3$. Then
\renewcommand{\theenumi}{\alph{enumi}}   \renewcommand{\labelenumi}{(\theenumi)}
\begin{enumerate}
\item\label{item:lema20-a} $G(\cA_{[\ba]})=\{1, \chi\}$.

\smallbreak
\item\label{item:lema20-b} $\mathcal{P}_{1,\chi}(\cA_{[\ba]})=\langle 1 - \chi, y\rangle$.

\smallbreak
\item\label{item:lema20-d} $\ku\langle\chi, y\rangle$ is isomorphic to the 4-dimensional Sweedler Hopf algebra.

\smallbreak
\item\label{item:lema20-i}
The Hopf subalgebras of $\cA_{[\ba]}$ are $\ku^{\Sn_3}$, $\ku\langle \chi\rangle$ and $\ku\langle\chi, y\rangle$.

\smallbreak
\item\label{item:lema20-c} $\mathcal{S}^2(a)=\chi a\chi^{-1}$ for all $a\in\cA_{[\ba]}$.

\smallbreak
\item\label{item:lema20-h} The space of left integrals is $\langle\mcuatro\delta_e\rangle$; $\cA_{[\ba]}$ is unimodular.

\smallbreak
\item\label{item:lema20-g} $(\cA_{[\ba]})^*$ is unimodular.

\smallbreak
\item\label{item:lema20-j} $\cA_{[\ba]}$ is not a quasitriangular Hopf algebra.
\end{enumerate}
\end{prop}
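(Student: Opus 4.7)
The plan is to handle the eight items in the order (a), (b), (d), (c), (i), (h), (g), (j), with each part leveraging its predecessors; the main obstacle will be (j).

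For (a), group-likes of $\cA_{[\ba]}$ lie in the coradical $\ku^{\Sn_3}$, whose group-likes are $\widehat{\Sn_3}=\{1,\chi\}$. For (b), the $(1,\chi)$-skew-primitives live in the first piece $\cA_1$ of the coradical filtration, which agrees with the one of $\gr\cA_{[\ba]}\simeq\toba(V_3)\#\ku^{\Sn_3}$; summing the coaction formula over $(ij)\in\Oc_2^3$ and using that conjugation permutes transpositions gives $\Delta(y)=y\otimes 1+\chi\otimes y$, while writing a general element of $\cA_1$ in the basis $\{\delta_h,\,\xij{ij}\delta_h\}$ and imposing the skew-primitive equation forces it into $\langle 1-\chi,\,y\rangle$. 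Item (d) follows from three identities in $\cA_{[\ba]}$: $\chi^2=1$; $\chi y=-y\chi$ via $\delta_h\xij{ij}=\xij{ij}\delta_{(ij)h}$; and $y^2=\sum_{(ij)}f_{(ij)}+R_{(12)(13)}+R_{(13)(12)}=0$, where the first sum vanishes because $\sum_{(ij)}a_{(ij)}=0$ and the two $R$'s vanish in $\cA_{[\ba]}$. For (c), both $\mathcal{S}^2$ and $\mathrm{Ad}\,\chi$ are algebra automorphisms, so it suffices to check them on the generators $\delta_g$ (trivial) and $\xij{ij}$: computing $\mathcal{S}(\xij{ij})$ from $\mu(\mathcal{S}\otimes\id)\Delta(\xij{ij})=0$ and iterating produces $\mathcal{S}^2(\xij{ij})=-\xij{ij}=\chi\xij{ij}\chi^{-1}$. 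For (i), a Hopf subalgebra $K$ has coradical $K\cap\ku^{\Sn_3}\in\{\ku,\,\ku\langle\chi\rangle,\,\ku^{\Sn_3}\}$; the first gives $K=\ku$, the second confines $K\subseteq\ku\langle\chi,y\rangle$ by (b) and leaves only $\ku\langle\chi\rangle$ or $\ku\langle\chi,y\rangle$ via Nichols-Zoeller, and the third gives $\ku^{\Sn_3}$ or $\cA_{[\ba]}$ using that $V_3$ is a simple Yetter-Drinfeld module over $\ku^{\Sn_3}$.

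For (h), I claim $\Lambda:=\mcuatro\delta_e$ is a left integral. Iterating $\delta_h\xij{ij}=\xij{ij}\delta_{(ij)h}$ and using that the word $(12)(23)(12)(13)$ multiplies to $e$ gives $\delta_h\mcuatro=\mcuatro\delta_h$, so $\delta_h\Lambda=\epsilon(\delta_h)\Lambda$; under the isomorphism $M_e\simeq\cA_{[\ba]}\delta_e$, $m_x\leftrightarrow x\delta_e$, the identities $\xij{ij}\Lambda=0$ translate to $\xij{ij}\cdot\mcuatro=0$ in $M_e$ at $g=e$, which follow from \eqref{eq:mcuatro-a}--\eqref{eq:mcuatro-c} since $f_{(kl)}(e)=0$ for every transposition and the scalar coefficient of $\mij{12}$ in \eqref{eq:mcuatro-c} at $g=e$ also vanishes upon substituting the values in \eqref{eq:fij-3}. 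An analogous right-sided calculation gives $\Lambda\xij{ij}=0$ and hence unimodularity. For (g), Radford's formula combined with (c) and the unimodularity just proven (so that the modular character $\alpha=\epsilon$) reduces $\mathcal{S}^4=\id$ to $a=gag^{-1}$ for the distinguished grouplike $g\in G(\cA_{[\ba]})=\{1,\chi\}$; since $\chi$ is not central, $g=1$.

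For (j), suppose $R\in\cA_{[\ba]}^{\otimes 2}$ is a universal R-matrix with minimal quasitriangular sub-Hopf-algebra $H_R$. By (i), $H_R\in\{\ku,\,\ku\langle\chi\rangle,\,\ku\langle\chi,y\rangle,\,\ku^{\Sn_3},\,\cA_{[\ba]}\}$. The cases $H_R\subseteq\ku\langle\chi\rangle$ force $R$ to sit inside $\ku\langle\chi\rangle^{\otimes 2}$, hence to be central in the commutative $\ku^{\Sn_3\otimes 2}$; then the axiom $R\Delta(a)=\Delta^{op}(a)R$ for $a\in\ku^{\Sn_3}$ forces $\Delta=\Delta^{op}$ on $\ku^{\Sn_3}$, which is absurd because $\Sn_3$ is non-abelian. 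Similarly, $H_R=\ku^{\Sn_3}$ is impossible, since a commutative Hopf algebra is quasitriangular only if cocommutative. The remaining cases $H_R=\ku\langle\chi,y\rangle$ and $H_R=\cA_{[\ba]}$ form the hard part: the plan is to project the R-matrix axiom onto the coradical-coradical piece $\ku^{\Sn_3\otimes 2}$ of the tensor-product coradical filtration, argue that the image of $R$ there is still invertible and central (using $(\epsilon\otimes\id)(R)=1$ and tracking how higher-filtration pieces multiply), and deduce the same contradiction $\Delta(\delta_{(12)})=\Delta^{op}(\delta_{(12)})$ on $\ku^{\Sn_3}$. Making this projection compatible with the R-matrix equation---which does not a priori respect the coradical filtration of $\cA_{[\ba]}\otimes\cA_{[\ba]}$---is the main technical obstacle.
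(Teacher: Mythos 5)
Your items (a), (b), (d), (c), (i), (h), (g) are essentially correct and mostly follow the same route as the paper. Minor remarks: in (h) you verify directly that $\mcuatro\delta_e$ is a two-sided integral (the paper instead deduces $h=e$ from the submodule lattices of the Verma modules computed in Section~3, obtaining unimodularity as a byproduct); the claim that ``an analogous right-sided calculation gives $\Lambda\xij{ij}=0$'' deserves to be written out, since it needs the action formulae on $M_{(ij)}$ rather than on $M_e$, but it is true. In (i) the step from $K_0=\ku^{\Sn_3}$ to $K\in\{\ku^{\Sn_3},\cA_{[\ba]}\}$ needs the observation that the infinitesimal braiding of $K$ is a Yetter--Drinfeld \emph{submodule} of $V_3$; you should say so, but the idea is sound.

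Item (j) is where there is a genuine gap. You correctly rule out $H_R\subseteq\ku\langle\chi\rangle$ and $H_R=\ku^{\Sn_3}$, but for $H_R=\ku\langle\chi,y\rangle$ and $H_R=\cA_{[\ba]}$ your proposed argument --- projecting the $R$-matrix axiom onto the $\ku^{\Sn_3}\otimes\ku^{\Sn_3}$ layer of the tensor square and hoping to keep invertibility and centrality --- is not carried out, and you yourself flag it as the ``main technical obstacle.'' Since the coradical filtration of $\cA_{[\ba]}\otimes\cA_{[\ba]}$ is not multiplicatively compatible with the relation $R\Delta(a)=\Delta^{\op}(a)R$ in the way your sketch would require, there is no reason this projection should preserve the relation, and the argument does not close. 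The paper sidesteps this entirely: by Radford's structure theorem a minimal quasitriangular Hopf algebra decomposes as $A_R=HB$ with a Hopf algebra isomorphism $H^{*\cop}\simeq B$ between two Hopf subalgebras $H,B$ of $\cA_{[\ba]}$. Combined with your list in (i), this immediately kills $A_R=\cA_{[\ba]}$, because the coradical of $(\cA_{[\ba]})^{*}$ is computed from the simple modules in Section~3 to be $\ku^6$, $\ku\oplus M(5,\ku)^*$, or $\ku^2\oplus M(4,\ku)^*$, none of which is $\ku^{\Sn_3}$; so $\cA_{[\ba]}\not\simeq(\cA_{[\ba]})^{*\cop}$. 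Finally, the remaining possibility $A_R\subseteq\ku\langle\chi,y\rangle$ is disposed of by plugging the known $R$-matrices of Sweedler's algebra into the axiom $\Delta^{\cop}(\delta_g)R=R\Delta(\delta_g)$ and reaching a contradiction with the non-cocommutativity of $\ku^{\Sn_3}$. You should replace your unfinished projection argument with this Radford-based one (or supply the missing details, which would require a substantially different idea).
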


\begin{proof}

We know that the coradical $(\cA_{[\ba]})_0$ of $\cA_{[\ba]}$ is isomorphic to $\ku^{\Sn_3}$ by \cite{AV}.
Since $G(\cA_{[\ba]}) \subset(\cA_{[\ba]})_0$, \eqref{item:lema20-a} follows.

\eqref{item:lema20-b} Recall that $V_3 = M((12),\sgn)\in{}^{\ku^{\Sn_3}}_{\ku^{\Sn_3}}\mathcal{YD}$, see Subsection \ref{subsect:nichols-gral-sn}. Then
$\mathcal{P}_{1,\chi}(\cA_{[\ba]}) / \langle 1 - \chi\rangle$ is isomorphic to the isotypic component of the comodule $V_3$ of type $\chi$.
 That is, if $z=\sum_{(ij)\in\cO_2^3}\lambda_{(ij)} \xij{ij} \in (V_3)_{\chi}$, then
 $$
 \delta (z) = \sum_{h\in G, (ij)\in\cO_2^3}\sgn(h) \lambda_{(ij)}\delta_{h}\ot x_{h^{-1}(ij)h} = \chi\otimes z.
 $$
 Evaluating at $g\otimes \id$ for any $g\in \Sn_3$, we see that $\lambda_{(ij)}=\lambda_{(12)}$ for all $(ij)\in\cO_2^n$. Then $z=\lambda_{(12)}y$.
The proof of \eqref{item:lema20-d} is now evident.

\eqref{item:lema20-i} Let $A$ be a Hopf subalgebra of $\cA_{[\ba]}$. Then $A_{0} = A\cap(\cA_{[\ba]})_0 \subseteq \ku^{\Sn_3}$ by \cite[Lemma 5.2.12]{mongomeri}.
Hence $A_0$ is either $\ku\langle \chi\rangle$ or else $\ku^{\Sn_3}$.
If $A_0=\ku\langle \chi\rangle$, then $A$ is a pointed Hopf algebra with group $\Z/2$.
Hence $A$ is either $\ku\langle \chi\rangle$ or else $\ku\langle\chi, y\rangle$
by \eqref{item:lema20-b} and \cite{N} or \cite{CD}\footnote{The classification of all \fd{} pointed Hopf algebras with group $\Z/2$ also follows easily performing the Lifting method \cite{AS-cambr}.}.
If $A_0 = \ku^{\Sn_3}$, then $A$ is either $\ku^{\Sn_3}$ or else $A = \cA_{[\ba]}$ by \cite{AV}.

To prove \eqref{item:lema20-c}, just note that $\chi\xij{ij}\chi^{-1}=-\xij{ij}$.

\eqref{item:lema20-h} follows from Subsections \ref{subsec: generic case} and \ref{subsec: non generic case}.
Let $\Lambda$ be a non-zero left integral of $\cA_{[\ba]}$. By Lemma \ref{prop:dim-uno}, the distinguished group-like element of $(\cA_{[\ba]})^*$ is $\zeta_h$ for some $h\in\Sn_3^{\ba}$, hence $\Lambda\delta_h=\zeta_h(\delta_h)\Lambda=\Lambda$. Let us consider $\cA_{[\ba]}$ as a left $\ku^{\Sn_3}$-module via the left adjoint action, see page \pageref{item:epimorphism}. Let $\Lambda_g\in(\cA_{[\ba]})[g]$ such that $\Lambda=\sum_{g\in\Sn_3}\Lambda_g$. Then $\Lambda=\delta_e\Lambda=\sum_{s,t\in\Sn_3}\ad\delta_s(\Lambda_{t})\delta_{s^{-1}}\delta_h=\Lambda_{h^{-1}}\delta_h$. Since $M_h\simeq\cA_{[\ba]}\delta_h$, we can use the lemmata of the Section \ref{sect:modules} to compute $\Lambda$.

If $\ba$ is generic, then $h=e$ by Theorem \ref{thm:simples in the generic case}. Since $\xij{ij}\Lambda=0$ for all $(ij)\in\Sn_3$, $\Lambda=\mcuatro\delta_e$ by Lemma \ref{le:submodules Me in the generic case}.

If $\ba$ is sub-generic, we assume that $\aij{12}\neq\aij{13}=\aij{23}$, then either $\Lambda=\Lambda_e\delta_e$ or $\Lambda_{(12)}\delta_{(12)}$ by Theorem \ref{thm:simples in the non generic case}. Since $\xij{ij}\Lambda=0$ for all $(ij)\in\Sn_3$, $\Lambda=\mcuatro\delta_e$ by Lemma \ref{le:submodules Me in the non generic case} and Lemma \ref{le:submodules M12 in the non generic case}.

\eqref{item:lema20-g} By \eqref{item:lema20-c}, $\mathcal{S}^4= \id$. By Radford's  formula for the antipode and \eqref{item:lema20-h},
the distinguished group-like element of $\cA_{[\ba]}$ is central, hence trivial. Therefore, $(\cA_{[\ba]})^*$ is unimodular.

\eqref{item:lema20-j} If there exists $R\in\cA_{[\ba]}\ot\cA_{[\ba]}$ such that $(\cA_{[\ba]}, R)$ is a quasitriangular Hopf algebra, then   $(\cA_{[\ba]}, R)$ has a unique minimal subquasitriangular Hopf algebra $(A_R, R)$ by \cite{radford}. We shall show that such a Hopf subalgebra does not exist using \eqref{item:lema20-i} and therefore $\cA_{[\ba]}$ is not a quasitriangular Hopf algebra.

\smallbreak

By \cite[Prop. 2, Thm. 1]{radford} we know that there exist Hopf subalgebras $H$ and $B$ of $\cA_{[\ba]}$ such that $A_R=HB$ and an isomorphism of Hopf algebras $H^{*\cop}\rightarrow B$. Then $A_R\neq\cA_{[\ba]}$. In fact, let $M(d,\ku)$ denote the matrix algebra over $\ku$ of dimension $d^2$. Then the coradical of $(\cA_{[\ba]})^{*}$ is isomorphic to
\begin{itemize}
 \item $\ku^6$ if $\ba=(0,0,0)$.
 \item $\ku\oplus M(5,\ku)^*$ if $\ba$ is generic by Theorem \ref{thm:simples in the generic case}.
 \item $\ku^2\oplus M(4,\ku)^*$ if $\ba$ is sub-generic by Theorem \ref{thm:simples in the non generic case}.
\end{itemize}
Since $(\cA_{[\ba]})_0\simeq\ku^{\Sn_3}$, $\cA_{[\ba]}$ is not isomorphic to $(\cA_{[\ba]})^{*\cop}$ for all $\ba\in\gA_3$.
Clearly, $A_R$ cannot be $\ku^{\Sn_3}$. Since $\cA_{[\ba]}$ is not cocommutative, $R$ cannot be $1\ot1$. The quasitriangular structures on $\ku\langle\chi\rangle$ and $\ku\langle\chi, y\rangle$ are well known, see for example \cite{radford}. Then it remains the case $A_R\subseteq\ku\langle\chi, y\rangle$ with $R=R_0+R_{\alpha}$ where
$R_0=\frac{1}{2}(1\ot1+1\ot\chi+\chi\ot1-\chi\ot\chi)$ and $R_{\alpha}=\frac{\alpha}{2}(y\ot y+y\ot\chi y+\chi y\ot\chi y-\chi y\ot y)$
for some $\alpha\in\ku$. Since $\Delta(\delta_g)^{\cop}R=R\Delta(\delta_g)$ for all $g\in\Sn_3$, then
\begin{align*}
\Delta(\delta_g)^{\cop}R_0&=R_0\Delta(\delta_g)=\Delta(\delta_g)R_0\quad\mbox{in $\ku^{\Sn_3}$;}
\end{align*}
but this is not possible because $R_0^2=1\ot1$ and $\ku^{\Sn_3}$ is not cocommutative.
\end{proof}

\subsection*{Acknowledgements}
Part of the work of C. V. was done as a fellow of the Erasmus Mundus programme of the EU in the University of Antwerp. He thanks to Prof. Fred Van Oystaeyen for his warm hospitality and help.

\end{document}